\documentclass[10pt]{amsart}
\usepackage{amsmath, amscd, amsthm} 
\usepackage{amssymb, amsfonts} 

\usepackage[all]{xy} 
\usepackage{color}
\subjclass[2010]{Primary 14F43, 14F42; Secondary 14C35, 19E15}
\setcounter{secnumdepth}{2}
\setcounter{tocdepth}{1}

\newcommand{\Q}{\mathbb{Q}} 
\newcommand{\C}{\mathbb{C}} 
\newcommand{\A}{\mathbb{A}}
\renewcommand{\P}{\mathbb{P}}
\newcommand{\Z}{\mathbb{Z}}
\newcommand{\R}{\mathbb{R}}
\newcommand{\G}{\mathbb{G}} 

\renewcommand{\L}{\mathbb{L}}
\newcommand{\N}{\mathbb{N}}

\renewcommand{\SS}{\mathbb{S}}

\newcommand{\QQ}{\mcal{Q}}

\newcommand{\II}{\mcal{I}}
\newcommand{\LL}{\mcal{L}}

\newcommand{\Qsst}{\mcal{Q}^{\mathrm{sst}}}

\newcommand{\nn}{\mathbf{n}}
\newcommand{\mm}{\mathbf{m}}

\renewcommand{\Re}{\mathrm{Re}}
\newcommand{\RRe}{\mathrm{Re}}

\newcommand{\iso}{\cong}
\newcommand{\wkeq}{\simeq}

\newcommand{\sst}{\mathrm{sst}}
\newcommand{\Sch}{\mathrm{Sch}}
\newcommand{\Sm}{\mathrm{Sm}}
\newcommand{\sch}{\mathrm{Sch}}
\newcommand{\sm}{\mathrm{Sm}}

\newcommand{\sSet}{\mathrm{sSet}}
\newcommand{\sset}{\mathrm{sSet}}
\newcommand{\ssetb}{\mathrm{sSet}_{\bullet}}
\newcommand{\btop}{\mathrm{Top}_{\bullet}}
\newcommand{\Top}{\text{Top}}
\newcommand{\ab}{\mathrm{Ab}}

\newcommand{\Th}{\mathrm{Th}}
\newcommand{\id}{\mathrm{id}}

\newcommand{\MGL}{\mathrm{MGL}}
\newcommand{\KGL}{\mathrm{KGL}}
\newcommand{\rKGL}{\mathcal{K}\mathrm{GL}}
\newcommand{\kgl}{\mathrm{kgl}}
\newcommand{\BGL}{\mathrm{BGL}}
\newcommand{\BGmp}{\mathrm{B}\mathbb{G}_{m\,+}}
\newcommand{\BGm}{\mathrm{B}\mathbb{G}_{m}}
\newcommand{\MZ}{\mathbf{M}\mathbb{Z}}
\newcommand{\ku}{ku}
\newcommand{\KU}{KU}
\newcommand{\Grass}{\mathrm{Grass}}

\newcommand{\free}{\mathrm{Free}^{\II}}

\newcommand{\mcal}[1]{\mathcal{#1}}

\DeclareMathOperator*{\colim}{\mathrm{colim}}

\DeclareMathOperator*{\hocolim}{\mathrm{hocolim}}

\DeclareMathOperator{\hofib}{\mathrm{hofib}}
\DeclareMathOperator{\cofiber}{\mathrm{cofib}}

\DeclareMathOperator{\diag}{\mathrm{diag}}

\DeclareMathOperator{\spec}{\mathrm{Spec}}

\DeclareMathOperator{\Mor}{Mor}

\DeclareMathOperator{\sk}{sk}
\DeclareMathOperator{\sing}{\mathrm{Sing}}
\DeclareMathOperator{\msing}{\mathcal{S}\mathrm{ing}}

\newcommand{\Lhk}{\mathrm{L}_{h\mathrm{K}}}

\newcommand{\spt}{\mathrm{Spt}}
\newcommand{\Spt}{\mathrm{Spt}}
\newcommand{\sspt}{\mathrm{Spt}^{\Sigma}}
\newcommand{\MMb}{\mathrm{Spc}_{\bullet}}
\newcommand{\hb}{\mathrm{H}_{\bullet}}

\newcommand{\SH}{\mathrm{SH}}
\newcommand{\DM}{\mathrm{DM}}

\newcommand{\ppsh}{\underline{\pi}}

\newcommand{\shift}{\mathrm{sh}}
\newcommand{\cof}{\mathrm{cof}}
\newcommand{\fib}{\mathrm{fib}}
\newcommand{\pt}{\mathrm{pt}}

\newcommand{\shom}{\underline{\mathrm{Hom}}}
\newcommand{\ihom}{\mathbf{hom}}


\numberwithin{equation}{section} 

\theoremstyle{plain}
\newtheorem{theorem}[equation]{Theorem}
\newtheorem*{theorem*}{Theorem}
\newtheorem{proposition}[equation]{Proposition}
\newtheorem{lemma}[equation]{Lemma}
\newtheorem{corollary}[equation]{Corollary}

\theoremstyle{definition}

\newtheorem{definition}[equation]{Definition}

\newtheorem{remark}[equation]{Remark}

\begin{document}
\title[Semi-topological strict ring spectra]{Motivic strict ring spectra representing semi-topological cohomology theories}
\author{Jeremiah Heller}
\email{heller@math.uni-wuppertal.de}
\address{Bergische Universit\"at Wuppertal, Gau{\ss}str. 20, D-42119 Wuppertal, Germany}
\thanks{The author received partial support from DFG grant HE6740/1-1}
\begin{abstract}
We show that Shipley's ``detection functor'' for symmetric spectra generalizes to motivic symmetric spectra. As an application, we construct motivic strict ring spectra  representing morphic cohomology, semi-topological $K$-theory, and semi-topological cobordism for complex varieties. As a further application to semi-topological cobordism, we show that it is related to semi-topological $K$-theory via a Conner-Floyd type isomorphism and that after inverting a lift of the Friedlander-Mazur $s$-element in morphic cohomology, semi-topological cobordism becomes isomorphic to periodic complex cobordism. 
\end{abstract}

\maketitle
\tableofcontents

\section{Introduction}
Motivic homotopy theory has been a very successful generalization of classical homotopy theory into the algebro-geometric setting and has shown itself to be the appropriate setting in which to  analyze, or even define, many interesting algebro-geometric invariants.
An understanding of a cohomology theory is intimately linked to an understanding of the object which represents it. Working in the modern setting of highly structured ring spectra yields representing objects which capture and reflect fine algebraic structure of the cohomology theory of interest. This has important calculational and theoretical ramifications and has been the source of much exciting development in homotopy theory in the past two decades.

The goal of this paper is twofold. In the second part of the paper, we study certain cohomology theories for complex varieties in the motivic stable homotopy category. In the course of their construction,  it is necessary to have a lax symmetric monoidal fibrant replacement functor if one is to obtain motivic strict ring spectra as representing objects. (The terminology ``strict ring spectra'' is used here to emphasize that we consider monoids in the category of motivic symmetric spectra and not merely after passage to the homotopy category). The usual fibrant replacement functors for motivic symmetric spectra are produced via the small object argument and so aren't suitable for this purpose. Thus the first part of the paper is devoted to importing a  construction of Shipley into the motivic setting, which then leads to  a lax symmetric monoidal fibrant replacement functor for motivic symmetric spectra. This is carried out in Section \ref{sec:fibrant}.

The two main examples of a semi-topological cohomology theory are morphic cohomology, introduced by 
Friedlander-Lawson in \cite{FL:algco} and  semi-topological $K$-theory introduced by Friedlander-Walker \cite{FW:sstfct}. In \cite{thesis} we introduced a semi-topological cobordism theory.  These cohomology theories form a rather interesting class of invariants for complex algebraic varieties, which are linked to hard and important problems in algebraic geometry. For example, Beilinson \cite{Beilinson} shows a certain conjecture of Suslin regarding the comparison between morphic cohomology and singular cohomology (analogous to the Beilinson-Lichtenbaum conjectures relating motivic cohomology and \'etale cohomology) implies Grothendieck's standard conjectures.

Friedlander-Walker originally define semi-topological $K$-theory of a normal complex variety $X$ in terms of the homotopy groups of the homotopy group completion of $\Mor(X,\Grass)$, the set of algebraic maps equipped with a natural topology. When $X$ is projective, this is the analytic topology associated to the set of complex points of a certain ind-scheme. This definition is intuitively appealing but hard to work with in practice. In order to further analyze these theories, Friedlander-Walker \cite{FW:compK} introduce a construction which they call the ``singular semi-topological complex'' for a presheaf $F$ on the category $\Sch/\C$ of schemes over $\C$. This construction is a model for topological realization and the value of $F^{\sst}(X)$ is the topological realization of the functor mapping space $\ihom(X,F)$. From this viewpoint, it is not very hard to see how to define representing spectra for the semi-topological cohomology theories. One simply takes any fibrant model for $E$ and defines $\QQ^{\sst}E$ to be the result of Friedlander-Walker's construction applied levelwise. The fibrant replacement is necessary as $\ihom(X,F)$  only has homotopical meaning when $F$ is fibrant. using the explicit fibrant replacement functor constructed in Section \ref{sec:fibrant}, this construction preserves strict ring spectra.  

In Section \ref{sec:sst} we formally define the motivic version $\Qsst$ of Friedlander-Walker's construction and record some first properties. Two key features of morphic cohomology and semi-topological $K$-theory are that they factor the topological realization map and that with finite coefficients they agree with the algebraic theory. These are completely general properties of $\Qsst$, as we verify in Theorem \ref{thm:sstdef}. Topological realization 
$\L\RRe_{\C}:\SH(\C)\to \SH$ has a right adjoint $\msing_{\C}$ and the unit of the adjunction factors as $\id \to \Qsst \to \R\msing_{\C}\L\RRe_{\C}$. This is rather formal. The second property that $\id\to \Qsst\wedge MA$ is an equivalence, where $MA$ is a Moore spectrum for a finite abelian group $A$, is a consequence of a Suslin rigidity theorem for motivic spectra.

To relate the construction $\Qsst$, defined on the level of motivic spectra, to Friedlander-Walker's construction, defined on the level of motivic spaces, it is necessary to recognize when $\Qsst$ produces an $\Omega_{\P^{1}}$-spectrum. This is carried out in Corollary \ref{cor:omega}. If
a motivic spectrum $E$ satisfies a certain connectivity hypothesis, namely that 
$[\Sigma^{i,-q}X_{+},E]_{\SH(\C)} = 0$ for $i<-2q$ and $q\geq 0$, then $\Qsst E$ is an $\Omega_{\P^{1}}$-spectrum. It is then an immediate consequence of Friedlander-Walker's work that $\Qsst\MZ$ represents morphic cohomology and $\Qsst\KGL$ represents semi-topological $K$-theory, where $\MZ$ (resp. $\KGL$) are motivic spectra representing motivic cohomology (resp. algebraic $K$-theory). Since the motivic spectra $\MZ$, $\KGL$, and $\MGL$ all have models which are commutative motivic strict ring spectra we immediately obtain commutative motivic strict ring spectra representing morphic cohomology, semi-topological $K$-theory, and semi-topological cobordism.

A further fundamental property of morphic cohomology and semi-topological $K$-theory is that 
$L^{q}H^{2q}(X)$ and $K_{0}^{sst}(X)$  are computable respectively in terms of the group of codimension $q$-cycles modulo algebraic equivalence and the Grothendieck group of vector bundles modulo algebraic equivalence. In Proposition \ref{prop:algeq} we show that if $E^{k,q}(X) = 0$ for all smooth $X$ and $k>2q$,  then $E^{2q,q}_{\sst}(X) = E^{2q,q}(X)/\sim_{alg}$.
This includes the case of cobordism and so we have that for smooth $X$
$$
\Omega^{q}(X)/\sim_{alg} \xrightarrow{\iso}\MGL^{2q,q}(X)/\sim_{alg} \xrightarrow{\iso}  \MGL_{\sst}^{2q,q}(X).  
$$

The final fundamental property of morphic cohomology and semi-topological $K$-theory is that $L^{q}H^{*}(\C)=H^{*}(\pt)$ (for $q\geq 0$) and that $K^{\sst}_*(\pt)=ku^{-*}(\pt)$. 
In Theorem \ref{thm:toppt} we generalize this to show that if $E$ is in $\SH(\C)^{eff}$, then
$$
E_{\sst}^{-p,q}(\C)= \ppsh_{p,-q}(\Qsst E)(\C) \xrightarrow{\iso} \pi_{p}\L\Re_{\C}E, 
$$
for $q\geq 0$. Note that this theorem does not apply to $\KGL$ but it does apply to the $\P^{1}$-connective $K$-theory $\kgl:=f_{0}\KGL$. This result can be viewed as an integral extension of 
a result of Levine \cite[Theorem 7.1]{Levine:comp} and as in loc.~cit., we rely on  the convergence of Voevodsky's slice tower over $\C$, proved by Levine (more generally for fields with finite cohomological dimension). Theorem \ref{thm:toppt} also applies to cobordism, however this theory has plenty of interesting information in weights $q\leq 0$. A full computation of $\MGL_{\sst}^{p,q}(\C)$ is given in Remark \ref{rem:fullpt},
$$
\MGL_{\sst}^{p,b}(\C) = \begin{cases}
                         MU^{p} & p\leq 2b \\
0 & \textrm{else}.
                        \end{cases}
$$
This is computed using a comparison of the spectral sequence arising from an application of $\Qsst$ to the slice spectral sequence of $\MGL$ and the Atiyah-Hirzebruch spectral sequence for $MU$.

Two further applications to semi-topological cobordism are given in Section \ref{sec:bott}, relying on the product structure on $\Qsst\MGL$. The Friedlander-Mazur $s$-element in $L^{1}H^{0}(\C)$ lifts to an element $s\in \MGL_{\sst}^{0,1}(\C)$ and upon inverting this element we have
$$
(\oplus_{q}\MGL_{\sst}^{2q+*,q}(X))[s^{-1}] = \oplus_{q}MU^{2q+*}(X).
$$
The second is a semi-topological Conner-Floyd isomorphism 
$$
\MGL_{\sst}^{*}(X)\otimes_{\MGL_{\sst}^{0}}K^{\sst}_{0}= K_{-*}^{\sst}(X),
$$
The existence of these semi-topological cohomology theories suggests that one might hope for  a ``semi-topological'' homotopy theory, in which Friedlander-Walker's construction could be viewed as a fibrant replacement functor. Whether this is possible or not is an entirely open problem. For example, it is not even known whether $\Qsst\Qsst\KGL\wkeq \Qsst\KGL$ or not.

Finally we mention that a motivic extension of Friedlander-Walker's construction (without products) has also appeared recently in \cite{KrishnaPark}.

An outline of this paper is as follows. In Section \ref{sec:motivic} we fix notations and recall some basic facts about motivic homotopy theory. In Section \ref{sec:fibrant} we construct  the motivic version of Shipley's detection functor and use it to produce a lax symmetric monoidal fibrant replacement functor. The motivic version of Friedlander-Walker's construction is defined in Section \ref{sec:sst} and some first general properties are recorded. In Section \ref{sec:ex} we record some examples of this construction and some fundamental properties of the motivic Friedlander-Walker construction applied to nice motivic spectra. Finally in Section \ref{sec:bott} we conclude with some applications to semi-topological cobordism.

\textbf{Notation:} We write $\Sch/\C$ for the category of separated schemes of finite type over $\C$ and $\Sm/\C$ for the  category of smooth schemes over $\C$. Our indexing convention for the motivic spheres is the standard one, $S^{p,q} = S^{p-q}\wedge \G_{m}^{\wedge q}$.
For a motivic spectrum $E$, we write $\ppsh_{p,q}E:\sm/\C^{op}\to \ab$ for the presheaf of abelian groups
$\ppsh_{p,q}E(X)= [S^{p,q}\wedge X_+, E]_{\SH(\C)}$.

\section{Preliminaries}\label{sec:motivic}
 In this section we recall the definitions and properties of the models which we use for the stable motivic homotopy category. In this section and the next $S$ is a Noetherian base scheme of finite Krull dimension. Our motivic suspension coordinate is any flasque cofibrant motivic space $T$, which is isomorphic to $(\P^{1},\infty)$ in $\hb(S)$. Standard choices for $T$ are $S^{1}\wedge \G_{m}$, $\A^1/\A^1-0$, and $\P^1$.

\subsection{Motivic model structures}
A \textit{based motivic space} $F$ on $S$ is a presheaf of based simplicial sets 
$F:\sm/S^{op}\to \ssetb$ and we write $\MMb(S)$ for the category of based motivic spaces over $S$. The \textit{motivic model structure} on $\MMb(S)$ which we use in this paper is the Bousfield localization of the global flasque model structure at Nisnevich local and $\A^{1}$-equivalences, which is a cellular and proper model category. Write $\hb(S)$ for its homotopy category (which is equivalent to Morel-Voevodsky's homotopy category). We refer to \cite{I:flasque} for details.

%
%
%

In general if $K$ is a based motivic space $K$, we write $\ihom(K,-)$ for the right adjoint of $K\wedge -$. We write $\Sigma_{T}F = T\wedge F$ and $\Omega_{T}F = \ihom(T,F)$.

A $T$-spectrum consists of a based motivic spaces $E =(E_{0}, E_{1},  \cdots)$ together with structure maps $\sigma_{i}: E_{i}\wedge T \to E_{i+1}$. A map of $T$-spectra $E\to E'$ consists of maps $E_{i}\to E_{i}'$ compatible with the structure maps. 
Write $\spt_{T}(S)$ for the category of $T$-spectra. As $T$ is cofibrant and $\MMb(S)$ is proper, cellular, and cofibrantly generated, Hovey's machinery \cite{Hovey:Spt} yields a stable model structure on $\spt_{T}(\mcal{C})$,
which is again proper, cellular, and cofibrantly generated.
A spectrum $E$ is fibrant in $\spt_{T}(S)$ if it is an $\Omega_T$-spectra: each $E_{i}$ is motivic fibrant and $E_{i}\to \Omega_{T}E_{i+1}$ is a motivic  equivalence.

The identity on $\spt_{T}(S)$ gives a left Quillen equivalence between this model structure Jardine's model structure \cite{Jar:motspt}, formed using the injective model structure on based motivic spaces.

A symmetric $T$-spectrum on $S$ consists of a $T$-spectrum $E$ such that each $E_{n}$ has a $\Sigma_{n}$-action and that every map $\sigma^{r}:X_{n}\wedge T^{r}\to X_{n+r}$ obtained as the 
iteration of the structure maps is $\Sigma_n\times \Sigma_k$-equivariant.
A map of symmetric spectra $E\to F$ is a map of spectra such that $E_{i}\to F_{i}$  is $\Sigma_i$-equivariant.
We write $\sspt_T(S)$ for the category of symmetric spectra.

Hovey's machinery \cite{Hovey:Spt} applies to produce a stable model structure on $\sspt_{T}(S)$. This model structure is proper, cellular, and monoidal.
By \cite[Theorem 8.8]{Hovey:Spt} a symmetric $T$-spectrum $E$ is fibrant if and only if it is an $\Omega_T$-spectra.

There is a Quillen equivalences functor $V:\spt_{T}(S)\rightleftarrows \sspt_{T}(S):U$, where $U$ forgets the extra structure of a symmetric spectrum. 
The distinction between equivalences in these two categories is important to keep in mind: a stable equivalence in $\spt_{T}^{\Sigma}(S)$ need not induce an isomorphism on naive stable motivic homotopy groups. This will be important in Section \ref{sec:fibrant}. 

\begin{definition}
\begin{enumerate}
\item A map $X\to Y$ of symmetric spectra is said to be a \textit{$U$-equivalence} if $U(X)\to U(Y)$ is a stable equivalence in $\spt_{T}(S)$.
\item A symmetric spectrum $X$ is \textit{semistable} if $X\to X^{\fib}$ is a $U$-equivalence, where $(-)^{\fib}$ is a stably fibrant replacement. 
\end{enumerate}
\end{definition}

The utility of this notion is that when $X$ is semistable, the underlying spectrum $UX$ agrees with the value of the total derived functor $(\R U)(X)$.

We finish by describing a set of generating trivial cofibrations for the stable structure, which will be used in the next section. The category $\sspt_{T}(S)$ has a projective model structure: a map $f:E\to E'$ is a weak equivalence or fibration if and only if each $E_{i}\to E'_{i}$ is one in $\MMb(S)$. 
Factor the map $F_{n+1}(X\wedge T) \to F_{n}(X)$ as a projective cofibration $\lambda_{n}^X:F_{n+1}(X\wedge T)\to C_{n}(X)$ 
followed by a trivial level fibration $C_{n}(X)\to F_{n}(X)$ (where $F_{n}:\MMb(S)\to \sspt_T(S)$ is the left adjoint to the functor $E\mapsto E_{n}$). 
Let $J^{mot}$  be a set of generating trivial cofibrations for $\MMb(S)$ whose domains are small relative to the generating cofibrations. Then $J^{proj} = \cup_{n} F_{n}J^{mot}$ is a set of generating trivial cofibrations for the projective level model structure on $\sspt_{T}(S)$   by \cite[Section 8]{Hovey:Spt}.
Recall that the pushout product $i\,\Box \,j$ of two morphisms $i:A\to X$ and $j:B\to Y$ is the morphism $i\,\Box\, j: A\wedge Y \coprod_{A\wedge B} X\wedge B \to X\wedge Y$.
Define 
\begin{equation}\label{eqn:jinj}
J^{\Sigma} = J^{proj} \cup \{\lambda_{n}^{X_+}\,\Box\left(\partial\Delta^{k}_{+}\to \Delta^{k}_{+}\right)|\, X\in \sm/S, \,n\geq 0,\,k\geq 0\}.
\end{equation}

Given a set of maps $A$ in a category $\mcal{C}$, an \textit{$A$-injective} is defined to be a map which has the right lifting property with respect to maps in $A$ and an \textit{$A$-cofibration} is a map which has the left lifting property with respect to $A$-injectives.

\begin{proposition}
The set $J^{\Sigma}$ forms a set of generating trivial cofibrations for the stable model structure on $\sspt_{T}(S)$. 
\end{proposition}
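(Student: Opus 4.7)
The approach is to verify that $J^{\Sigma}$ satisfies the two defining properties of a generating set of trivial cofibrations for the stable model structure: every element is a stable trivial cofibration, and every $J^{\Sigma}$-injective is a stable trivial fibration. This is the motivic analogue of Hovey's characterization \cite[Theorem 8.8]{Hovey:Spt} in his general framework for symmetric spectra.

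For the first property, the maps in $J^{proj}$ are trivial cofibrations in the projective level model structure, hence also stable trivial cofibrations. For the auxiliary maps, the defining factorization exhibits $\lambda_n^{X_+}$ as a projective cofibration. The full composite $F_{n+1}(X_+ \wedge T) \to F_n(X_+)$ is adjoint to the identity structure map of a free spectrum and therefore induces an isomorphism in $\SH(S)$; combined with the fact that $C_n(X_+) \to F_n(X_+)$ is a level trivial fibration, two-out-of-three forces $\lambda_n^{X_+}$ itself to be a stable equivalence. Since the stable structure on $\sspt_T(S)$ is monoidal and the inclusions $\partial\Delta^k_+ \to \Delta^k_+$ act as cofibrations through the simplicial tensor structure inherited from the flasque model structure on $\MMb(S)$, the pushout-product axiom yields each $\lambda_n^{X_+}\,\Box\,(\partial\Delta^k_+ \to \Delta^k_+)$ as a stable trivial cofibration.

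For the second property, suppose $p\colon E\to E'$ has the right lifting property with respect to all of $J^{\Sigma}$. The inclusion $J^{proj}\subset J^{\Sigma}$ already forces $p$ to be a projective level trivial fibration, hence a level fibration and a stable equivalence. By two-out-of-three it suffices to show $p$ is a stable fibration. Following Hovey's characterization, a level fibration is a stable fibration precisely when the canonical comparison from $E_n$ to the pullback $\Omega_T E_{n+1}\times_{\Omega_T E'_{n+1}} E'_n$ is a motivic equivalence at every level $n$. By an adjunction calculation using $F_n\dashv \mathrm{Ev}_n$, the $(\Sigma_T,\Omega_T)$ adjunction, and the simplicial tensoring by $\partial\Delta^k_+ \to \Delta^k_+$, the lifting problems against $\lambda_n^{X_+}\,\Box\,(\partial\Delta^k_+\to \Delta^k_+)$ translate exactly into the requirement that these comparison maps have trivial Kan fibration behavior on function complexes at each smooth $X$. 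Combined with the fact that $C_n(X_+)\to F_n(X_+)$ is a level trivial fibration (so can be absorbed), this is equivalent to the homotopy pullback condition characterizing stable fibrations.

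The main obstacle is the adjunction bookkeeping in the second step: cleanly identifying the right lifting property against the pushout products $\lambda_n^{X_+}\,\Box\,(\partial\Delta^k_+\to \Delta^k_+)$ with the $\Omega_T$-spectrum-style homotopy pullback characterization of stable fibrations. This requires simultaneously juggling three adjunctions (free spectrum, suspension-loop, simplicial tensor) and exploiting that the flasque model structure on $\MMb(S)$ is genuinely simplicial, so the $\partial\Delta^k_+ \to \Delta^k_+$ are cofibrations. Once this translation is established, a standard small object argument applied to $J^{\Sigma}$ yields functorial factorizations as stable trivial cofibrations followed by stable fibrations, completing the verification.
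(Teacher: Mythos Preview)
Your proposal contains a genuine error in its logical structure. You state the goal as showing ``every $J^{\Sigma}$-injective is a stable trivial fibration,'' but this is simply false as a target: the $J^{\Sigma}$-injectives should coincide with the stable \emph{fibrations}, not the stable trivial fibrations (for instance, $E\to *$ is $J^{\Sigma}$-injective for any stably fibrant $E$). The correct pair of conditions is that every element of $J^{\Sigma}$ is a stable trivial cofibration and that every $J^{\Sigma}$-injective is a stable fibration. This confusion propagates into your second step, where you assert that RLP against $J^{proj}$ makes $p$ a ``projective level trivial fibration, hence a level fibration and a stable equivalence.'' Having RLP against generating \emph{trivial} cofibrations only yields a level fibration, not a level trivial fibration, so no stable equivalence is produced here and the subsequent ``two-out-of-three'' maneuver is groundless.

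Even if one repairs the target to ``$J^{\Sigma}$-injective $\Rightarrow$ stable fibration,'' the adjunction bookkeeping you sketch is delicate for \emph{symmetric} spectra: the homotopy-pullback characterization of stable fibrations you invoke is Hovey's result for ordinary spectra, and one must check it transfers. The paper sidesteps this entirely by working on the cofibration side. It factors an arbitrary stable trivial cofibration $f$ via the small object argument as a $J^{\Sigma}$-cofibration followed by a $J^{\Sigma}$-injective $p$; since $f$ and the first factor are stable equivalences, so is $p$. The key lemma (proved separately) is that a $J^{\Sigma}$-injective which is \emph{also} a stable equivalence must be a level trivial fibration---shown by observing that its fiber is a stably fibrant $\Omega_T$-spectrum and hence levelwise contractible. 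Then $f$ lifts against $p$ and is a retract of a $J^{\Sigma}$-cofibration. This argument never needs the full characterization of stable fibrations.
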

\begin{proof}
We have to show that the $J^{\Sigma}$-cofibrations are the trivial stable cofibrations. 
Elements of $J^{\Sigma}$ are trivial stable cofibrations. A $J^{\Sigma}$-cofibration is a
stable trivial cofibration because it is retract of a relative $J^{\Sigma}$-cell complex.

Now suppose that $f:A\to B$ is a stable trivial cofibration. 
The domains of $J^{\Sigma}$ are small relative to cofibrations, in particular we may apply the small object argument, \cite[Proposition 10.5.16]{Hir:loc} to factor $f$ as the composition of a $J^{\Sigma}$-cofibration $i:A\to C$ followed by a $J^{\Sigma}$-injective $p:C\to B$. As both $f$ and $i$ are stable equivalences, so is $p$. By Lemma \ref{lem:jinj} below, $p$ is a trivial level fibration. This implies that $f$ has the left lifting property with respect to $p$ which in turn implies that $f$ is a retract of $i$, and hence a $J^{\Sigma}$-cofibration.
\end{proof}

\begin{lemma}\label{lem:jinj}
 Suppose that $p:E\to D$ is a $J^{\Sigma}$-injective and a stable equivalence. Then it is a level trivial fibration.
\end{lemma}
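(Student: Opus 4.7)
The plan is to decode the $J^\Sigma$-injective hypothesis into concrete conditions on $p$, and then combine them with the stable equivalence hypothesis, using general facts about left Bousfield localization to conclude. First, since $J^{proj}\subseteq J^\Sigma$ and $J^{proj}$ is a set of generating trivial cofibrations for the projective level model structure on $\sspt_T(S)$, the map $p$ is automatically a projective level fibration; that is, each $p_n\colon E_n\to D_n$ is a motivic fibration in $\MMb(S)$.

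Next, I would translate the RLP against the remaining generators $\lambda_n^{X_+}\,\Box\,(\partial\Delta^k_+\to\Delta^k_+)$ via the enriched pushout-product/internal-hom adjunction: imposing this RLP for all $k\geq 0$ is equivalent to the simplicial mapping complex $\Map(\lambda_n^{X_+},p)$ being a trivial Kan fibration. Using that $C_n(X_+)\to F_n(X_+)$ is a projective level trivial fibration and that the levels of $E$ and $D$ are already motivic fibrant by the first step, this mapping complex identifies with the comparison map
$$
E_n(X) \longrightarrow \Omega_T E_{n+1}(X)\times_{\Omega_T D_{n+1}(X)} D_n(X),
$$
so the comparison is a trivial Kan fibration for every $X\in\sm/S$ and every $n\geq 0$. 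In particular it is a sectionwise, hence motivic, weak equivalence of simplicial presheaves. Since $T$ is cofibrant and $\MMb(S)$ is monoidal, $\Sigma_T$ is left Quillen so $\Omega_T$ preserves motivic fibrations; thus $\Omega_T E_{n+1}\to\Omega_T D_{n+1}$ is a motivic fibration and the strict pullback already computes the homotopy pullback.

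Combining these observations, the square
$$
\begin{CD}
E_n @>>> \Omega_T E_{n+1}\\
@VVV @VVV\\
D_n @>>> \Omega_T D_{n+1}
\end{CD}
$$
is a homotopy pullback in $\MMb(S)$ for every $n$, which combined with $p$ being a level motivic fibration is the standard characterization of a stable fibration. Since $p$ is in addition assumed to be a stable equivalence, $p$ is a stable trivial fibration. By the general theory of left Bousfield localization, the class of trivial fibrations is unchanged (cofibrations are unchanged and trivial fibrations are characterized by RLP against cofibrations), so $p$ is a projective level trivial fibration; that is, each $p_n$ is a trivial motivic fibration, which is precisely the conclusion.

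The main obstacle is the third step: justifying that the homotopy pullback condition above really does characterize stable fibrations in the \emph{symmetric} setting rather than only for non-symmetric $T$-spectra. Depending on the precise setup, this may require either an appeal to Hovey's analogous characterization in his framework for symmetric spectra, or a comparison with $\spt_T(S)$ through the Quillen equivalence $V\dashv U$ together with Hovey's pattern for recognizing generating sets of trivial cofibrations in a stable Bousfield localization.
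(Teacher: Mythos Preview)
Your overall strategy---show $p$ is a stable fibration and then invoke ``trivial fibrations are unchanged under localization''---is reasonable in spirit but has two problems, and the paper's proof is organized precisely to avoid them.

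First, a concrete error: you assert that ``the levels of $E$ and $D$ are already motivic fibrant by the first step.'' This is false. Having the right lifting property against $J^{proj}$ only says that each $p_n\colon E_n\to D_n$ is a motivic fibration; it says nothing about $E_n$ or $D_n$ being fibrant objects. This matters: the pullback-hom adjunction translates the RLP against $\lambda_n^{X_+}\,\Box\,(\partial\Delta^k_+\to\Delta^k_+)$ into a condition on $\Map(C_n(X_+),E)$, not on $\Map(F_n(X_+),E)=E_n(X)$. To pass from one to the other you need $E$ (and $D$) levelwise fibrant so that mapping out of the level trivial fibration $C_n(X_+)\to F_n(X_+)$ is a weak equivalence; without that, your identification of the comparison map with $E_n(X)\to \Omega_T E_{n+1}(X)\times_{\Omega_T D_{n+1}(X)} D_n(X)$ is not justified.

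Second, the obstacle you flag is real and in fact threatens circularity. The statement ``level fibration $+$ homotopy pullback squares $=$ stable fibration'' in $\sspt_T(S)$ is essentially equivalent to what the surrounding Proposition is establishing (namely that $J^\Sigma$ detects stable trivial cofibrations, hence its injectives are exactly the stable fibrations). The paper uses this Lemma \emph{inside} the proof of that Proposition, so you cannot assume the fibration characterization here.

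The paper's argument sidesteps both issues by passing to the fiber $H$ of $p$. Since $J^\Sigma$-injectives are closed under pullback, $H\to *$ is $J^\Sigma$-injective; in particular $H$ \emph{is} levelwise motivically fibrant (solving the first problem), and the remaining RLP conditions now read exactly as ``$H$ is an $\Omega_T$-spectrum'', i.e.\ $H$ is stably fibrant. That is only the characterization of fibrant \emph{objects} (Hovey's Theorem~8.8, already cited), not of fibrations, so there is no circularity. From there, $p$ being a stable equivalence forces $H\to *$ to be a stable equivalence between stably fibrant objects, hence a level equivalence, and the long exact sequence of the level fibration $p$ finishes the job.
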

\begin{proof}
 As it has the right lifting property with respect to $J^{proj}$ it is a level fibration. Consider the fiber $H$ of $p$. As $J^{\Sigma}$-injectives are closed under pull-back, the map $H\to *$ is $J^{\Sigma}$-injective. In particular it has the right lifting property with respect to every $\lambda_{n}^{X_+}\,\Box\left(\partial\Delta^{k}_{+}\to \Delta^{k}_{+}\right)$ which implies that $\shom(X, H_{n}) \to \shom(X, \Omega_{T}H_{n+1})$ is a weak equivalence of simplicial sets for every $X$ in $\sm/S$. In particular $H$ is a stably fibrant motivic spectrum. The map $E/H\to D$ is a stable equivalence, and thus $E\to E/H$ is a stable equivalence. This implies that $H\to *$ is a stable equivalence, and thus each $H_{n}\to *$ is  a motivic equivalence which implies the result. 
\end{proof}

\subsection{Simplicial objects}
In this section we record a few useful facts regarding simplicial motivic symmetric spectra and their realizations for which we don't have a reference directly applying to our setting.

\begin{proposition}\label{prop:hopush}
Suppose that 
$$
\xymatrix{
A \ar[r]\ar[d] & B\ar[d]\\ 
X \ar[r] & Y
}
$$
is a pushout in $\spt^{\Sigma}_{T}(S)$ where $A\to X$ is a monomorphism. Write $P$ for the homotopy pushout in $\spt^{\Sigma}_{T}(S)$ (resp.~ in $\spt_{T}(S)$) of $X\leftarrow A \to B$. Then 
 the map $P\to Y$ 
is a stable equivalence (resp.~$U$-equivalence).
\end{proposition}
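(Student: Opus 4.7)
The plan is to reduce the statement to left properness of an auxiliary ``injective'' stable model structure on $\sspt_{T}(S)$ (and analogously on $\spt_{T}(S)$) whose stable equivalences coincide with those used in the paper but in which every monomorphism is a cofibration. Such a structure is obtained by applying Hovey's stabilization machinery (as recalled in the preliminaries) to the injective motivic model structure on $\MMb(S)$, which is left proper with cofibrations exactly the monomorphisms. For $\spt_{T}(S)$ the resulting structure is Jardine's, already cited in the preliminaries. In both cases the identity functor is a left Quillen equivalence onto the structure used in the paper, so the two stable structures share the same class of weak equivalences.

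Once this is in place the argument is essentially immediate. Colimits in $\sspt_{T}(S)$ are computed levelwise together with the $\Sigma_n$-actions, so the hypothesis on $A\to X$ says exactly that each $A_n\to X_n$ is a $\Sigma_n$-equivariant monomorphism of pointed motivic spaces. Hence $A\to X$ is a cofibration in the injective stable model structure on $\sspt_{T}(S)$, and left properness then gives that the ordinary pushout $Y=X\cup_A B$ represents the homotopy pushout in that structure. Since the two stable structures on $\sspt_{T}(S)$ share the same weak equivalences, $Y$ represents the homotopy pushout $P$ computed in the structure used in the paper as well, and the canonical comparison $P\to Y$ is thus a stable equivalence.

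For the $U$-equivalence statement, apply the forgetful functor $U$ throughout: $U$ preserves all colimits, so $U(Y)=U(X)\cup_{U(A)}U(B)$ in $\spt_{T}(S)$, and $U(A)\to U(X)$ is still a monomorphism. Repeating the previous argument in Jardine's stable injective model structure on $\spt_{T}(S)$ exhibits $U(Y)$ as a homotopy pushout of $U(X)\leftarrow U(A)\to U(B)$ in $\spt_{T}(S)$, which is precisely the assertion that $P\to Y$ is a $U$-equivalence.

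The only input here that is not purely formal is the existence of an injective stable model structure on $\sspt_{T}(S)$ with the required two properties (left properness together with monomorphisms as cofibrations). For $\spt_{T}(S)$ this is the content of the reference to Jardine already invoked above; the symmetric analogue is a routine application of Hovey's construction to the injective motivic model structure on $\MMb(S)$, and this is the main point one has to verify carefully.
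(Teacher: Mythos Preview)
Your strategy is sound and genuinely different from the paper's, but there is a gap in the symmetric case. Hovey's construction, as recalled in the preliminaries, always produces a \emph{projective}-type model structure on $\sspt_{T}(S)$: the cofibrations are generated by $F_{n}(i)$ for $i$ a cofibration in $\MMb(S)$. Starting from the injective motivic model structure on $\MMb(S)$ does not change this; the resulting cofibrations in $\sspt_{T}(S)$ are still far from being all monomorphisms (for instance, a monomorphism $E\to F$ that is an isomorphism in level~$0$ but not in level~$1$ need not lie in the class generated by the $F_{n}(i)$). So the sentence ``the symmetric analogue is a routine application of Hovey's construction'' is incorrect. What you actually want is Jardine's model structure on motivic symmetric spectra, where cofibrations are by definition levelwise monomorphisms and left properness is established. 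You would then also need the (true, but not stated in this paper) fact that Jardine's stable equivalences on $\sspt_{T}(S)$ agree with the ones used here. Your argument for the $U$-equivalence half is fine, since there the paper already supplies the comparison with Jardine's structure on $\spt_{T}(S)$.

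For comparison, the paper avoids invoking any injective model structure on symmetric spectra. It works directly inside the given (flasque-based) structure: one writes the homotopy pushout $P$ as an ordinary pushout after cofibrant replacement, and then proves two lemmas. The first says that in a pushout square along a monomorphism, a stable equivalence (resp.\ $U$-equivalence) $A\to B$ yields a stable equivalence (resp.\ $U$-equivalence) $X\to Y$; this is reduced, by factoring $A\to B$ as a stable trivial cofibration followed by a level trivial fibration, to left properness of the injective motivic model structure on $\MMb(S)$ applied levelwise. The second is a gluing lemma comparing two pushouts along monomorphisms when the vertical maps are stable (resp.\ $U$-) equivalences. The trade-off is that your route is conceptually cleaner once the auxiliary structure is in hand, while the paper's route is more self-contained, needing only the injective structure on motivic spaces rather than on symmetric spectra.
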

\begin{proof}
Recall that the homotopy pushout is the ordinary pushout of the diagram $(X^{\cof})' \leftarrow A^{\cof} \to B^{\cof}$, where $(-)^{\cof}$ is a stable cofibrant replacement functor and $A^{\cof}\to (X^{\cof})'\to X^{\cof}$ is a factorization of $A^{\cof}\to X^{\cof}$ as a stable cofibration followed by a level trivial fibration.  
The proposition thus follows immediately from Lemmas \ref{lem:lem1} and \ref{lem:lem2} below.
\end{proof}

\begin{lemma}\label{lem:lem1}
Let 
$$
\xymatrix{
A \ar[r]^{f}\ar[d] & B\ar[d]\\ 
X \ar[r]^{f'} & Y
}
$$
be pushout in $\spt^{\Sigma}_{T}(S)$ where $A\to X$ is a monomorphism. If $f:A\to B$ is a stable equivalence (resp.~$U$ equivalence) then so is $X\to Y$.
\end{lemma}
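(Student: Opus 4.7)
The plan is to reduce the $U$-equivalence statement to the stable-equivalence statement via the forgetful functor, and then to settle the stable case by passing to Jardine's injective stable model structure, where left properness immediately yields the conclusion.

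First I would observe that the forgetful functor $U:\spt^{\Sigma}_T(S)\to \spt_T(S)$ preserves pushouts and monomorphisms. Both colimits and monomorphisms of symmetric spectra are detected levelwise, and the level-$n$ forgetful functor $\MMb(S)^{\Sigma_n}\to \MMb(S)$ admits both a left and a right adjoint (the free and cofree $\Sigma_n$-object functors), hence preserves each. Applying $U$ to the given pushout square therefore produces a pushout in $\spt_T(S)$ with $UA\to UX$ still a monomorphism and, in the $U$-equivalence case, $Uf$ a stable equivalence. This reduces the lemma to the analogous stable-equivalence statements in $\spt^{\Sigma}_T(S)$ and in $\spt_T(S)$ separately.

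For each of these I would pass to the injective stable model structure built from Jardine's injective model structure on $\MMb(S)$. In both cases the cofibrations are exactly the monomorphisms, the weak equivalences coincide with the stable equivalences used throughout the paper (via the Quillen equivalence of identity functors recalled in Section \ref{sec:motivic}, together with its symmetric analogue), and the structure is left proper as a Bousfield localization of a left proper presheaf model structure in which every object is cofibrant. Left properness applied to the pushout along the monomorphism $A\to X$ then gives that $X\to Y$ is a stable equivalence.

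I expect the main technical point to be the identification of the weak equivalences of Jardine's injective stable model structure on symmetric $T$-spectra with those of the Hovey-style stable model structure used elsewhere in the paper; this is however standard. Once it is in hand, the proof is a formal consequence of left properness combined with the levelwise behavior of the forgetful functor.
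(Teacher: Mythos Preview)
Your argument is correct, but it follows a different path from the paper's. You pass wholesale to Jardine's injective \emph{stable} model structures on $\spt_T(S)$ and $\spt^{\Sigma}_T(S)$, where monomorphisms are cofibrations and left properness of the localized structure applies directly. The paper instead stays in the Hovey-style model structure and argues by factorization: it first observes (using left properness of the injective model structure on motivic \emph{spaces}, applied levelwise) that pushouts along monomorphisms preserve \emph{level} equivalences; then it factors $f$ as a stable trivial cofibration $A\to B'$ followed by a level trivial fibration $B'\to B$, so that $X\to X\amalg_A B'$ is a stable equivalence (pushout of a trivial cofibration) and $X\amalg_A B'\to Y$ is a level equivalence. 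Your approach is cleaner once one grants the comparison of weak equivalences between the two stable model structures on symmetric spectra and the left properness of Jardine's stable structure; the paper's approach trades that external input for an elementary factorization argument that only needs left properness at the space level. Both reduce the $U$-equivalence statement in the same way, by applying $U$.
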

\begin{proof}
Pushouts along monomorphisms preserve levelwise equivalences. This follows from the left properness of the injective motivic model structure, that injective cofibrations of motivic spaces are the monomorphisms,  and that equivalences in $\MMb(S)$ agree with those in the injective motivic structure.

Let $A\to B$ is a stable equivalence and write $A\to B'\to B$ for a factorization in terms of a stable trivial cofibration followed by a level trivial fibration $B'\to B$. 
As $X\to X\coprod_{A}B'$ is the pushout of a trivial stable cofibration, it is a stable equivalence. The previous observation implies that $X\coprod_{A}B'\to Y$ is a level equivalence.

Apply the functor $U$ and the argument in the previous paragraph to get the statement concerning $U$-equivalences.
\end{proof}

\begin{lemma}\label{lem:lem2}
 Suppose that 
$$
\xymatrix{
C_{1} \ar[d] & A_{1} \ar@{_{(}->}[l]_-{i_1}\ar[d]\ar[r] & B_{1} \ar[d] \\
C_{2} & A_{2} \ar@{_{(}->}[l]_-{i_{2}}\ar[r] & B_{2}
}
$$
is a commutative diagram in $\spt_{T}(S)$, where the vertical arrows are stable equivalences (resp.~$U$-equivalences) and $i_1$, $i_{2}$ are monomorphisms. Then the induced map  $C_{1}\coprod_{A_{1}}B_{1}\to C_{2}\coprod_{A_{2}} B_{2}$ is a stable equivalence (resp.~$U$-equivalence). 
\end{lemma}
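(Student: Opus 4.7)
The plan is to reduce, via mapping cylinders, to the case where all four horizontal arrows are monomorphisms, and then to interpolate a single intermediate pushout and apply Lemma \ref{lem:lem1} twice. Let $M_i := (A_i \wedge \Delta^1_+) \coprod_{A_i} B_i$ denote the reduced mapping cylinder of $A_i \to B_i$ (with $A_i$ included into $A_i \wedge \Delta^1_+$ at $\{1\}_+$), so that $A_i \hookrightarrow M_i$ is a monomorphism and the projection $M_i \to B_i$ is a simplicial homotopy equivalence. Then $M_1 \to M_2$ is a stable equivalence (resp.~$U$-equivalence) by functoriality, and the proof of Lemma \ref{lem:lem1} shows that pushout along the monomorphism $A_i \hookrightarrow C_i$ preserves level equivalences; in particular $\overline{P}_i := C_i \coprod_{A_i} M_i \to P_i := C_i \coprod_{A_i} B_i$ is a level equivalence, reducing the problem to proving that $\overline{P}_1 \to \overline{P}_2$ is a stable equivalence (resp.~$U$-equivalence).

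Introduce the intermediate $N := C_2 \coprod_{A_1} M_1$, where $A_1 \to C_2$ is the common composite through either $A_2$ or $C_1$, and factor the map as $\overline{P}_1 \to N \to \overline{P}_2$. Since monomorphisms are detected levelwise and pushouts of monomorphisms of pointed simplicial presheaves are again monomorphisms, we may freely use that pushouts preserve monomorphisms. In particular $C_1 \hookrightarrow \overline{P}_1$ is a monomorphism (pushout of $A_1 \hookrightarrow M_1$ along $A_1 \to C_1$), and rewriting $N = C_2 \coprod_{C_1} \overline{P}_1$, Lemma \ref{lem:lem1} applied to the stable equivalence $C_1 \to C_2$ along this monomorphism shows that $\overline{P}_1 \to N$ is a stable equivalence.

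For the remaining map $N \to \overline{P}_2$, set $M_1' := A_2 \coprod_{A_1} M_1$, so that $N = C_2 \coprod_{A_2} M_1'$ and $\overline{P}_2 = N \coprod_{M_1'} M_2$. In the defining pushout of $M_1'$, the stable equivalence $A_1 \to A_2$ is pushed out along the monomorphism $A_1 \hookrightarrow M_1$, so Lemma \ref{lem:lem1} yields that $M_1 \to M_1'$ is a stable equivalence; combined with the stable equivalence $M_1 \to M_2$ and two-out-of-three, $M_1' \to M_2$ is a stable equivalence. Similarly $M_1' \hookrightarrow N$ is a monomorphism (pushout of $A_2 \hookrightarrow C_2$ along $A_2 \to M_1'$), so a final application of Lemma \ref{lem:lem1} gives that $N \to \overline{P}_2$ is a stable equivalence. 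The $U$-equivalence case is identical. The main bookkeeping obstacle is verifying that each comparison we form really is an instance of Lemma \ref{lem:lem1}, so as to avoid an implicit circular use of the gluing statement we are trying to prove.
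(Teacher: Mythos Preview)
Your proof is correct. The argument is carefully organized: the mapping cylinder reduction makes both right-hand horizontal maps monomorphisms, and the two-step factorization $\overline{P}_1\to N\to \overline{P}_2$ lets you isolate each comparison as a single pushout of a stable equivalence along a monomorphism, so that Lemma~\ref{lem:lem1} applies three times. The one place where the wording is a bit compressed is the claim that $\overline{P}_i\to P_i$ is a level equivalence: what you actually use is that $M_i\to \overline{P}_i$ is a monomorphism (being the pushout of $A_i\hookrightarrow C_i$) and then rewrite $P_i=\overline{P}_i\amalg_{M_i}B_i$, so that the level equivalence $M_i\to B_i$ is pushed along a monomorphism. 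This is fine, but it is not literally ``pushout along $A_i\hookrightarrow C_i$ preserves level equivalences'' in the form stated in the proof of Lemma~\ref{lem:lem1}; you may want to spell that out. Similarly, ``$M_1\to M_2$ is a stable equivalence by functoriality'' is really two-out-of-three against the natural level equivalences $M_i\to B_i$.

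By comparison, the paper's proof is shorter and less explicit: it factors $A_i\to B_i$ as a stable cofibration followed by a level trivial fibration (playing the role of your mapping cylinder), uses Lemma~\ref{lem:lem1} for the reduction exactly as you do, and then simply invokes the gluing lemma for left proper model categories once both $A_i\to B_i'$ are cofibrations. Your argument essentially \emph{proves} that gluing lemma in this special setting, using only Lemma~\ref{lem:lem1} and pushout-pasting, so it is more self-contained and works uniformly for the $U$-equivalence case without needing to name a model structure in which $U$-equivalences are the weak equivalences. The paper's route is quicker if one is willing to cite the standard fact; yours makes the dependence on left properness (via Lemma~\ref{lem:lem1}) completely transparent.
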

\begin{proof}
The previous lemma reduces the statement to the case when the right horizontal maps are cofibrations (consider a factorization of $A_{i}\to B_{i}$ into a stable cofibration followed by a level trivial fibration). The statement is then a standard fact about left proper model categories.
\end{proof}

\begin{definition}
The \textit{diagonal} $|W|$ of a simplicial object $d\mapsto W(d)$ in $\spt_{T}^{\Sigma}(S)$ 
 is the levelwise, schemewise diagonal. That is,
$|W|_{i} = \diag (d\mapsto W(d)_{i})$, is the schemewise application of the usual diagonal functor for bisimplicial sets. The  structure maps are given by
$\diag\left(W(d)_{i}\right)\wedge T = \diag \left(W(d)_{i}\wedge T\right) 
\to \diag\left(W(d)_{i+1}\right)$.
\end{definition}

We have a coequalizer diagram
$\coprod_{m\to n}W(n)\wedge \Delta^m_+ \rightrightarrows \coprod_{n}W(n)\wedge \Delta^{n}_+ 
\to |W|$ in $\spt_{T}^{\Sigma}(S)$. Filter $|W|$ by  
$$
|W|^{(n)} = \mathrm{Image}\left(\coprod_{k\leq n}W(k)\wedge\Delta^{k}_+ \to |W|\right) \subseteq |W|
$$ 
and $|W|^{(n)} = *$ for $n<0$.
Let $s_{[r]}W(p) = \cup_{0\leq i\leq r}W(p)\subseteq W(p+1)$. We obtain two pushout squares 
\begin{equation}\label{eqn:PO1}
\xymatrix{
s_{[r]}W(p-1) \ar@{^{(}->}[r]\ar[d] & W(p) \ar[d] \\
 s_{[r]}W(p)\ar[r] & s_{[r+1]}W(p),
}
\end{equation}
and
\begin{equation}\label{eqn:PO2}
\xymatrix{
{s_{[p]}W(p)\wedge \Delta^{p+1}_+\coprod_{s_{[p]}W(p)\wedge \partial\Delta^{p+1}_{+}}
W(p+1)\wedge\partial \Delta^{p+1}_+  }\ar@{^{(}->}[r]\ar[d] &
W(p+1)\wedge \Delta^{p+1}_{+} \ar[d]\\
*+[r]{|W|^{(p)}}\ar[r] & *+[r]{|W|^{(p+1)}.}
}
\end{equation}
A standard inductive argument yields the following.
\begin{theorem}\label{thm:diag}
 Let $X(d)\to Y(d)$ be a map of  simplicial objects in $\spt_{T}^{\Sigma}(S)$ which is a level equivalence (resp.~$U$-equivalence, resp.~stable equivalence) for each $d$. Then the induced map $|X|\to |Y|$ is a level equivalence, (resp.~$U$-equivalence, resp.~stable equivalence).
\end{theorem}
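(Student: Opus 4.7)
The strategy is a skeletal induction using precisely the two pushout squares (\ref{eqn:PO1}) and (\ref{eqn:PO2}) already written down in the text. The key observation is that each of the three classes of equivalences is closed under (i) pushouts along monomorphisms, which is exactly the content of Lemma \ref{lem:lem2} (combined with Lemma \ref{lem:lem1}), (ii) smashing with a fixed cofibrant pointed simplicial set, in particular with $\Delta^{k}_{+}$ and $\partial\Delta^{k}_{+}$, and (iii) sequential colimits along monomorphisms. Properties (i) and (ii) are immediate from the lemmas above and from the fact that level equivalences in $\MMb(S)$ agree with those in the injective model structure (in which the $\Delta^{k}_{+}$ are cofibrant). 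For (iii): level equivalences commute with all filtered colimits pointwise, $U$-equivalences because naive motivic stable homotopy sheaves commute with filtered colimits of monomorphisms of $T$-spectra, and stable equivalences because the stable model structure on $\sspt^{\Sigma}_{T}(S)$ is cofibrantly generated (so transfinite compositions of trivial cofibrations remain trivial cofibrations).

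First I would record that the left-hand vertical maps in (\ref{eqn:PO1}) and (\ref{eqn:PO2}) are indeed monomorphisms in $\sspt^{\Sigma}_{T}(S)$. For (\ref{eqn:PO1}) this is immediate from the definition of $s_{[r]}W(p)$ as a subobject of $W(p+1)$. For (\ref{eqn:PO2}) the left vertical arrow is the pushout-product of the degeneracy inclusion $s_{[p]}W(p) \hookrightarrow W(p+1)$ with $\partial\Delta^{p+1}_{+} \hookrightarrow \Delta^{p+1}_{+}$, and hence also a monomorphism.

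Next I would run a double induction on the map $X \to Y$. For fixed $p$, an inner induction on $r$ using (\ref{eqn:PO1}) together with Lemma \ref{lem:lem2} applied to the cube induced by $X(d) \to Y(d)$ shows that $s_{[r]}X(p) \to s_{[r]}Y(p)$ is a level (resp. $U$-, resp. stable) equivalence; the base case uses only that $X(d) \to Y(d)$ is such an equivalence for each $d$, and the inductive step combines (i) with the fact that the middle vertical $X(p) \to Y(p)$ is of the required type. An outer induction on $p$ using (\ref{eqn:PO2}), Lemma \ref{lem:lem2}, property (ii) (applied to $\Delta^{p+1}_{+}$ and $\partial\Delta^{p+1}_{+}$), and the conclusion of the inner induction then shows $|X|^{(p)} \to |Y|^{(p)}$ is an equivalence of the required type. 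Finally, since $|W| = \colim_{p} |W|^{(p)}$ is a sequential colimit along monomorphisms, property (iii) gives the conclusion.

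The main obstacle, I expect, is not any single step but the bookkeeping of the double induction: one must correctly identify all four corners of (\ref{eqn:PO2}) as functors of $W$ so that Lemma \ref{lem:lem2} genuinely applies to the map of cubes induced by $X \to Y$, and check that the appropriate monomorphism hypothesis is preserved under pushout-product with $\partial\Delta^{p+1}_{+} \hookrightarrow \Delta^{p+1}_{+}$. Of the three cases, the stable equivalence case is the most delicate since it relies on the cellular/cofibrantly generated nature of $\sspt^{\Sigma}_{T}(S)$ for closure under transfinite composition; the other two cases follow by essentially the same inductive pattern with easier closure arguments.
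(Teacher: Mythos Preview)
Your proposal is correct and is precisely the ``standard inductive argument'' the paper invokes in its one-line proof: the skeletal filtration, the pushout squares (\ref{eqn:PO1}) and (\ref{eqn:PO2}), and Lemma~\ref{lem:lem2} are exactly the intended ingredients.

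One small imprecision worth fixing: your justification of closure property (iii) for stable equivalences does not quite work as stated. In the flasque-based stable model structure on $\sspt^{\Sigma}_{T}(S)$ used in the paper, monomorphisms are not cofibrations, so ``stable equivalence $+$ monomorphism'' is not the same as ``trivial cofibration'', and the transfinite-composition closure of trivial cofibrations is not directly applicable. The easiest repair is to pass to Jardine's injective-local model structure, which has the same weak equivalences and in which monomorphisms \emph{are} cofibrations; there your sequence $|X|^{(p)}\hookrightarrow|X|^{(p+1)}$ (and the analogous maps obtained after the gluing reduction) are genuine cofibrations, so the colimit computes the homotopy colimit and the argument goes through. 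This is a cosmetic adjustment; the structure of your induction is exactly right.
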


There are two points of view commonly taken regarding homotopy colimits in a model category (which ultimately yield the same result in the homotopy category). One is that homotopy colimits are defined as the derived functors of colimit and the other is that it is the result of applying a direct formula such as the Bousfield-Kan formula \cite{BK}. The former approach is taken in \cite{CS:diagrams} and the latter in \cite{H:loc}. For our purposes it is convenient to define a homotopy colimit  to be the result of applying an explicit formula, namely the bar construction. 
\begin{definition}\label{def:hocolim}
Let $J$ be a small category. The homotopy colimit of a functor 
 $F:J\to \spt_{T}^{\Sigma}(S)$ is the realization $|B(\ast, J,F)|$ of the simplicial object 
$$
B(*,J,F)_{n} = \coprod_{j_{n}\to\cdots\to j_{0}}F(j_{n}).
$$
\end{definition}

Note that $(\hocolim_{J}F)_{n}(X) = \hocolim_{J} (F_{n}(X))$. 

\begin{lemma}\label{lem:hocolim}
Let $F,G:J\to \spt_{T}^{\Sigma}(S)$ be two functors and $F\to G$ a natural transformation such that $F(j)\to G(j)$ is a levelwise equivalence (resp.~$U$-equivalence, resp.~stable equivalence). Then $\hocolim_{J}F\to \hocolim_{J}G$ is also a levelwise equivalence (resp.~$U$-equivalence, resp.~stable equivalence).
\end{lemma}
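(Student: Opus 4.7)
The plan is to combine Theorem \ref{thm:diag} with the observation that each of the three classes of equivalences in $\sspt_T(S)$ is preserved under arbitrary coproducts. By Definition \ref{def:hocolim}, the natural transformation $F\to G$ induces a map of simplicial objects $B(\ast,J,F)\to B(\ast,J,G)$ whose term in simplicial degree $n$ is the coproduct
$$
\coprod_{j_n\to\cdots\to j_0}\bigl(F(j_n)\to G(j_n)\bigr),
$$
indexed over the set of $n$-chains in $J$. If each such coproduct is a levelwise (resp.~$U$-, resp.~stable) equivalence, Theorem \ref{thm:diag} immediately yields the lemma after passing to the realization.

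For binary coproducts the claim follows from Lemma \ref{lem:lem2}: writing $X\coprod Y = X\coprod_{\ast}Y$, the basepoint inclusion $\ast\to X$ is a monomorphism in $\sspt_T(S)$ since it is so sectionwise and levelwise. Applying Lemma \ref{lem:lem2} with $A_i=\ast$, $C_i=X_i$, and $B_i=Y_i$ shows that the coproduct of two equivalences of a given type is again an equivalence of that type, and an induction extends this to any finite coproduct.

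For an arbitrary set-indexed coproduct $\coprod_{i\in I}X_i$, I realize it as the filtered colimit over the finite subsets $F\subseteq I$ of the finite sub-coproducts, whose transition maps are monomorphisms by the preceding paragraph. Filtered colimits of pointed motivic spaces along monomorphisms preserve motivic weak equivalences (filtered colimits commute with the finite limits computing simplicial homotopy groups), which handles the levelwise case, and after applying the forgetful functor $U$ also the $U$-equivalence case. The main obstacle is the preservation of stable equivalences under such filtered colimits in $\sspt_T(S)$: the cleanest way to handle it is to exploit the left properness and cellularity of the stable model structure to conclude that a transfinite composition of stable-trivial cofibrations is again a stable-trivial cofibration, together with the observation that the transition monomorphisms appearing in our colimit system are in particular stable cofibrations. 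Combining these preservation statements with the reduction via Theorem \ref{thm:diag} completes the proof.
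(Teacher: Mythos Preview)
Your overall strategy---reduce to showing that coproducts preserve the three classes of equivalences and then invoke Theorem \ref{thm:diag}---is exactly the paper's approach. The paper simply asserts the coproduct fact without proof; you attempt to justify it, and that is where problems arise.

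The finite-coproduct step via Lemma \ref{lem:lem2} is fine. The difficulty is in the passage to arbitrary coproducts. Two specific issues:

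\emph{First}, your parenthetical justification for the levelwise case---``filtered colimits commute with the finite limits computing simplicial homotopy groups''---does not establish what you need. Levelwise equivalences are \emph{motivic} weak equivalences at each level, and motivic weak equivalences are not detected by sectionwise simplicial homotopy groups. The conclusion is nevertheless true, but for different reasons.

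\emph{Second, and more seriously}, in the stable case you assert that the transition monomorphisms $\coprod_{i\in F}X_i \hookrightarrow \coprod_{i\in F'}X_i$ are stable cofibrations. In the model structure the paper uses (built from the flasque structure via Hovey's machinery), cofibrations are far more restrictive than monomorphisms: such an inclusion is a cofibration only when the summands $X_i$ for $i\in F'\setminus F$ are cofibrant, which is not assumed. So the appeal to ``transfinite compositions of stable-trivial cofibrations'' does not apply, and the argument for the stable-equivalence case has a genuine gap.

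A clean repair is to note that the injective motivic model structures on $\MMb(S)$, $\spt_T(S)$, and $\sspt_T(S)$ have the \emph{same} weak equivalences as the flasque-based ones, and in the injective structures every object is cofibrant. Hence arbitrary coproducts compute homotopy coproducts there, and homotopy coproducts of weak equivalences are weak equivalences; this handles all three cases at once. Alternatively, for stable and $U$-equivalences one can argue with cofibers: the cofiber of $\coprod f_i$ is $\coprod \operatorname{cofib}(f_i)$, and then use the injective argument (or a direct compactness argument) to see a coproduct of stably trivial objects is stably trivial.
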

\begin{proof}
 This follows from Theorem \ref{thm:diag} together with the fact that coproducts preserve levelwise equivalences (resp.~$U$-equivalences, resp.~stable equivalences).
\end{proof}


Next we record a spectral sequence relating the homotopy presheaves of a simplicial motivic spectrum to those of its diagonal. This will be useful for out applications in the later sections. The construction presented here is a motivic translation of Jardine's construction \cite{Jar:genet} in the case of ordinary simplicial spectra.

 A motivic spectrum $X$ is said to be \textit{compact} provided the functor $[X,- ]_{\SH(S)}$ commutes with filtered colimits. 
\begin{theorem}\label{thm:sspseq}
 Let $d\mapsto E(d)$ be a simplicial object in $\spt_{T}^{\Sigma}(S)$. 
For any compact motivic spectrum $X$, we have a convergent spectral sequence
$$
E^{2}_{p,q} = H_{p}\big(d\mapsto [\Sigma^{q,t}X,\, E(d)]_{\SH(S)}\big) \Longrightarrow [\Sigma^{p+q,t}X,\,|E|]_{\SH(S)}.
$$
\end{theorem}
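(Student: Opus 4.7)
The plan is to construct the spectral sequence from the skeletal filtration $\{|E|^{(n)}\}_{n\geq 0}$ of $|E|$ already introduced via the pushout squares (\ref{eqn:PO1}) and (\ref{eqn:PO2}). First I would observe that the top-left map in (\ref{eqn:PO2}) is a monomorphism in $\spt^{\Sigma}_{T}(S)$ (as it is built by gluing subobjects along subobjects of $W(p+1)\wedge\Delta^{p+1}_{+}$), so Proposition \ref{prop:hopush} identifies it as a homotopy pushout. Consequently the square (\ref{eqn:PO2}) yields a stable cofiber sequence
$$
|E|^{(p)} \to |E|^{(p+1)} \to E(p+1)/s_{[p]}E(p)\wedge S^{p+1}
$$
in $\SH(S)$, where $E(p+1)/s_{[p]}E(p)$ denotes the homotopy cofiber of $s_{[p]}E(p)\hookrightarrow E(p+1)$. (The identification of this cofiber uses that $\Delta^{p+1}_+/\partial\Delta^{p+1}_+ \simeq S^{p+1}$ together with the fact that the left vertical map in (\ref{eqn:PO2}) collapses $W(p+1)\wedge\partial\Delta^{p+1}_+$ into $|W|^{(p)}$.)

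Next I would apply the cohomological functor $[\Sigma^{*,t}X,-]_{\SH(S)}$ to the resulting tower $\{|E|^{(n)}\}$. This produces an exact couple with
$$
E^{1}_{p,q} = [\Sigma^{p+q,t}X,\, E(p)/s_{[p-1]}E(p-1)\wedge S^{p}]_{\SH(S)} \cong [\Sigma^{q,t}X,\, E(p)/s_{[p-1]}E(p-1)]_{\SH(S)},
$$
with $d_1$-differential induced by the connecting maps of the cofiber sequences. Next I would identify the complex $(E^{1}_{*,q},d_{1})$ with the normalized chain complex associated to the simplicial abelian group
$$
d\mapsto [\Sigma^{q,t}X,\, E(d)]_{\SH(S)}.
$$
This is the motivic analog of the standard Bousfield--Friedlander identification: the cofiber $E(p)/s_{[p-1]}E(p-1)$ computes the normalized part, and tracing through the face maps shows the $d_1$-differential is the Dold--Kan alternating sum. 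By Dold--Kan, the homology of this normalized complex agrees with the homology of the unnormalized complex, giving $E^{2}_{p,q}=H_{p}(d\mapsto [\Sigma^{q,t}X,E(d)]_{\SH(S)})$ as claimed.

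For convergence I would invoke compactness of $X$. Since $\Sigma^{q,t}X$ is compact whenever $X$ is (as $\Sigma^{q,t}$ is a shift/desuspension that preserves compactness), and $|E|=\colim_n |E|^{(n)}$ as a filtered colimit in $\spt^{\Sigma}_{T}(S)$ with each transition map a stable cofibration, we obtain
$$
[\Sigma^{p+q,t}X,\, |E|]_{\SH(S)} = \colim_{n}[\Sigma^{p+q,t}X,\, |E|^{(n)}]_{\SH(S)}.
$$
Together with the fact that for each fixed $(p,q)$ only finitely many skeleta contribute to the $E^{1}$-term in that bidegree, this gives convergence of the spectral sequence. The main technical obstacle I would anticipate is the identification of the $d_{1}$-differential with the Dold--Kan boundary; this amounts to a careful bookkeeping of how the face maps $d_{i}:E(p+1)\to E(p)$ interact with the inclusions $s_{[p]}E(p)\hookrightarrow E(p+1)$ in the pushout (\ref{eqn:PO2}), and is the only place where the argument is not a purely formal consequence of Proposition \ref{prop:hopush} and compactness.
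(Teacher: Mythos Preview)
Your proposal is correct and follows essentially the same strategy as the paper's proof: both use the skeletal filtration $\{|E|^{(n)}\}$, identify the successive cofibers as $(E(p)/s_{[p-1]}E(p-1))\wedge S^{p}$ via the pushout (\ref{eqn:PO2}), recognize the resulting $E^{1}$-page as the normalized chain complex of $d\mapsto [\Sigma^{q,t}X,E(d)]$, and obtain convergence from compactness of $X$ together with $|E|^{(p)}=*$ for $p<0$. The paper carries out in detail the two steps you flag as technical --- the isomorphism $[\Sigma^{q,t}X,E(p)/s_{[p-1]}E(p-1)]\cong \mathcal{N}_{p}\big(d\mapsto [\Sigma^{q,t}X,E(d)]\big)$ (via a factorization of $s_{[r]}[\Sigma^{q,t}X,E(p)]$ through $[\Sigma^{q,t}X,s_{[r]}E(p)]$) and the identification of $d_{1}$ with $\sum(-1)^{i}d_{i}$ (via a comparison with the map $S^{p}\to \Sigma\partial\Delta^{p}_{+}/\sk_{p-2}\Delta^{p}_{+}$, following Jardine) --- so your anticipation of where the work lies is accurate.
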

\begin{proof}
Jardine's construction given in \cite[Proposition 4.21]{Jar:genet} in the case of ordinary simplicial spectra applies in the motivic setting. We present the main points and refer to loc.~cit.~ for complete details.

Write $|E|^{(p/p-1)} = |E|^{(p)}/|E|^{(p-1)}$. We have homotopy cofiber sequences 
$$
|E|^{(p-1)}\to |E|^{(p)}\to |E|^{(p/p-1)}\xrightarrow{\partial} \Sigma^{1,0}|E|^{(p-1)}.
$$
Setting $D^{1}_{p,q}= [\Sigma^{p+q,t}X, |E|^{(p)}]$ and $E^{1}_{p,q} = [\Sigma^{p+q,t}X, |E|^{(p/p-1)}]$ yields an exact couple and hence an associated spectral sequence. 
The target of this spectral sequence is $D^{\infty}_{n} = \colim_{p\to \infty}D^{1}_{p,n-p} = \colim_{p\to \infty}[\Sigma^{n,t}X, |E|^{(p)}] =  [\Sigma^{n,t}X, |E|]$. Since $D^{1}_{p,q} = 0$ for $p<0$. the spectral sequence is  convergent.

The differential $E^{1}_{p,q} \to E^{1}_{p-1,q}$ is induced by applying $[\Sigma^{p+q,t}X, -]$ to the composite map 
$|E|^{(p/p-1)}\xrightarrow{\partial} \Sigma^{1,0}|E|^{(p-1)}
\xrightarrow{\phi}\Sigma^{1,0}|E|^{(p-1/p-2)}$.
An examination, similar to that in \cite[p. 102]{Jar:genet}, of the pushout squares (\ref{eqn:PO1}) and (\ref{eqn:PO2}) appearing before Theorem \ref{thm:diag} shows that there is a natural isomorphism
\begin{equation}\label{eqn:pp-1}
|E|^{(p/p-1)} \iso (E(p)/s_{[p-1]}E(p-1))\wedge S^{p}.
\end{equation}
For a simplicial abelian group $A_{*}$, define as usual $s_{[r]}A$ to be the subgroup generated by the images of the $s_{i}$, $0\leq i\leq r$.
 The argument of \cite[Lemma 4.15]{Jar:genet} applies here and shows that the inclusion 
$s_{[r]}[\Sigma^{n,t}X, E(p)]\subseteq [\Sigma^{n,t}X, E(p)]$ has a natural factorization 
$$
\xymatrix{
s_{[r]}[\Sigma^{n,t}X, E(p)]\ar@{^{(}->}[r] \ar[d]^{\iso} & [\Sigma^{n,t}X, E(p)]\\
[\Sigma^{p+q,t}X, s_{[r]}E(p)]. \ar[ur]_{i_*} & 
}
$$
This yields the natural isomorphism 
\begin{equation}\label{eqn:e1}
E^{1}_{p,q}= [\Sigma^{q,t}X, E(p)]/s_{[p-1]}[\Sigma^{n,t}X, E(p)] = \mcal{N}_p\big(d\mapsto[\Sigma^{q,t}X, E(d)]\big),
\end{equation}
where  $\mcal{N}_*A$ is the normalized chain complex associated to a simplicial abelian group.

It remains to identify the homology of $E^{1}_{*,q}$ with that of the simplicial abelian group $d\mapsto[\Sigma^{q,t}X, E(d)]$. First we consider the comparison diagram
$$
\xymatrix{
E(p)\wedge \sk_{p-2}\Delta^{p}_{+} \ar@{^{(}->}[r]\ar[d] & 
E(p) \wedge \partial \Delta^p_+  \ar@{^{(}->}[r]\ar[d] & E(p)\wedge \Delta^{p}_+ \ar[d] \\
*+[r]{|E|^{(p-2)}} \ar@{^{(}->}[r]&  *+[r]{|E|^{(p-1)}} \ar@{^{(}->}[r] & *+[r]{|E|^{(p)}.}
}
$$
This yields the commutative diagram
\begin{equation}\label{eqn:bigd}
\xymatrix{
E(p)\wedge S^{p} \ar[r]^-{\partial}\ar[d] & 
\Sigma^{1,0} E(p) \wedge \partial \Delta^p_+  \ar[r]^-{\phi}\ar[d] & \Sigma^{0,1}E(p)\wedge \partial\Delta^{p}_+/\sk_{p-2}\Delta^{p}_{+} \ar[d]\\
*+[r]{|E|^{(p/p-1)}} \ar[r]^-{\partial}&  *+[r]{|E|^{(p-1)} }\ar[r]^-{\phi} & *+[r]{|E|^{(p-1/p-2)} .}
}
\end{equation}
The simplicial set $\partial\Delta^{p}_+/\sk_{p-2}\Delta^{p}_{+}$ is a $(p+1)$-fold wedge of copies of $S^{p-1}$, with the $i$th summand corresponding to the collapsing of all but the $i$th face of $\Delta^{p}$ to a point. Applying $[\Sigma^{p+q,t}X, -]$ and using this identification, 
the top row becomes $[\Sigma^{q,t}X, E(p)] \to \oplus_{0\leq i \leq p} [\Sigma^{q,t}X, E(p)]$. The map into the $i$th component is multiplication by the degree of the map 
$S^{p}\to S^{p}$ which is the composite of 
 $S^{p}\xrightarrow{\partial} \Sigma\partial\Delta^p_+ \xrightarrow{\phi} \Sigma\partial\Delta^{p}_+/\sk_{p-2}\Delta^{p}_{+}$ with the projection $\partial\Delta^{p}_+/\sk_{p-2}\Delta^{p}_{+}\to S^{p-1}$ to the $i$th component. 
In \cite[Lemma 4.9]{Jar:genet} it is shown this is map has degree $(-1)^{i}$.

The composition $E(p)\wedge \Delta^{p-1}_+ \to E(p)\wedge \Delta^{p}_+\to |E|$ induced by $d^i$ and the composition induced by $d_{i}$, 
$E(p)\wedge \Delta^{p-1}_+\to E(p-1)\wedge \Delta^{p-1}_+ \to |E|$  coincide. One concludes that there is a commutative diagram
\begin{equation}\label{eqn:iddif}
\xymatrix@C-1pc{
\oplus_i [\Sigma^{q,t}X, E(p)] \ar[r]^-{\iso}\ar[d]_{\sum(-1)^id_i} & [\Sigma^{p+q-1,t}X, E(p)\wedge \partial\Delta^{p}_+/\sk_{p-2}\Delta^{p}_+]\ar[dr] &  \\
[\Sigma^{q,t}X, E(p-1)] \ar[r] & [\Sigma^{p+q-1,t}X, E(p)\wedge S^{p-1}] \ar[r] & E_1^{p-1,q}.
}
\end{equation}
Combining the diagrams (\ref{eqn:pp-1}), (\ref{eqn:e1}), (\ref{eqn:bigd}), and (\ref{eqn:iddif}) easily yields the desired identification of the $E^{2}$-term.
\end{proof}

\section{Shipley's construction in motivic homotopy theory}\label{sec:fibrant}
In this section we show that Shipley's detection functor \cite{Shipley:THH} for symmetric spectra in simplicial sets admits a generalization to  motivic symmetric spectra. As a result, we obtain 
a lax symmetric monoidal fibrant replacement functor on motivic symmetric spectra, which will be used in the next section. The construction in this section follows Shipley's strategy in loc.~cit.~ and ultimately the key point is to relate certain  free $\II$-diagrams in motivic symmetric spectra with free motivic symmetric spectra.  At this point, Shipley makes use of spacewise connectivity arguments based on the Blakers-Massey and the Freudenthal Suspension theorems which do not apply in the motivic setting. Fortunately, these spacewise arguments are readily replaced with a spectrum level argument that does apply to motivic symmetric spectra.


As observed in \cite[Lemma 2.2]{RSO:strict}, there is a lax symmetric monoidal fibrant replacement functor $\LL'$ on $\MMb(S)$. This in turn induces a lax symmetric monoidal level-wise fibrant replacement on $\sspt_{T}(S)$, which we again denote by $\LL'$.

We write $\II$ for the category whose objects are the finite sets $\nn=\{1,\ldots,n\}$ and whose morphisms are injections. 
Let $K$ be a based motivic space. Recall that the free symmetric spectrum  $F_{m}K$ is defined by
$$
F_{m}(K)_{n}=(\Sigma_{n})_+\wedge_{\Sigma_{n-m}} (K\wedge T^{n-m})
$$ 
if $n\geq m$ and otherwise $(F_mK)_n= *$.  
We have an isomorphism  of $\Sigma_{n}$-sets, ${\II}(\mm,\nn)\iso \Sigma_{n}/\Sigma_{n-m}$. It follows that $F_{m}K$ admits following the alternate description
 $(F_mK)_n= {\II}(\mm,\nn)_+\wedge K\wedge T^{n-m}$. This second description of the free spectra $F_{m}K$ is important for Lemma \ref{lem:mainpt}, which is the key to showing that Shipley's detection functor $D$, defined below, preserves stable equivalences. 
\begin{definition}
Let $X$ be a symmetric motivic spectrum. Define a functor $\mcal{D}_{X}:\II\to \spt_{T}^{\Sigma}(S)$ as follows. On objects, $\mcal{D}_{X}(\nn) = \Omega_{T}^{n}\LL'F_{0}X_{n}$. Every morphism in $\II$ is a composition of a standard inclusion and an isomorphism. 
For an isomorphism $\alpha:\mm\iso\mm$, define $\mcal{D}_{X}(\alpha)$ to be the composite of the conjugation action on the loop coordinates together with the action on $X_{m}$. 
For a standard inclusion $\iota:\nn\subseteq \mathbf{n+k}$, define $\mcal{D}_{X}(\iota):\Omega_{T}^{n}\LL'F_{0}X_{n}\to \Omega_{T}^{n+k}\LL'F_{0}X_{n+k}$ to be  $\Omega_{T}^{n}\LL'$ applied to the composite induced by the structure map
$$
 F_{0}X_{n} \to F_{0}\Omega_{T}^{k} X_{n+k} \to \Omega_{T}^{k}F_{0}X_{n+k}
$$ 
followed by the natural map $\LL'\Omega_{T}^{k}F_{0}X_{n+k}\to \Omega_{T}^{k}\LL'F_{0}X_{n+k}$.

\textit{Shipley's detection functor} 
$D:\sspt_{T}(S)\to \sspt_{T}(S)$
is defined by
$$
DX = \hocolim_{\II} \mcal{D}_{X},
$$
where $\hocolim$ is defined via a bar construction model (see Definition \ref{def:hocolim}).
\end{definition}

Write $\N\subseteq \II$ for the subcategory whose objects are  $\nn$ and nonidentity morphisms are the standard inclusions. Colimit diagrams over $\N$ are homotopy colimit diagrams.
 Note that $(\colim_{\N}\mcal{D}_{X})_n = \colim_k \Omega^k_T\LL'\Sigma^n_TX_k$, where the transition maps are induced by the structure maps of $X$, which is a model for the fibrant replacement in $\spt_{T}(S)$. 


Write $\omega$ for the set of natural numbers and $\II_{\omega}$
for the category whose objects are the finite sets $\nn$ together with  $\omega$ and whose morphisms are injections. Given a functor ${F}:\II\to \ssetb$ write 
$\Lhk{F}:\II_{\omega}\to \ssetb$ for the left homotopy Kan extension of ${F}$. 
Let $M\subseteq \II_{\omega}$ be the full subcategory containing the object $\omega$. 
Shipley attributes the following useful proposition to J. Smith, which we restate in the setting of motivic spectra for convenience.
\begin{proposition}[{\cite[Proposition 2.2.9]{Shipley:THH}}]\label{prop:smith}
 Let ${F}:\II\to \sspt_{T}(S)$ be a functor. Then there are natural weak equivalences
\begin{enumerate}
\item $\displaystyle{\hocolim_{M}\Lhk{F}(\omega) \xrightarrow{\wkeq} \hocolim_{\II_{\omega}} {F}}$, and
\item $\displaystyle{\hocolim_{\N}{F}\xrightarrow{\wkeq} \Lhk{F}(\omega)}$.
\end{enumerate}
\end{proposition}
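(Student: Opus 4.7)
The plan is to recast both parts as homotopy cofinality statements, reducing to explicit verifications of contractibility of slice categories. By Lemma \ref{lem:hocolim}, once the maps in question are exhibited as induced by a functor between indexing categories, and once that functor is shown to be homotopy cofinal, the comparison on bar constructions will be an equivalence.

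For part (2), I would use the standard bar-construction description of the homotopy left Kan extension,
$$
\Lhk F(\omega) \simeq \hocolim_{(\nn,\alpha) \in \II\downarrow\omega} F(\nn),
$$
so that the natural comparison map is induced by the evident functor $\N \to \II\downarrow\omega$ sending $\nn$ to $(\nn, \iota_\nn)$, where $\iota_\nn \colon \nn \hookrightarrow \omega$ is the standard inclusion. I would verify homotopy cofinality by computing slice categories: for a fixed $(\mm, \alpha)$, the slice $(\mm,\alpha) \downarrow \N$ consists of factorizations $\mm \xrightarrow{\alpha'} \nn \xrightarrow{\iota_\nn} \omega$ of $\alpha$ through a standard inclusion. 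Such a factorization exists and the map $\alpha'$ is uniquely determined by $\alpha$ as soon as $n \geq \max \alpha(\mm)$, and morphisms between such factorizations are again standard inclusions of $\N$. Hence the slice is equivalent to the filtered poset $\{n \in \N : n \geq \max \alpha(\mm)\}$, whose nerve is contractible.

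For part (1), I would first identify $\hocolim_{\II_\omega} F$ with $\hocolim_{\II_\omega} \Lhk F$, using that $\Lhk F$ restricts on $\II$ to $F$ up to equivalence together with Lemma \ref{lem:hocolim}. Then the claim reduces to showing that the inclusion $M \hookrightarrow \II_\omega$ is homotopy cofinal. An object of the slice $\nn \downarrow M$ is an injection $\nn \hookrightarrow \omega$, with morphisms self-injections of $\omega$ intertwining these. Contractibility can be established via a shift argument: the full subcategory on ``shifted standard'' injections $i \mapsto k+i$ ($k \geq 0$) is cofinal in $\nn \downarrow M$ and is equivalent to the filtered poset $\N$, since any injection $\nn \hookrightarrow \omega$ admits a self-injection of $\omega$ carrying it past any prescribed shift. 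A dual computation handles the case $\nn = \omega$, where $\omega \downarrow M = M$ has an initial object (the identity) up to the same shift argument.

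The main obstacle I expect is the careful justification that these classical cofinality arguments, usually phrased for homotopy colimits of simplicial sets, transfer to our setting of motivic symmetric spectra. Since our homotopy colimit (Definition \ref{def:hocolim}) is defined via a bar construction formed from coproducts of values of the input functor, and since diagonals and coproducts preserve both level and stable equivalences (Theorem \ref{thm:diag} and Lemma \ref{lem:hocolim}), the classical Bousfield--Kan cofinality argument passes through: applied schemewise and levelwise it yields an equivalence of diagonals, which is in particular a stable equivalence in $\sspt_T(S)$.
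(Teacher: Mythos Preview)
Your proposal is correct in substance, but it does considerably more than the paper's own proof. The paper's argument is a single sentence: since homotopy colimits in $\sspt_T(S)$ (as defined via the bar construction in Definition~\ref{def:hocolim}) are computed levelwise and schemewise, the statement reduces immediately to the corresponding statement for functors $\II \to \ssetb$, which is exactly \cite[Proposition 2.2.9]{Shipley:THH}. Your final paragraph contains precisely this observation, and that alone suffices.

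The rest of your proposal amounts to reproving the cited result of Shipley via explicit cofinality arguments. This is a legitimate and essentially correct route, and your treatment of part~(2) is clean. Your argument for part~(1), however, is somewhat sketchy: the claim that the subcategory of ``shifted standard'' injections in $\nn\downarrow M$ is equivalent to the filtered poset $\N$ is not literally true, since there are many self-injections of $\omega$ intertwining two given shifts, so the subcategory is not a poset. The slice $\nn\downarrow M$ is indeed contractible, but the usual arguments (e.g.\ a simplicial homotopy induced by an appropriate shift endofunctor, or the observation that $M$ contains an injection with infinite complement and hence $BM\simeq *$) require a bit more care than you have indicated. None of this is needed if you simply invoke the reduction to simplicial sets and cite Shipley, as the paper does.
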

\begin{proof}
Homotopy colimits of motivic spectra are computed level and objectwise and so
 the result proved in loc.~cit.~ for functors $F:\II\to \ssetb$ applies here as well.  
\end{proof}

For each $\mm$ and motivic spectrum $W$, define $\free_{m}(W):\II\to \spt_{T}^{\Sigma}(S)$ by 
$$
\free_{m}(W)(\nn)= {\II}(\mm,\nn)_+\wedge W.
$$ 
This is a freely generated $\II$-diagram and the functor $W\mapsto \free_{m}(W)$ is left adjoint to evaluation at $\mm$.

\begin{lemma}\label{lem:mainpt}
 Let $K$ be a based motivic space. There is a natural map of motivic symmetric spectra 
$$
\Omega^{m}_{T}\mcal{L}'F_{0}K\xrightarrow{\wkeq} DF_{m}K
$$
which is a $U$-equivalence.
\end{lemma}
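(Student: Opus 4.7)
The plan is to use Proposition~\ref{prop:smith} to transport the question from the unwieldy $\II$-indexed homotopy colimit defining $DF_mK$ to a sequential colimit over $\N$, which by the identification $(\colim_\N \mcal{D}_X)_n = \colim_k \Omega^k_T\LL'\Sigma^n_T X_k$ models a stable fibrant replacement of $UF_mK$ in $\spt_T(S)$, and then to verify that the resulting spectrum-level map is a $U$-equivalence.

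First, I would construct the natural map. The alternate description $(F_mK)_m = \II(\mm,\mm)_+ \wedge K = (\Sigma_m)_+\wedge K$ admits a basepoint inclusion $S^0 \to (\Sigma_m)_+$ picking out the identity permutation, giving a natural map $K \to (F_mK)_m$. Applying $\Omega_T^m\LL' F_0$ produces $\Omega_T^m\LL' F_0 K \to \Omega_T^m\LL' F_0(F_mK)_m = \mcal{D}_{F_mK}(\mm)$, and composing with the canonical inclusion of the $\mm$-indexed summand of the bar construction defining $DF_mK$ yields the natural map of the lemma.

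Second, Proposition~\ref{prop:smith} assembles into the zig-zag
\[
\hocolim_\N \mcal{D}_{F_mK} \xrightarrow{\wkeq} \Lhk\mcal{D}_{F_mK}(\omega) \xleftarrow{\wkeq} \hocolim_M \Lhk\mcal{D}_{F_mK}(\omega) \xrightarrow{\wkeq} \hocolim_{\II_\omega}\mcal{D}_{F_mK}.
\]
Combined with the fact that $\II\hookrightarrow \II_\omega$ is homotopically cofinal on $\mcal{D}_{F_mK}$, this links $DF_mK = \hocolim_\II \mcal{D}_{F_mK}$ with $\hocolim_\N\mcal{D}_{F_mK}$ by natural level equivalences, which in particular are $U$-equivalences by Lemma~\ref{lem:hocolim}. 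Thus it suffices to show that the natural map $\Omega_T^m\LL' F_0 K \to \hocolim_\N\mcal{D}_{F_mK}$ is a $U$-equivalence.

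Third, this final comparison is purely spectrum-level. Using the explicit formula recalled above, $(\colim_\N\mcal{D}_{F_mK})_n = \colim_k \Omega_T^k\LL'\Sigma_T^n(F_mK)_k$, which by construction of $F_mK$ is precisely the $n$th space of a stably fibrant $\Omega_T$-spectrum model for $UF_mK$ in $\spt_T(S)$. On the other hand, $U(\Omega_T^m\LL' F_0 K)$ is level fibrant and represents the same underlying stable object $\Sigma_T^{-m}\Sigma_T^\infty K$ in $\SH(S)$, since $\Omega_T^m$ is stably inverse to $\Sigma_T^m$. Under these identifications the natural map of step one corresponds to the canonical unit of stable fibrant replacement, which is a stable equivalence in $\spt_T(S)$, i.e.~a $U$-equivalence.

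The main obstacle will lie in the second step: one must rigorously justify that the cofinality and the self-injection action-triviality implicit in passing from $\hocolim_{\II_\omega}$ to $\hocolim_\II$ are valid for the functor $\mcal{D}_{F_mK}$, and this is exactly where Shipley's topological Blakers-Massey style connectivity arguments must be replaced by a direct spectrum-level identification, as the introductory text signals. Equally subtle is verifying that the natural map constructed in step one really matches the canonical fibrant replacement unit under the identifications of step three, rather than merely exhibiting the two sides as abstractly equivalent; tracking the permutation action inside $\mcal{D}_{F_mK}(\mm) = \Omega_T^m\LL' F_0((\Sigma_m)_+\wedge K)$ through the bar construction is the heart of this bookkeeping.
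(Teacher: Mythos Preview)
Your proposal correctly constructs the map and correctly identifies where the difficulty lies, but it does not close the gap it names. The zig-zag you extract from Proposition~\ref{prop:smith} does \emph{not} link $\hocolim_{\II}\mcal{D}_{F_mK}$ to $\hocolim_{\N}\mcal{D}_{F_mK}$ by equivalences: the proposition identifies $\hocolim_{\II}F$ (via $\II_{\omega}$) with $\hocolim_{M}\Lhk F(\omega)$ and identifies $\Lhk F(\omega)$ with $\hocolim_{\N}F$, but the remaining comparison $\hocolim_{M}\Lhk F(\omega)\wkeq\Lhk F(\omega)$ holds only when the $M$-action on $\Lhk F(\omega)$ is homotopically trivial. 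You flag this as ``the main obstacle'' and note it is where Shipley's connectivity arguments enter, yet you offer no replacement argument---and the paper's introduction explicitly says those spacewise arguments (Blakers--Massey, Freudenthal) fail motivically. Your step~3 then inherits the problem: even if both sides are abstractly $\Sigma_{T}^{-m}\Sigma_{T}^{\infty}K$, you still owe a proof that your particular map realizes the equivalence, which you also leave open. As written, the proposal is an outline with its central step missing.

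The paper's proof takes a genuinely different route that never passes through $\hocolim_{\N}$ or any action-triviality statement. The map $\Omega_{T}^{m}\LL'F_{0}K\to\mcal{D}_{F_mK}(\mm)$ you built is adjoint to a map of $\II$-diagrams $\free_{m}(\Omega_{T}^{m}\LL'F_{0}K)\to\mcal{D}_{F_mK}$, where $\free_{m}(W)(\nn)=\II(\mm,\nn)_{+}\wedge W$ is the free $\II$-diagram generated at $\mm$. Two observations finish the proof. First, the homotopy colimit over $\II$ of a free diagram $\free_{m}(W)$ is just $W$, so the left side already computes $\Omega_{T}^{m}\LL'F_{0}K$. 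Second, at each $\nn$ with $n\geq m$ the comparison map factors as a finite-wedge-versus-product map (a $U$-equivalence since finite sums and products agree in $\mathrm{Ho}(\spt_{T}(S))$) followed by an instance of the unit $\id\to\Omega_{T}^{n-m}(-\wedge T^{n-m})$ (a $U$-equivalence since $T$-suspension is stably invertible). These are the spectrum-level replacements for Shipley's connectivity estimates. Lemma~\ref{lem:hocolim} then passes the objectwise $U$-equivalence to $\hocolim_{\II}$, and the lemma follows.
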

\begin{proof}
The inclusion $K\to \II(\mm,\mm)_{+}\wedge K$ as the wedge summand corresponding to the identity induces the map $\Omega^{m}_{T}\LL'F_{0}K\to \Omega^{m}_{T}\LL'F_{0}(\II(\mm,\mm)_{+}\wedge K)= \mcal{D}_{F_{m}K}(\mm)$.
The adjoint is a map of $\II$-diagrams 
$$
\free_{m}(\Omega^{m}_{T}\LL'F_{0}K)\to \mcal{D}_{F_{m}K}.
$$ 
For $\nn$ in $\II$, with $n\geq m$, this map is a composition 
%
%
%
\begin{align*}
 {\II}(\mm,\nn)_+  \wedge \Omega^m_{T}\LL'( F_{0}K)  & \to  
\Omega^{m}_{T}\LL'(F_{0}(\II(\mm,\nn)_+\wedge K)) \\
& \to\Omega^{m}_{T}\Omega_{T}^{n-m}\LL'F_{0}(\II(\mm,\nn)_+\wedge K\wedge T^{n-m}).
\end{align*}
The first map is the map which on the factor indexed by $\alpha:\mm\to\nn$ is induced by
by including $K\to \II(\mm,\nn)_{+}\wedge K$ as the $\alpha$th wedge summand. 
The second map is obtained from the natural transformation $\id\to \Omega_{T}^{n-m}(-\wedge T^{n-m})$.

The first map is an equivalence on underlying $T$-spectra as finite products and coproducts agree in $\mathrm{Ho}(\spt_{T}(S))$. 
The second map is  isomorphic in $\mathrm{Ho}(\spt_{T}(S))$ to an iterated application of the transformation $\id\to \Omega_{T}\Sigma_{T}$, thus a $U$-equivalence.

Now, since the map $\free(\Omega^{m}_{T}\mcal{L}'F_{0}K)(\nn)\to \mcal{D}_{F_{m}K}(\nn)$ is a $U$-equivalence for all $n\geq m$, the induced map on homotopy colimits is a $U$-equivalence. But the homotopy colimit over a free diagram is equivalent to the colimit  and so we obtain the desired $U$-equivalences
$\Omega^{m}_{T}\mcal{L}'F_{0}K \xrightarrow{\wkeq} \hocolim_{\II}\free_{m}(\Omega^{m}_{T}\mcal{L}'F_{0}K)\xrightarrow{\wkeq} D_{F_{m}K}$.
\end{proof}

\begin{theorem}\label{thm:Dprop}
The functor $D$ has the following properties.
\begin{enumerate}
\item For any $X$, $DX$ is semi-stable.
\item If $f:X\to Y$ is a $U$-equivalence, then $DX\to DY$ is a level-equivalence. 
\item If $f$  is a stable equivalence then $Df$ is a $U$-equivalence.
\item There is a natural zig-zag of $U$-equivalences relating $X^{\fib}$ and $DX$. 
\end{enumerate}
\end{theorem}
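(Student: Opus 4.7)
My plan is to follow Shipley's strategy from \cite{Shipley:THH}, with Lemma \ref{lem:mainpt} serving as the spectrum-level substitute for her spacewise connectivity arguments (Blakers-Massey and Freudenthal), which are unavailable motivically. I will handle the parts in the order (1)+(4), then (2), then (3), since later parts build on earlier ones.

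The crucial first step is to compute $U(DX)$ up to $U$-equivalence. Since $U$ preserves homotopy colimits and $\Omega_T$, we have $U(DX)_k=\hocolim_{\II}\Omega_T^n\LL'(X_n\wedge T^k)$. Via Proposition \ref{prop:smith} together with Lemma \ref{lem:mainpt} propagated cellularly from the free spectra comprising a cofibrant model of $X$, this is identified with $\hocolim_\N\mcal{D}_X$ at level $k$, namely $\colim_n\Omega_T^n\LL'(X_n\wedge T^k)$, a standard model for the stable fibrant replacement of $UX$ at level $k$ (using that $X\wedge T^k$ is stably equivalent to the $k$-fold shift of $X$). This yields the natural zig-zag $X^{\fib}\xleftarrow{\wkeq}\hocolim_\N\mcal{D}_X\xrightarrow{\wkeq}DX$ of $U$-equivalences, proving (4). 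Semi-stability (1) follows since $U(DX)$ is level-fibrant and agrees with $U(X^{\fib})$. For (2), the same identification shows $U(DX)_k$ depends only on the underlying $T$-spectrum of $X\wedge T^k$ up to stable equivalence; since a $U$-equivalence $X\to Y$ remains a stable equivalence after smashing with $T^k$, the induced map $U(DX)\to U(DY)$ is a level equivalence, yielding (2).

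For (3), factor a stable equivalence as a stable trivial cofibration $i$ followed by a stable trivial fibration $p$. By Lemma \ref{lem:jinj}, $p$ is a level trivial fibration, so a level equivalence, so a $U$-equivalence handled by (2). For $i$, the small object argument combined with Lemma \ref{lem:hocolim} and Proposition \ref{prop:hopush} reduces to showing $D$ sends each generator in $J^{\Sigma}$ to a $U$-equivalence: the $J^{proj}$ generators are level equivalences, handled by (2), while the stabilization generators $\lambda_n^{X_+}\Box(\partial\Delta^k_+\to\Delta^k_+)$ become, via Lemma \ref{lem:mainpt}, the natural loop-stabilization map $\Omega_T^{n+1}\LL'F_0(X_+\wedge\Delta^k_+\wedge T)\to\Omega_T^n\LL'F_0(X_+\wedge\Delta^k_+)$, which is a stable equivalence of $T$-spectra. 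The main obstacle is the cellular propagation of Lemma \ref{lem:mainpt} to general $X$ in step one: one must argue that the $\Sigma_\infty$-orbit correction appearing in Proposition \ref{prop:smith} collapses on underlying $T$-spectra, which holds for free spectra by Lemma \ref{lem:mainpt} and extends by a cellular induction using Lemma \ref{lem:hocolim} and Proposition \ref{prop:hopush}.
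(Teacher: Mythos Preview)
Your argument for (3) is essentially the paper's argument and is fine. The problem lies in your treatment of (4), on which you then hang (1) and (2).

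You claim a natural zig-zag of $U$-equivalences $X^{\fib}\xleftarrow{\wkeq}\hocolim_{\N}\mcal{D}_X\xrightarrow{\wkeq}DX$. The right-hand map is not a $U$-equivalence in general, and Lemma~\ref{lem:mainpt} does not establish it even for free spectra. That lemma identifies $DF_{m}K$ with $\Omega_{T}^{m}\LL'F_{0}K$; it says nothing about $\hocolim_{\N}\mcal{D}_{F_{m}K}$, which is the naive fibrant replacement of $UF_{m}K$. These two objects disagree exactly because $F_{m}K$ is not semistable for $m\geq 1$ (already classically, $F_{1}S^{0}$ is the standard example of a non-semistable symmetric spectrum). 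Concretely, $(\hocolim_{\N}\mcal{D}_{F_{1}S^{0}})_{k}=\colim_{n}\Omega_{T}^{n}\LL'(\nn_{+}\wedge T^{n-1+k})$ has unbounded wedge summands, while $U(\Omega_{T}\LL'F_{0}S^{0})_{k}=\Omega_{T}\LL'T^{k}$ does not. So the ``$\Sigma_{\infty}$-orbit correction'' in Proposition~\ref{prop:smith} does \emph{not} collapse on underlying $T$-spectra for free spectra, and there is nothing to propagate cellularly. The same misidentification undermines your proof of (2): what Proposition~\ref{prop:smith} actually gives is that $(DX)_{k}\wkeq\hocolim_{M}\Lhk(\mcal{D}_{X})_{k}(\omega)$, and one must keep the $\hocolim_{M}$; a $U$-equivalence $X\to Y$ makes the $M$-objects $\Lhk(\mcal{D}_{X})_{k}(\omega)\wkeq\hocolim_{\N}(\mcal{D}_{X})_{k}$ equivalent, hence their $M$-homotopy orbits are equivalent. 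That is the paper's proof of (2).

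The paper avoids your circularity by proving (1) directly: the $\Sigma_{n}$-action on $(DX)_{n}=\hocolim_{k\in\II}\Omega_{T}^{k}\LL'\Sigma_{T}^{n}X_{k}$ is entirely through the suspension coordinates, so even permutations act trivially in $\hb(S)$, and one invokes \cite[Proposition~3.2]{RSO:strict}. For (4), the paper introduces $MX=\hocolim_{\II}\Omega_{T}^{n}\LL'\shift_{n}X$ and maps $X\to MX\leftarrow DX$; applied to $X^{\fib}$ these become level equivalences (since $X^{\fib}$ is an $\Omega_{T}$-spectrum), and $DX\to DX^{\fib}$ is a $U$-equivalence by (3), already established. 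Thus the correct logical order is (1), (2), (3), then (4) using (3), not the reverse.
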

\textit{Proof of (1).}
 The $n$th level of $DX$ is $\hocolim_{k\in \II}\Omega_{T}^{k}\mcal{L}'\Sigma_{T}^{n}X_{k}$, with the $\Sigma_{n}$-action coming from that on the coordinates of $\Sigma_{T}^{n}$. As the action of an even permutation on a sphere is trivial in $\hb(k)$, it follows from 
\cite[Proposition 3.2]{RSO:strict} that $DX$ is semi-stable.

\textit{Proof of (2).}
This follows immediately from Proposition \ref{prop:smith} together with the fact that $\hocolim_{\N}\mcal{D}_{X}$ is a fibrant replacement functor on $\spt_{T}(S)$.

\textit{Proof of (3).}
Suppose that $X\to Y$ is a stable equivalence. We may factor it as a stable trivial cofibration followed by a level trivial fibration. As $D$ preserves level equivalences, it is enough to show that $D$ takes stable trivial cofibrations to $U$-equivalences. 
Trivial cofibrations are obtained as retracts of sequential colimits of pushouts of generating trivial cofibrations. Retracts and sequential colimits preserve $U$-equivalences. This means that we need to show that $U$ sends pushouts along cofibrations to homotopy pushouts and sends generating trivial cofibrations to $U$-equivalences.

Let $Y$ be the pushout of $X\leftarrow A \to B$, where $A\to X$ is a cofibration.  For each $n$, the diagram  
$$
\xymatrix{
F_{0}A_{n} \ar[r]\ar[d] & F_{0}B_{n} \ar[d] \\
F_{0}X_{n} \ar[r] &  F_{0}Y_{n}
}
$$ 
is a homotopy pushout (pushouts are formed levelwise, thus $Y_{n}$ is the pushout in motivic spaces of $X_{n}\leftarrow A_{n}\rightarrow B_{n}$, $F_{0}$ preserves pushouts, the resulting square is a homotopy pushout by Proposition \ref{prop:hopush}). The functor $\Omega_{T}^{n}\mcal{L}'$ preserves homotopy pushouts. Since homotopy colimits commute, we conclude that 
$$
\xymatrix{
DA \ar[r]\ar[d] & DB \ar[d] \\
DX \ar[r] &  DY
} 
$$
is a homotopy pushout.

Recall the set $J^{\Sigma}$, see (\ref{eqn:jinj}), of generating trivial cofibrations. Some of the maps in this set are level equivalences, which are preserved by $D$. 
If $K$ is a motivic space, then Lemma \ref{lem:mainpt} implies that 
$D(F_{n+1}(K\wedge T)) \to D(F_{n}(K))$ is a $U$-equivalence. From this and that $D$ is compatible with homotopy pushouts, it follows that $D(\lambda_{n}^{X_+}\,\Box\,g)$ is a $U$-equivalence.

\textit{Proof of (4).} Define $MX = \hocolim_{\II}\Omega_T^n\LL'\shift_n X$.
We have a natural inclusion $X\to MX$ and the map $F_{0}X_n\to \shift_n X$ induces the natural map $DX\to MX$.  
 We thus have a natural diagram
$$
\xymatrix{
X \ar[r]\ar[d] & MX \ar[d] & DX \ar[l]\ar[d] \\
X^{\fib} \ar[r] & MX^{\fib} & DX^{\fib}. \ar[l]
}
$$ 
The right vertical arrow is a $U$-equivalence by the previous part. Since $X^{\fib}$ is an $\Omega_{T}$-spectrum, it is not hard to see that the bottom arrows are level equivalences. This provides the desired zig-zag of $U$-equivalences relating $X^{\fib}$ and $DX$. 
\qed

\begin{theorem}\label{thm:laxfib}
The functor $D$ is lax symmetric monoidal and $\mcal{L}'D^{2}X$ is a lax symmetric monoidal fibrant replacement functor.
\end{theorem}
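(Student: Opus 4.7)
\emph{Step 1 (Lax symmetric monoidal structure on $D$).} The category $\II$ carries a symmetric monoidal structure via concatenation $(\mm,\nn)\mapsto \mm\oplus\nn = \mathbf{m+n}$, with symmetry the shuffle permutation. First I would endow $\mcal{D}_{(-)}$ with a lax symmetric monoidal structure via natural transformations
$$
\mcal{D}_X(\mm)\wedge \mcal{D}_Y(\nn) = \Omega^m_T\mcal{L}' F_0 X_m \wedge \Omega^n_T\mcal{L}' F_0 Y_n \longrightarrow \Omega^{m+n}_T\mcal{L}' F_0(X\wedge Y)_{m+n} = \mcal{D}_{X\wedge Y}(\mm\oplus\nn),
$$
built by composing (i) the canonical comparison $\Omega^m_T A\wedge \Omega^n_T B\to \Omega^{m+n}_T(A\wedge B)$, (ii) the given lax symmetric monoidal comparison for $\mcal{L}'$, (iii) the strong monoidal isomorphism $F_0 A\wedge F_0 B \cong F_0(A\wedge B)$, and (iv) the multiplication $X_m\wedge Y_n\to (X\wedge Y)_{m+n}$ intrinsic to the smash product of symmetric spectra; coherence of each ingredient yields coherence of the composite. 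Then I would invoke the standard fact that the bar construction homotopy colimit over a symmetric monoidal small category is lax symmetric monoidal with respect to concatenation (a formal consequence of its description as a coend together with the fact that $\oplus\colon \II\times \II\to \II$ is a symmetric monoidal functor). Composing these pieces produces a coherent natural transformation $DX\wedge DY \to D(X\wedge Y)$.

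\emph{Step 2 (Fibrancy of $\mcal{L}'D^2 X$).} By Theorem \ref{thm:Dprop}(1), $DX$ is semistable, so $DX\to (DX)^{\fib}$ is a $U$-equivalence, and Theorem \ref{thm:Dprop}(2) then yields a level equivalence $D^2X\to D((DX)^{\fib})$. Since $(DX)^{\fib}$ is an $\Omega_T$-spectrum, the transition maps in the $\II$-diagram $\mcal{D}_{(DX)^{\fib}}$ are stable equivalences of motivic $T$-spectra; combined with Lemma \ref{lem:hocolim} and Proposition \ref{prop:smith}, this identifies the underlying $T$-spectrum of $D((DX)^{\fib})$ with a stably fibrant model of $(DX)^{\fib}$. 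Applying $\mcal{L}'$ then gives a level-fibrant symmetric spectrum whose underlying $T$-spectrum is an $\Omega_T$-spectrum, so Hovey's criterion \cite[Theorem 8.8]{Hovey:Spt} (quoted in the preliminaries) forces $\mcal{L}'D^2X$ to be fibrant in $\sspt_T(S)$.

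\emph{Step 3 (Natural stable equivalence $X\to \mcal{L}'D^2X$ and monoidality).} Iterating the construction in the proof of Theorem \ref{thm:Dprop}(4), one has natural zigzags $X\to MX\leftarrow DX$. Applying $D$ once more together with $\mcal{L}'$ turns the rightward arrow in the analogous zigzag into a level equivalence by Theorem \ref{thm:Dprop}(2), and inverting this level equivalence assembles a natural transformation $X\to \mcal{L}'D^2X$; Theorem \ref{thm:Dprop}(4) guarantees that the underlying composite is a $U$-equivalence, hence a stable equivalence in $\sspt_T(S)$. Lax symmetric monoidality of $\mcal{L}'D^2$ follows from Step 1, the given lax monoidal structure of $\mcal{L}'$, and the fact that composition preserves lax symmetric monoidal structure.

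\emph{Main obstacle.} The crux is Step 3: rigidifying the zigzag of Theorem \ref{thm:Dprop}(4) into a bona fide natural transformation $X\to \mcal{L}'D^2X$ compatible with the monoidal structure of Step 1. The two applications of $D$ are precisely what is needed to convert the $U$-equivalences in the zigzag (which $D$ detects but does not invert) into level equivalences, which in turn become invertible after $\mcal{L}'$; checking that the resulting natural transformation is monoidal requires care with the concatenation-based comparison maps introduced in Step 1.
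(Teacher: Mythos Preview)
Your Step 1 is correct and is exactly the paper's argument: the lax symmetric monoidal structure on $D$ is built from the concatenation $+\colon\II\times\II\to\II$ and the pairing $\mcal{D}_X\wedge\mcal{D}_Y\to\mcal{D}_{X\wedge Y}\circ +$, then passed to homotopy colimits.

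In Step 2 you correctly use parts (1) and (2) of Theorem~\ref{thm:Dprop} to obtain a level equivalence $D^2X\to D((DX)^{\fib})$. But the second half---concluding that $UD((DX)^{\fib})$ is stably fibrant from the transition maps in $\mcal{D}_{(DX)^{\fib}}$ being stable equivalences together with Lemma~\ref{lem:hocolim} and Proposition~\ref{prop:smith}---does not work as stated. Proposition~\ref{prop:smith} only relates $\hocolim_{\II}F$ to $\hocolim_{\N}F$ through a $\hocolim_{M}$ (homotopy $\Sigma_\infty$-orbits) of the latter, and taking homotopy orbits does not preserve the $\Omega_T$-spectrum property. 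The argument implicit in the paper's ``follows from the previous theorem'' instead goes through the functor $M$ appearing in the proof of part (4): for any $\Omega_T$-spectrum $E$ the maps $E\to ME$ and $DE\to ME$ are both \emph{level} equivalences, so $DE$ is level equivalent (via a zigzag) to $E$. Applying this with $E=(DX)^{\fib}$ shows that $D^2X$ is level equivalent to the $\Omega_T$-spectrum $(DX)^{\fib}$, whence $\mcal{L}'D^2X$ is fibrant.

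Your Step 3 raises a concern the paper simply does not address: the paper's proof stops at ``$\mcal{L}'D^2X$ is a fibrant model for $X$'' and never produces a point-set natural transformation $X\to\mcal{L}'D^2X$; it is content with the natural zigzag of $U$-equivalences coming from part (4). Your proposed fix---``inverting this level equivalence''---cannot be carried out at the point-set level, so it does not yield a genuine monoidal natural transformation. In short, you have correctly identified a subtlety, but the paper is using ``fibrant replacement functor'' in the weak sense (fibrant values, lax symmetric monoidal, naturally zigzag-equivalent to the identity), and neither your proposal nor the paper supplies the strict arrow you are after.
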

\begin{proof}
 Consider the symmetric monoidal product $+:\II\times \II \to \II$ defined on objects by $(\nn,\mm)\mapsto \nn+\mm$ and by the obvious formula on morphisms. 
Given spectra $X$ and $Y$, we have an $\II\times \II$ diagram $\mcal{D}_{X}\wedge \mcal{D}_{Y}$ defined by  
$$
(\mcal{D}_{X}\wedge \mcal{D}_{Y})(\nn,\mm) = \mcal{D}_{X}(\nn)\wedge \mcal{D}_{Y}(\mm) = \Omega_T^{n}\LL'F_{0}X_{n} \wedge \Omega_T^m\LL'F_{0}Y_m .
$$ 

We have a pairing $\mcal{D}_{X}\wedge \mcal{D}_{Y} \to (\mcal{D}_{X\wedge Y})\circ +$ given by
\begin{align*}
\Omega_T^{n}\LL'F_{0}X_{n} & \wedge \Omega_T^m\LL'F_{0}Y_m \to \Omega_T^{n+m}(\LL'F_{0}X_{n} \wedge \LL'F_{0}Y_m) \\
& \to \Omega^{n+m}_T\LL' F_{0}(X_{n}\wedge Y_{m}) \to \Omega^{n+m}_T\LL' F_{0}(X\wedge Y)_{n+m}.
\end{align*}
Taking homotopy colimits, we have the natural map $DX\wedge DY \to D(X\wedge Y)$ obtained from the composite on homotopy colimits
\begin{align*}
DX\wedge DY = \hocolim_{\II}\mcal{D}_{X}  & \wedge \hocolim_{\II}\mcal{D}_{Y} \to \hocolim_{\II\times\II}\mcal{D}_{X}\wedge\mcal{D}_{Y} \\
&  \to \hocolim_{\II\times \II}(\mcal{D}_{X\wedge Y}) \circ + \to \hocolim_{\II} \mcal{D}_{X\wedge Y} =D(X\wedge Y).
\end{align*}

That $\LL'D^2X$ is a fibrant model for $X$ follows from the previous theorem. 

\end{proof}

\section{Semi-topological cohomology theories}\label{sec:sst}
In this section we 
extend  Friedlander-Walker's 
``singular semi-topological'' construction to a motivic construction $\Qsst$ that is lax symmetric monoidal. As a consequence, monoids in motivic symmetric spectra are preserved by this construction and ultimately it produces motivic strict ring spectra representing morphic cohomology, semi-topological $K$-theory, and semi-topological cobordism.

\subsection{Motivic homotopy on singular schemes}
Friedlander-Walker's constructions and techniques apply to presheaves defined on all of $\sch/\C$. Because of this, it is useful to work with motivic spaces defined on all schemes, an approach which yields the same stable motivic homotopy category in the presence of resolution of singularities. Let $k$ be a field admitting resolution of singularities. We write $\MMb(\sch/k)$ for the category of presheaves 
$F:\sch/k^{op}\to \ssetb$ and equip it with the flasque, cdh-, $\A^{1}$-local model structure.
In this setting, we sometimes write $\MMb(\sm/k)$ for the model category previously denoted $\MMb(k)$ (i.e. the model category of motivic spaces on $\sm/k$ equipped with the flasque, Nisnevich, $\A^1$-local model structure).

From now on we will take $(\P^1,\infty)$ as our motivic suspension coordinate. The model categories of $\P^1$-spectra and symmetric $\P^1$-spectra on $\sch/k$ are defined in exactly the same way as in Section \ref{sec:motivic}. We denote these categories respectively by $\spt_{\P^1}(\sch/k)$ and $\sspt_{\P^1}(\sch/k)$.
Write $i:\sm/k\subseteq \sch/k$ for the inclusion of categories. 
The Quillen pair $i^*:\MMb(k) \rightleftarrows \MMb(\sch/k):i_*$ extends to a Quillen pair on spectra is a Quillen equivalence by a theorem of Voevodsky.

\begin{theorem}
Let $k$ be a field admitting resolution of singularities. Then 
$$
i^{*}:\sspt_{\P^1}(k) \rightleftarrows \sspt_{\P^1}(\sch/k): i_*
$$
and 
$$
i^{*}:\spt_{\P^1}(k) \rightleftarrows \spt_{\P^1}(\sch/k): i_*
$$
are Quillen equivalences. Moreover, $i^*$ is strong symmetric monoidal and there exists a lax symmetric monoidal fibrant replacement functor for $\sspt_{\P^1}(\sch/k)$.
\end{theorem}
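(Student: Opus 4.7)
The plan is to verify the three claims in sequence: the Quillen equivalence promotes from motivic spaces to spectra, the functor $i^{*}$ is strong symmetric monoidal, and a lax symmetric monoidal fibrant replacement exists on $\sspt_{\P^{1}}(\sch/k)$.

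The unstable Quillen equivalence $i^{*}\colon\MMb(\sm/k)\rightleftarrows\MMb(\sch/k)\colon i_{*}$ is Voevodsky's theorem cited in the preceding paragraph. To lift it to spectra I would apply Hovey's machinery \cite{Hovey:Spt}: both model structures are proper, cellular, and monoidal, and $i^{*}$ is the left Kan extension along the fully faithful symmetric monoidal inclusion $i\colon\sm/k\hookrightarrow\sch/k$, so it sends the cofibrant suspension coordinate $(\P^{1},\infty)$ to itself, commutes with the smash product, and is strong symmetric monoidal. This last property propagates through Hovey's construction of the smash product to both $\spt_{\P^{1}}$ and $\sspt_{\P^{1}}$. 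That the induced adjunctions are Quillen equivalences reduces to the unstable case: stably fibrant objects on both sides are $\Omega_{\P^{1}}$-spectra, and because $i^{*}\P^{1}=\P^{1}$ the right adjoint $i_{*}$ commutes with $\Omega_{\P^{1}}$ and hence preserves stable fibrations; the derived unit and counit conditions can then be checked levelwise using the equivalence on motivic spaces.

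The main obstacle will be constructing the lax symmetric monoidal fibrant replacement on $\sspt_{\P^{1}}(\sch/k)$. My approach is to rerun Section \ref{sec:fibrant} verbatim with $S=\sch/k$ in place of $\sm/k$. The only prerequisites are that $\MMb(\sch/k)$ be proper, cellular, and monoidal---satisfied by the flasque cdh-$\A^{1}$-local structure---and that it admit a lax symmetric monoidal level fibrant replacement functor $\LL'$. The latter is obtained by repeating \cite[Lemma 2.2]{RSO:strict}, whose construction relies only on the small object argument applied to cellular inputs and makes no use of the Nisnevich topology, so it adapts to the cdh setting without modification. Granting these inputs, Shipley's detection functor $D$ is defined exactly as in Section \ref{sec:fibrant}, Theorem \ref{thm:Dprop} carries through because its proofs are formal consequences of the model-categorical setup and the formal properties of $\LL'$ (the key spectrum-level argument in Lemma \ref{lem:mainpt} uses only the stable homotopy category of $\P^{1}$-spectra), and Theorem \ref{thm:laxfib} then produces the desired lax symmetric monoidal fibrant replacement $\LL'D^{2}$ on $\sspt_{\P^{1}}(\sch/k)$.
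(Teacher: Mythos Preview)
Your proposal is correct and follows essentially the same approach as the paper. The paper's proof is terser---it cites \cite{V:niscdh} directly for the Quillen equivalences (noting only that the model structures used here are equivalent to the ones there), observes that $i^{*}$ is strong symmetric monoidal because the inclusion $\sm/k\subseteq\sch/k$ is, and then states that the construction of Section~\ref{sec:fibrant} works equally well for $\sspt_{\P^{1}}(\sch/k)$---but your more explicit lifting of the unstable equivalence via Hovey's machinery and your check that the inputs to Section~\ref{sec:fibrant} (properness, cellularity, the existence of $\LL'$) hold in the cdh setting simply unpack what the paper leaves implicit.
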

\begin{proof}
The Quillen equivalences follow immediately from \cite{V:niscdh} because the model structures we use are all equivalent to the one used there. The second statement follows from the fact that $\sm/k\subseteq \sch/k$ is strong symmetric monoidal. The construction of a lax symmetric monoidal fibrant replacement functor in Section \ref{sec:fibrant} works equally well for $\spt_{\P^1}(\sch/k)$. 
\end{proof}

\begin{remark}
For a motivic spectrum $E$, we have a presheaf of abelian groups $\ppsh_{s,t}E$ on $\sm/k$, $U\mapsto [\Sigma^{s,t}U_{+}, E]_{\SH(k)}$. We will use the same notation for presheaf on $\sch/k$ defined by the same formula.
\end{remark}

%
%


\subsection{Topological realization}\label{sub:top}

The functor $\Sch/\C \to \ssetb$ which sends $X$ to $\sing X(\C)_{+}$ can be extended to a functor $\RRe_{\C}:\MMb(\Sch/\C)\to \ssetb$ by 
$$
\RRe_{\C}F = \colim_{(X\times \Delta^{n})_+\to F}(\sing X(\C) \times \Delta^{n})_{+}.
$$ 
This functor has a right adjoint  
defined by $\msing_{\C}(K)(X) = \shom(\sing X(\C), K)$ (where $\shom(-,-)$ is the simplicial set of maps).

\begin{remark}
Write $\RRe_{\C}':\MMb(\sm/\C)\to \btop$ for the ``usual'' topological realization functor defined by
$\RRe_{\C}'F = \colim_{(X\times\Delta^n)_+\to F}(X(\C)\times\Delta^{n}_{top})_+$.
This is related to the topological realization considered here  by 
$\sing\L\RRe_{\C}'F \wkeq \RRe_{\C}\L i^*F$. 
\end{remark}

\begin{proposition}
 The adjoint pair
$$
\RRe_{\C}:\MMb(\Sch/\C) \rightleftarrows \ssetb: \msing_{\C}
$$
is a Quillen adjunction. Moreover $\RRe_{\C}$ is a  strong monoidal functor.
\end{proposition}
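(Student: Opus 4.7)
The plan is to separately verify the Quillen-adjunction condition and the strong monoidal claim. For the former, since $\MMb(\Sch/\C)$ is by construction the left Bousfield localization of the flasque model structure at the cdh-hypercovers and the $\A^1$-projections $\A^1\times X\to X$ for $X\in \Sch/\C$, by Hirschhorn's criterion it suffices to check (i) that $(\RRe_\C,\msing_\C)$ is Quillen for the unlocalized flasque level model structure (with $\ssetb$ given the Kan structure), and (ii) that $\L\RRe_\C$ sends the localizing maps to weak equivalences of based simplicial sets.

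For (i), I would verify at the level of generating (trivial) cofibrations. These are built from representables smashed with boundary and horn inclusions, so $\RRe_\C$ sends them to maps of the form $\sing X(\C)_+\wedge(\partial\Delta^n_+\to \Delta^n_+)$ and $\sing X(\C)_+\wedge(\Lambda^n_{k,+}\to\Delta^n_+)$, which are respectively a cofibration and an anodyne extension in $\ssetb$; the remaining flasque-type generating trivial cofibrations are handled similarly. Alternatively, one may check directly that $\msing_\C$ preserves (trivial) Kan fibrations, which follows from adjointness together with the fact that $\sing X(\C)$ is always a Kan complex.

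For (ii), the $\A^1$-case is immediate: since $\A^1(\C)=\C$ is a contractible topological space, $\sing \C$ is a contractible Kan complex, and $\RRe_\C((\A^1\times X)_+)\iso \sing\C_+\wedge \sing X(\C)_+\to \sing X(\C)_+$ is a weak equivalence. The cdh-hypercover case is the main point, and I would reduce it via cofinality with \v{C}ech-type refinements to elementary cdh squares, namely to abstract blow-ups with $Z\hookrightarrow X$ a closed immersion and $\tilde X\to X$ proper and an isomorphism over $X\setminus Z$. The classical fact that taking $\C$-points realizes such a square as a topological pushout along the closed embedding $Z(\C)\hookrightarrow X(\C)$ then implies that the induced square of $\sing(-)(\C)_+$ is a homotopy pushout of based simplicial sets, so $\L\RRe_\C$ inverts the cdh comparison map.

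The strong monoidal claim follows formally from the representable case. Since $\RRe_\C$ commutes with colimits, it is determined by its restriction to representable motivic spaces, and on these the natural isomorphism
$$
\RRe_\C(X_+)\wedge\RRe_\C(Y_+)=\sing X(\C)_+\wedge \sing Y(\C)_+\iso \sing((X\times_\C Y)(\C))_+=\RRe_\C((X\times_\C Y)_+)
$$
follows because $\sing$ preserves products (being right adjoint to geometric realization) and $X\mapsto X(\C)$ commutes with fiber products over $\spec\C$. Cocontinuity and the fact that smash in $\ssetb$ preserves colimits in each variable then extend this to a natural isomorphism $\RRe_\C(F)\wedge\RRe_\C(G)\iso \RRe_\C(F\wedge G)$ for arbitrary motivic spaces $F,G$. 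The main obstacle is step (ii): one must carefully package the topological pushout property of an abstract blow-up into a homotopy-pushout statement in $\ssetb$ with the necessary cofibrancy, and then leverage this to handle arbitrary cdh-hypercovers rather than only elementary squares.
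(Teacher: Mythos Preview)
Your approach matches the paper's: verify the Quillen pair at the global flasque level, then check that $\L\RRe_\C$ inverts the $\A^1$-projections and the cdh distinguished-square maps; the paper resolves your ``main obstacle'' exactly as you anticipate, by triangulating $Y(\C)$ with $B(\C)$ as a subcomplex (so the set-theoretic pushout $X(\C)=A(\C)\amalg_{B(\C)}Y(\C)$ is already a homotopy pushout) and then using that $\sing$ preserves homotopy colimits.

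Two small caveats. First, the cdh cd-structure is generated by \emph{both} Nisnevich distinguished squares and abstract blow-up squares, not only the latter; the paper treats the Nisnevich case separately (by reference to Dugger--Isaksen). Second, your alternative for step (i) needs care: knowing that $\sing X(\C)$ is Kan only shows that $\msing_\C$ carries (trivial) Kan fibrations to \emph{objectwise}, i.e.\ projective, (trivial) fibrations. Since the flasque structure has strictly more cofibrations than the projective one, this does not yet yield flasque (trivial) fibrations; one must still check that $\RRe_\C$ sends the additional flasque generating cofibrations to monomorphisms of simplicial sets. That is precisely your primary approach (a), and is what the paper does (phrased via Dugger's lemma as the statement that $\RRe_\C$ takes the generating cofibrations $I$ to cofibrations and the generating trivial cofibrations $J$ to weak equivalences).
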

\begin{proof}
The argument is similar to that given in \cite[Theorem A.23]{PPR:KGL}. The key point is that by standard facts about localizations of model categories it suffices to show that these are Quillen pairs on the global flasque model structure and the left adjoints send cdh-distinguished squares to homotopy pushouts  and send maps of the form $X_{+}\to (X\times \A^{1})_{+}$ to weak equivalences.

To show that the left adjoints are a Quillen pair on the global flasque model structure it suffices by Dugger's lemma \cite[Corollary A2]{dugger:simp} to show that the right adjoints of these pairs preserve trivial fibrations as well as fibrations between fibrant objects. 
This follows from the fact that $\Re_{\C}$  maps the generating cofibrations $I$ to cofibrations in $\ssetb$  maps generating trivial cofibrations $J$ to weak homotopy equivalences in $\ssetb$.

As $\A^{1}(\C)$ is contractible, $\Re_{\C}$ sends maps of the form $X_{+}\to (X\times \A^{1})_{+}$ to homotopy equivalences. That $\Re_{\C}$ sends Nisnevich distinguished squares to homotopy pushouts follows from the arguments in \cite[Section 5]{DI:hyp}.
Suppose that 
\begin{equation*}
\xymatrix{
B \ar[r]^{j}\ar[d] & Y \ar[d]^{p}\\
A \ar[r]^{i} & X
}
\end{equation*}
is a distinguished $cdh$-square. We have $X(\C) = A(\C) \coprod_{B(\C)} Y(\C)$ and  $Y(\C)$ can be triangulated so that  $B(\C)\subseteq Y(\C)$ is a subcomplex. 
This implies that $X(\C)$ is the homotopy pushout out $A(\C)\leftarrow B(\C) \rightarrow Y(\C)$. Since $\sing$ preserves homotopy colimits, see e.g. \cite[Proposition 18.9.12]{Hir:loc}, this implies the $\Re_{\C}$ sends distinguished square to homotopy pushouts.

The last statement is a simple consequence of the standard fact that there is a natural homeomorphism $(X\times Y)(\C) \iso X(\C)\times Y(\C)$.
\end{proof}

Write $\tilde{S}^2 = \RRe_{\C}(\P^{1}) = \sing \C\P^1$. There is a canonical weak equivalence $S^{2}\wkeq \tilde{S}^{2}$.
The categories of spectra $\spt_{\tilde{S}^{2}}(\ssetb)$ and symmetric spectra
$\sspt_{\tilde{S}^{2}}(\ssetb)$ are equipped with stable model structures using \cite{Hovey:Spt} and the resulting homotopy categories are equivalent (as closed symmetric monoidal triangulated categories) to the usual stable homotopy category of $S^{1}$-spectra. 

If $E$ is a $\P^1$-spectrum, define the $\tilde{S^2}$-spectrum $\RRe_{\C}E$ by $(\RRe_{\C}E)_{i} = \RRe_{\C}E_i$ with structure maps $\RRe_{\C}E_i\wedge \RRe_{\C} (\P^1) = \RRe_{\C}(E_{i}\wedge \P^1) \to \RRe_{\C}E_{i+1}$. The functor $\msing_{\C}$ extends as well to a functor on $\tilde{S^2}$-spectrum. We obtain an adjoint pair of functors
 $\RRe_{\C}:\sspt_T(\sch/\C) \rightleftarrows \sspt_{\tilde{S}^{2}}(\ssetb):\msing_{\C}$
and similarly for ordinary spectra.

\begin{theorem}
The adjoint pairs
 $\RRe_{\C}:\sspt_T(\sch/\C) \rightleftarrows \sspt_{\tilde{S}^{2}}(\ssetb):\msing_{\C}$
and $\RRe_{\C}:\spt_T(\sch/\C) \rightleftarrows \spt_{\tilde{S}^{2}}(\ssetb):\msing_{\C}$
are Quillen adjoint pairs. In the first case, $\RRe_{\C}$ is strict symmetric monoidal.
\end{theorem}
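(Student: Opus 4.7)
The plan is to bootstrap from the space-level Quillen adjunction $\RRe_{\C}:\MMb(\sch/\C) \rightleftarrows \ssetb: \msing_{\C}$ established in the previous proposition, using Hovey's general machinery \cite{Hovey:Spt} for extending Quillen adjunctions on pointed monoidal model categories to the associated categories of spectra and symmetric spectra.

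First I would verify that the levelwise extension really is an adjunction. Since $\RRe_{\C}$ is strong monoidal with $\RRe_{\C}(\P^{1}) = \tilde{S}^{2}$, the formula $(\RRe_{\C}E)_{i} = \RRe_{\C}E_{i}$ has well-defined structure maps
$$\RRe_{\C}E_{i}\wedge\tilde{S}^{2} = \RRe_{\C}(E_{i}\wedge\P^{1}) \to \RRe_{\C}E_{i+1};$$
in the symmetric case the required $\Sigma_{n}$-equivariance of the iterated structure maps is inherited directly from that of $E$, because $\RRe_{\C}$ is strict (not merely lax) symmetric monoidal on spaces. Compatibility of the levelwise adjunctions with these structure maps, via the natural identification $\msing_{\C}\Omega_{\tilde{S}^{2}}K \iso \Omega_{\P^{1}}\msing_{\C}K$ coming from the space-level adjunction, produces the spectrum-level adjunctions in both the ordinary and symmetric settings.

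For the Quillen property I would invoke \cite[Theorem 5.7]{Hovey:Spt}: given a Quillen pair on pointed model categories together with cofibrant suspension coordinates on each side and a weak equivalence $\RRe_{\C}(\P^{1}) \wkeq \tilde{S}^{2}$ (here in fact an equality), the induced adjunctions on spectra and symmetric spectra are Quillen for both the level and the stable model structures. Equivalently, one can check directly that $\msing_{\C}$ preserves level fibrations and level trivial fibrations from the space-level statement, and that it sends $\Omega_{\tilde{S}^{2}}$-spectra to $\Omega_{\P^{1}}$-spectra using the identification above; this reduces the stable pair to the level pair.

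Finally, the strict symmetric monoidal property of $\RRe_{\C}$ on symmetric $\P^{1}$-spectra follows from strict monoidality at the space level: the smash product on $\sspt_{\P^{1}}$ is built from the smash product on $\MMb$ via coequalizers and pushouts involving the sphere symmetric spectrum, and $\RRe_{\C}$ preserves all of these colimits and sends the $\P^{1}$-sphere to the $\tilde{S}^{2}$-sphere on the nose. I anticipate no serious technical obstacle, since none of the usual subtleties (such as lax vs.\ strong monoidality, or choosing coherent natural transformations between $\RRe_{\C}(-\wedge\P^{1})$ and $\RRe_{\C}(-)\wedge\tilde{S}^{2}$) arise here; the only real bookkeeping is verifying $\Sigma_{n}$-equivariance on realizations of iterated structure maps, which is immediate from strict monoidality.
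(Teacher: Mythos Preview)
Your proposal is correct and follows the same approach the paper intends: the paper's own proof is the single line ``One may argue as in \cite[Theorem A.45]{PPR:KGL}'', and what you have written is precisely such an argument---bootstrapping the space-level Quillen adjunction to the level model structures on (symmetric) spectra and then to the stable model structures via Hovey's general machinery, with strict monoidality of $\RRe_{\C}$ on symmetric spectra following from strict monoidality on spaces together with preservation of colimits. The only minor caution is to double-check the exact numbering of the result you cite from \cite{Hovey:Spt}; the functoriality statements for ordinary and symmetric spectra appear in separate sections there.
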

\begin{proof}
 One may argue as in \cite[Theorem A.45]{PPR:KGL}.
\end{proof}

\subsection{Friedlander-Walker's construction}
 
Let $F:\Sch/\C^{op}\to \sSet$ be a presheaf of simplicial sets. 
Let $T$ be a topological space. Define $F(X\times T)$ to by the left Kan extension along $\Sch/\C\to \Top$. That is, $F(X\times T)  = \colim_{T\to Y(\C)}F(X\times Y)$
where the indexing category is the filtered category whose objects are continuous maps 
$T\to Y(\C)$ and whose morphisms are maps of schemes making the obvious triangle commute. 
Now define
$$
F^{\sst}(X) = \diag(d\mapsto F(X\times\Delta^{d}_{top})).
$$
If $F$ is based, then so is $F^{\sst}$. 
If $F=X_+$ is representable, then it is not hard to see that $F(\Delta^{\bullet}_{top}) = \sing X(\C)_+$. As $(-)^{\sst}$ commutes with colimits,  $F^{\sst}$ may also be described as the presheaf
$$
F^{\sst}(X)= \RRe_{\C}(\ihom(X,F)).
$$


Let $E$ be a $\P^1$-spectrum or symmetric $\P^1$-spectrum on $\sch/\C$. Define 
\begin{equation}\label{eqn:fundel}
F(\Delta^{d}_{top},E) = \colim_{\Delta^{d}_{top}\to W(\C)} F(W,E)
\end{equation}
where $F(-,E)$ is the function spectrum. 
We have a simplicial object $F(\Delta^{\bullet}_{top}, E)$ in $\Spt_{\P^1}^{\Sigma}(\Sch/\C)$ and we define
$$
E^{\sst} = |F(\Delta^{\bullet}_{top},E)|.
$$
Note that $E^{\sst}$ is equivalently  described as
$E^{\sst}= (E_{0}^{\sst}, E_{1}^{\sst}, \ldots)$ with structure maps given by
$ E_{i}^{\sst}\wedge \P^1 \to  E_{i}^{\sst}\wedge (\P^1)^{\sst} = (E_{i}\wedge \P^1)^{\sst} \to E_{i+1}^{\sst}$.

Fix a lax symmetric monoidal fibrant replacement functor $(-)^{\fib}$ on $\Spt^{\Sigma}_{\P^{1}}(\Sch/\C)$. 
\begin{definition}\label{def:Qsst}
For a motivic spectrum $E$ on $\sch/\C$, define $\Qsst E = (E^{\fib})^{\sst}$.
If $E$ is a motivic spectrum defined on $\sm/\C$ then define $\Qsst E = \Qsst \L i^*E$.
\end{definition}

 The \textit{semi-topological $E$-theory} is the cohomology theory represented by $\Qsst E$ in $\SH(\C)$, that is $E^{p,q}_{\sst}(X) = [ X_{+}, \Sigma^{p,q}\Qsst E]_{\SH(\C)}$.

Note also that there is a natural convergent spectral sequence 
\begin{equation}\label{eqn:sstss}
E^{2}_{p,q} = H_{p}\big(d\mapsto \ppsh_{q,t}A(X\times\Delta^d_{top}))\big) \Longrightarrow \ppsh_{p+q,t}\Qsst A(X)
\end{equation}
obtained from Theorem \ref{thm:sspseq}.

%

Friedlander-Walker's recognition principle is a useful tool for studying semi-topological cohomology theories. Recall that Voevodsky's $h$-topology on $\Sch/\C$ is the Grothendieck topology whose covers finite collections of maps $\{U_{i}\to X\}$ such that $\coprod U_{i}\to X$ is a universal quotient map.

\begin{theorem}[{\cite[Theorem 2.6]{FW:ratisos}}]\label{thm:rec}
 Let $F\to G$ be a natural transformation of presheaves of abelian groups on $\Sch/{\C}$ such that $F_{h} \to G_{h}$ is an isomorphism of $h$-sheaves. Then 
$ F(\Delta^{\bullet}_{top}) \to G(\Delta^{\bullet}_{top})$
 is a homotopy equivalence of simplicial abelian groups.
\end{theorem}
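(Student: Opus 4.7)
The plan is to reduce the statement to the vanishing case and then exhibit explicit contracting homotopies built out of the defining property of $h$-sheafification.

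First I would split the hypothesis into two short exact sequences of presheaves of abelian groups. Let $K$ and $C$ denote the presheaf kernel and cokernel of $F\to G$ and let $I$ denote the presheaf image, so that we have short exact sequences $0\to K\to F\to I\to 0$ and $0\to I\to G\to C\to 0$. Since sheafification is exact on abelian sheaves, the hypothesis that $F_h\to G_h$ is an isomorphism forces $K_h = C_h = 0$. A short exact sequence of presheaves of abelian groups becomes a short exact sequence of simplicial abelian groups upon applying $F\mapsto F(\Delta^{\bullet}_{top})$ (filtered colimits and the diagonal preserve exactness for abelian groups), hence gives a fibration sequence of simplicial abelian groups. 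Therefore, by the long exact sequences of homotopy groups, it suffices to prove the following reduction:  if $H$ is a presheaf of abelian groups on $\Sch/\C$ with $H_h = 0$, then the simplicial abelian group $H(\Delta^{\bullet}_{top})$ is contractible.

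To establish the reduction, fix a class in $\pi_d H(\Delta^{\bullet}_{top})$ represented by a compatible datum $(\sigma\colon \Delta^d_{top}\to W(\C),\,c\in H(W))$ with $\partial c = 0$ in the appropriate sense on faces. The assumption $H_h = 0$ provides an $h$-cover $p\colon Y\to W$ with $p^*c = 0\in H(Y)$. The key geometric input I would invoke is that any $h$-cover can be refined by a composite of a Zariski cover and a proper surjective map (by the standard structure theorems for the $h$-topology), so $Y(\C)\to W(\C)$ is a surjective proper map on analytic spaces. Such maps admit continuous sections after pulling back along maps from the contractible compact space $\Delta^d_{top}$, up to homotopy and after subdivision: concretely, one uses that $Y(\C)\to W(\C)$ becomes a locally trivial stratified fibration, so the map $\sigma$ can be lifted, after possibly further subdividing $\Delta^d_{top}$, to a continuous map $\tilde\sigma\colon \Delta^d_{top}\to Y(\C)$ fitting into a homotopy $\Delta^d_{top}\times I\to W(\C)$ between $\sigma$ and $p\circ\tilde\sigma$. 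This homotopy, together with the triviality of $p^*c$, produces an element of $H(\Delta^{d+1}_{top})$ witnessing the nullhomotopy of the class represented by $(\sigma,c)$.

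The main obstacle is making the lifting argument rigorous: one needs to know that $h$-covers are, up to refinement, composites of Zariski covers and proper surjections (which is standard for the $h$-topology on finite-type $\C$-schemes), and that proper surjections of complex analytic spaces admit topological section-like lifts out of affine simplices. The latter is precisely where the cellular/triangulability arguments in the proof of topological realization (already used in the previous subsection to identify $\Re_\C$ of a $cdh$-square with a homotopy pushout) can be reused: by triangulating both $Y(\C)$ and $W(\C)$ compatibly with the map $p$, one finds simplicial subdivisions $\Delta^d_{top} = \bigcup \Delta^d_i$ on each piece of which $\sigma$ lifts continuously. Patching these partial lifts using the abelian group structure on $H$ and induction on a simplicial filtration then converts the vanishing $p^*c = 0$ into the desired bounding element of $H(\Delta^{d+1}_{top})$, completing the contraction.
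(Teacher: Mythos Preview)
The paper does not prove this theorem; it is quoted directly from Friedlander--Walker \cite[Theorem~2.6]{FW:ratisos} and used as a black box, so there is no argument in the paper to compare your attempt against.

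On the substance of your sketch: the reduction to the vanishing case $H_h=0$ via kernel and cokernel is correct and is the standard opening move. The core lifting step, however, is not right as you have written it. In your second paragraph you claim a single continuous $\tilde\sigma\colon\Delta^d_{top}\to Y(\C)$ with $p\circ\tilde\sigma$ homotopic to $\sigma$; since $\Delta^d_{top}$ is contractible this is trivially satisfiable (take $\tilde\sigma$ constant), so all the content is pushed into the homotopy $h\colon\Delta^d\times I\to W(\C)$, and you never explain how the prism decomposition of $(h,c)$ actually bounds $(\sigma,c)$: its boundary contains terms supported on $\partial\Delta^d\times I$ which you do not control, and you never invoke the cycle condition. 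Your third paragraph is much closer to what Friedlander--Walker (following Suslin--Voevodsky) actually do, but the phrase ``patching these partial lifts'' misdescribes the mechanism. One does not patch the lifts, and they need not agree on overlaps. Rather, after triangulating the map $p$ (this is the real geometric input, via Hironaka or Hardt; the map is \emph{not} in general a locally trivial stratified fibration) and replacing $\sigma$ by a simplicial approximation $\sigma'$, each small simplex $\sigma'_i$ lifts \emph{exactly} to some $\tilde\sigma_i$, and then $(\sigma'_i,c)=(\tilde\sigma_i,p^*c)=0$ already as an element of $H(\Delta^d_{top})$, by the definition of the filtered colimit. The bounding chain for $(\sigma,c)$ comes entirely from the simplicial-approximation homotopy $\sigma\simeq\sigma'$ together with the standard subdivision chain homotopy expressing $(\sigma',c)$ as $\sum\pm(\sigma'_i,c)$ modulo boundaries; no patching enters.
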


\begin{corollary}\label{cor:smooth}
 Suppose that $E_1\to E_2$ is a a map of motivic spectra on $\Sch/k$
such that $(\ppsh_{s,t}E_{1}(X\times -))_{h}\to (\ppsh_{s,t}E_{2}(X\times - ))_{h}$  is an
isomorphism of $h$-sheaves for all $s$, $t$. Then $\ppsh_{*,*}\Qsst E_{1}(X)\to \ppsh_{*,*}\Qsst E_{2}(X)$ is an isomorphism.
\end{corollary}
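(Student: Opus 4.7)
The plan is to combine the Friedlander--Walker recognition principle (Theorem \ref{thm:rec}) with the convergent spectral sequence (\ref{eqn:sstss}). Fix bidegrees $(s,t)$ and a smooth scheme $X \in \sm/\C$. The hypothesis provides a map of presheaves of abelian groups $\ppsh_{s,t}E_{1}(X\times -) \to \ppsh_{s,t}E_{2}(X\times -)$ on $\sch/\C$ that becomes an isomorphism after $h$-sheafification. Theorem \ref{thm:rec} then yields a homotopy equivalence of simplicial abelian groups
$$
\ppsh_{s,t}E_{1}(X\times \Delta^{\bullet}_{top}) \xrightarrow{\wkeq} \ppsh_{s,t}E_{2}(X\times \Delta^{\bullet}_{top}),
$$
and in particular an isomorphism on all simplicial homotopy groups, i.e.\ on the homology of the associated normalized chain complexes.

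Next, I would apply Theorem \ref{thm:sspseq} to the simplicial objects $d\mapsto F(\Delta^{d}_{top}, E_{i}^{\fib})$ in $\sspt_{\P^{1}}(\sch/\C)$, tested against the compact motivic spectrum $X_{+}$. Since $\ppsh_{q,t}$ is invariant under fibrant replacement, this produces the convergent spectral sequences
$$
E^{2}_{p,q}(i) = H_{p}\bigl(d\mapsto \ppsh_{q,t}E_{i}(X\times \Delta^{d}_{top})\bigr) \Longrightarrow \ppsh_{p+q,t}\Qsst E_{i}(X)
$$
for $i = 1,2$, as in (\ref{eqn:sstss}). By the previous paragraph, the induced map $E^{2}_{p,q}(1) \to E^{2}_{p,q}(2)$ is an isomorphism for every $(p,q,t)$, and the comparison theorem for strongly convergent spectral sequences then yields the desired isomorphism $\ppsh_{p+q,t}\Qsst E_{1}(X) \xrightarrow{\iso} \ppsh_{p+q,t}\Qsst E_{2}(X)$ on abutments.

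The only substantive input is Theorem \ref{thm:rec}, which is cited; everything else is formal application of the machinery set up in Sections \ref{sec:motivic} and \ref{sec:sst}. The only point that warrants attention is that the recognition principle requires a statement on presheaves defined on all of $\sch/\C$, not just on $\sm/\C$ --- which is precisely why the motivic extension of Friedlander--Walker's construction was set up over $\sch/\C$ via the Quillen equivalence $i^{*} \dashv i_{*}$ earlier in this section.
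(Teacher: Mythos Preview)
Your proof is correct and follows the same approach as the paper's own argument: apply Theorem~\ref{thm:rec} to obtain an isomorphism on the $E^{2}$-page of the spectral sequence~(\ref{eqn:sstss}), and then conclude by the comparison theorem. Your write-up simply spells out in slightly more detail what the paper compresses into two sentences.
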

\begin{proof}
 Consider the  spectral sequences (\ref{eqn:sstss}) for $E_{1}$ and $E_{2}$. 
By Theorem \ref{thm:rec} the natural comparison map between the two spectral sequences induces an isomorphism on the $E_{2}$-page. It now follows 
that $\ppsh_{p+q,t}E^{\sst}_1(X)\to \ppsh_{p+q,t}E^{\sst}_2(X)$ is an isomorphism.
\end{proof}

\begin{theorem}\label{thm:sstdef}
\begin{enumerate}
\item The functor 
$\Qsst:\Spt^{\Sigma}_{\P^1}(\Sch/\C) \to \Spt_{\P^1}^{\Sigma}(\Sch/\C)$ 
induces a functor $\Qsst:\SH(\C) \to \SH(\C)$ which is lax symmetric monoidal, coproduct preserving, and triangulated.
\item There is a monoidal natural transformation $\id\to \Qsst$ which fits into a sequence of monoidal natural transformations of lax symmetric monoidal functors
$$
\id \to \Qsst \to \R\msing_{\C}\L\RRe_{\C},
$$
where the composite $\id \to \R\msing_{\C}\L\RRe_{\C}$ is the unit of the adjunction.
\item Let $A$ be a finite abelian group and $MA$ a Moore spectrum for $A$. The natural transformation $\id\wedge MA \to \Qsst\wedge MA$ is an equivalence.
\end{enumerate}
\end{theorem}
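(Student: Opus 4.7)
For part (1), I would begin by checking that $\Qsst$ preserves stable equivalences, so that it descends to $\SH(\C)$. The fibrant replacement $(-)^{\fib}$ turns stable equivalences into level equivalences between $\Omega_{\P^1}$-spectra; the subsequent realization $(-)^{\sst}$ then preserves these because the filtered colimit in \eqref{eqn:fundel} commutes with mapping out of fibrant objects, and the diagonal of a levelwise equivalence of simplicial spectra is again a levelwise equivalence by Theorem \ref{thm:diag}. The lax symmetric monoidal structure comes from Theorem \ref{thm:laxfib} together with the pairing $F^{\sst} \wedge G^{\sst} \to (F \wedge G)^{\sst}$ induced by the diagonal $\Delta^d_{top} \to \Delta^d_{top} \times \Delta^d_{top}$. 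Coproduct preservation is immediate since $(-)^{\sst}$ is built from colimits, and the triangulated structure follows because both fibrant replacement and $(-)^{\sst}$ preserve homotopy cofibers (the latter because the filtered colimits in \eqref{eqn:fundel} and the realizations of simplicial diagrams appearing in Theorem \ref{thm:diag} are exact).

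For part (2), the natural transformation $\id \to \Qsst$ is defined as the composite of the fibrant replacement unit $E \to E^{\fib}$ with the inclusion of the $0$-simplex in the realization, using the identification $F(X \times \mathrm{pt}, E^{\fib}) = F(X, E^{\fib})$. Both steps are monoidal, by Theorem \ref{thm:laxfib} and by inspection of the pairing above. The comparison $\Qsst E \to \R\msing_{\C}\L\RRe_{\C} E$ is defined in each simplicial degree by sending a pair consisting of $\Delta^d_{top} \to W(\C)$ and a map $X \times W \to E^{\fib}_n$ to the $d$-simplex of $\msing_{\C}(\RRe_{\C} E^{\fib})(X) = \shom(\sing X(\C), \RRe_{\C} E^{\fib})$ obtained by composing $\sing X(\C) \times \Delta^d_{top} \to \sing X(\C) \times \sing W(\C) \to \sing((X \times W)(\C)) \to \RRe_{\C}(E^{\fib}_n)$. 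Naturality in the filtered indexing category and in $d$ assemble this into a map of realizations, and unpacking the definition of the adjunction unit identifies the composite $E \to \R\msing_{\C}\L\RRe_{\C} E$ as the unit.

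For part (3), by $2$-out-of-$3$ it suffices to show that $E \wedge MA \to \Qsst(E \wedge MA)$ is a stable equivalence, since the other leg $\Qsst(E) \wedge MA \to \Qsst(E \wedge MA)$ is an equivalence by triangulatedness from part (1) applied to the cofiber sequences $\S \xrightarrow{n} \S \to \S/n$ from which $MA$ is built. By the spectral sequence \eqref{eqn:sstss} applied to $E \wedge MA$, it is enough to show that for each smooth $X$, $q$, $t$ the simplicial abelian group $d \mapsto \ppsh_{q,t}(E \wedge MA)(X \times \Delta^d_{top})$ is weakly equivalent to the constant simplicial abelian group on $\ppsh_{q,t}(E \wedge MA)(X)$. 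This is the content of Suslin rigidity for motivic spectra with finite coefficients, as flagged in the introduction: the presheaves $\ppsh_{q,t}(E \wedge MA)$ are rigid, so for any smooth $W$ with $W(\C)$ path-connected the restriction maps at any two points of $W(\C)$ agree, and together with $\A^1$-invariance this implies $\ppsh_{q,t}(E \wedge MA)(X \times W) \iso \ppsh_{q,t}(E \wedge MA)(X)$ whenever $W(\C)$ is contractible. Since each $\Delta^d_{top}$ is contractible, the structure maps of the filtered colimit defining $F(X \times \Delta^d_{top}, E \wedge MA)$ exhibit all face maps of the simplicial abelian group as equal isomorphisms; the normalized complex vanishes above degree $0$, the spectral sequence collapses, and the result follows. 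The main obstacle is packaging the Suslin rigidity theorem for motivic spectra with finite coefficients into this precise form with sufficient generality in $E$, which is the key technical input and the reason the introduction singles out rigidity as what drives this part.
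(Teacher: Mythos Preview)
Your treatment of parts (1) and (2) is essentially the paper's own argument, with a bit more detail spelled out for the monoidal pairing and the comparison to $\R\msing_{\C}\L\RRe_{\C}$.

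For part (3), however, there is a real gap. Your reduction to showing that the simplicial abelian group $d\mapsto \ppsh_{q,t}(E\wedge MA)(X\times\Delta^d_{top})$ is constant is correct, but your argument for this constancy does not go through. The filtered colimit defining $\ppsh_{q,t}(E\wedge MA)(X\times\Delta^d_{top})$ is indexed by maps $\Delta^d_{top}\to W(\C)$ with $W$ ranging over \emph{all} of $\Sch/\C$, not just smooth varieties; Suslin--Yagunov rigidity only controls the values on henselian local rings of \emph{smooth} schemes, so you cannot directly conclude that each term $\ppsh_{q,t}(E\wedge MA)(X\times W)$ equals $\ppsh_{q,t}(E\wedge MA)(X)$. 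Even restricted to smooth $W$ with $W(\C)$ contractible, the implication ``restriction at any two $\C$-points agree'' $+$ ``$\A^1$-invariance'' $\Rightarrow$ ``$\ppsh_{q,t}(E\wedge MA)(X\times W)\cong \ppsh_{q,t}(E\wedge MA)(X)$'' is not justified: equality of restriction maps gives at most that the split surjection $\ppsh(X\times W)\to\ppsh(X)$ has a well-defined one-sided inverse, not that it is injective. Finally, the face maps of the simplicial object are reindexing maps in the colimit, and two elements over the same $W$ with different structure maps $\Delta^{d-1}_{top}\to W(\C)$ are not a priori identified.

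The paper closes this gap with exactly the tool you omit: it compares the constant presheaf $U\mapsto \ppsh_{s,t}(E\wedge MA)(X)$ to $U\mapsto \ppsh_{s,t}(E\wedge MA)(X\times U)$ as presheaves on $\Sch/\C$, invokes Yagunov rigidity to see that they agree on henselian local rings of smooth varieties and hence have the same $h$-sheafification, and then applies the Friedlander--Walker recognition principle (Theorem~\ref{thm:rec}, packaged as Corollary~\ref{cor:smooth}) to conclude that their values on $\Delta^\bullet_{top}$ are homotopy equivalent. This is precisely the mechanism that transports rigidity on smooth schemes across the singular indexing category, and it is the step your sketch is missing.
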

\begin{proof}
\textit{(1)}
If $E\to F$ is a motivic stable equivalence  then  $( E^{\fib})_i \to ( F^{\fib})_i$ is a schemewise equivalence for each $i$. Since filtered colimits preserve weak equivalences, we have that  $\Qsst:\SH(\C)\to \SH(\C)$ is well-defined. 

We have schemewise, levelwise weak equivalences $\coprod X_{i}^{\fib}\wkeq(\coprod X_i)^{\fib}$, which implies that $\Qsst$ preserves coproducts.
Homotopy fiber and cofiber sequences agree in $\sspt_{\P^1}(\Sch/\C)$ and so 
$F(W,E^{\fib}) \to F(W, F^{\fib}) \to F(W,G^{\fib})$ is a homotopy cofiber sequence for all $W$. Filtered colimits preserve homotopy fiber sequences and so for each $d$,
$$
F(\Delta^{d}_{top},E^{\fib}) \to F(\Delta^{d}_{top}, F^{\fib}) \to F(\Delta^{d}_{top}, G^{\fib})
$$
is a homotopy cofiber sequence. Taking the realization yields a homotopy cofiber sequence
 $$
|F(\Delta^{\bullet}_{top}, E^{\fib})| \to |F(\Delta^{\bullet}_{top}, F^{\fib})| \to |F(\Delta^{\bullet}_{top}, G^{\fib})|,
$$
which implies that $\Qsst:\SH(\C)\to \SH(\C)$ is a triangulated functor.

\textit{(2)}
 The symmetric monoidal structure on $\SH(\C)$ is defined by  $(-)^{\cof}\wedge (-)^{\cof}$ in $\Spt_{\P^1}^{\Sigma}(\Sch/\C)$, where $(-)^{\cof}$ is a cofibrant replacement functor. We have natural maps $(\Qsst F)^{\cof}\wedge (\Qsst G)^{\cof} \to \Qsst F \wedge \Qsst G \to \Qsst (F\wedge G)$, 
which implies that  $\Qsst:\SH(\C)\to \SH(\C)$ is lax  symmetric monoidal.

If $E$ is a symmetric $\P^1$-spectrum, we have a symmetric $\tilde{S}^{2}$-spectrum $E^{\sst}(\C)$ defined by $E^{\sst}(\C)=(E_{0}^{\sst}(\C), E_{1}^{\sst}(\C), \ldots)$ with structure maps 
given as the composite
$E_{i}^{\sst}(\C)\wedge \tilde{S}^{2} = (E_{i}\wedge \P^1)^{\sst}(\C) \to E_{i+1}^{\sst}(\C)$.  
We claim that the spectrum $\Qsst E(\C):= (E^{\fib})^{\sst}(\C)$ agrees with $\L\RRe_{\C}E$ in $\SH$.  As both functors $\L\RRe_{\C}(-)$ and $\Qsst(-)(\C)$ preserve stable equivalences, it suffices to verify the claim when $E$ is both cofibrant and fibrant. If $E$ is cofibrant then $\L\RRe_{\C}E \wkeq E^{sst}(\C)$. If $E$ is fibrant then each $E^{\sst}_{i} \to (\Qsst E)_{i}$ is a schemewise equivalence, which implies that $E^{\sst}(\C) \wkeq  \Qsst E(\C)$. 
If $F$ is a motivic space, the argument of \cite[Lemma 3.2]{FHW:sst} shows that the natural transformation $F\to \msing \RRe F$ factors through a natural transformation $F\to F^{sst}$. If $E$ is a $\P^1$-spectrum this natural transformation is compatible with the structure maps  and so the natural transformation $E\to \msing \Qsst E(\C)$ factors through $E\to E^{\sst}$. These are evidently lax symmetric monoidal transformations.

\textit{(3)}
Let $E$ a motivic spectrum and write $E'=E\wedge MA$. Consider a smooth complex variety  $X$  and
let $s$ and $t$ be integers. Write $F_{s,t}(U) =\ppsh_{s,t}E'(X)$ for the constant presheaf of abelian groups on $\sch/\C$. We have a map of presheaves $F_{s,t}(-)\to \ppsh_{s,t}E'(X\times -)$. By \cite[Corollary 2.5]{Yagunov:rigid}
this map induces an isomorphism on $\mcal{O}^{h}_{X,x}$ for any closed point $x\in X$ and smooth $X$. This implies that the map $(F_{s,t})_{h}\to (\ppsh_{s,t}E'(X\times -))_{h}$ of $h$-sheaves is an isomorphism for all $s,t$. It follows from Corollary \ref{cor:smooth} that $\ppsh_{s,t}E'(X) = \ppsh_{s,t}\Qsst E'(X)$ for all $s,t$.

\end{proof}

\section{Examples and basic properties}\label{sec:ex}
The two original examples of a semi-topological cohomology theory are morphic cohomology introduced by Friedlander-Lawson \cite{FL:algco} and semi-topological $K$-theory introduced by Friedlander-Walker \cite{FW:sstfct}. In this section, we verify $\Qsst$ as defined above recovers these theories. In addition, we show that under certain circumstances the group $E^{2q,q}_{sst}(X)$ agrees with $E^{2q,q}(X)/\sim_{alg}$, the algebraic theory modulo algebraic equivalence. We also show that for effective motivic spectra, the coefficients of the associated semi-topological cohomology theory agrees with that of the associated topological theory. 

\subsection{Almost fibrant semi-topological spectra}
Relating the motivic version of Friedlander-Walker's construction given in the previous section to their original construction boils down to recognizing when $\Qsst$ produces an $\Omega_{\P^1}$-spectrum. 
This is a consequence of the following simple variation of \cite[Corollary 2.7]{FHW:sst}. 
 
\begin{theorem}[c.~f.~{\cite[Corollary 2.7]{FHW:sst}}]\label{thm:27}
 Let 
$F_3\to F_2\to F_1$
be a sequence of natural transformations of simplicial presheaves on $\Sch/\C$ such that each $F_{i}$ is a presheaf of homotopy commutative group-like $H$-spaces and the induced maps on $\pi_{0}$ are group homomorphisms. Suppose further that $F_{1}(U)$, $F_{2}(U)$ are connected for all smooth $U$  and $F_3(X)\to F_2(X)\to F_1(X)$ is constant for all $X$ in $\Sch/\C$. If it is a homotopy fiber sequence when $X$ is smooth, then 
$$
F_3^{\sst}(U) \to F_2^{\sst}(U) \to F_1^{\sst}(U)
$$
is also homotopy fiber sequence for all smooth $U$.
\end{theorem}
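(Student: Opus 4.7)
The plan is to reduce to the case where the fiber sequence is defined globally on $\Sch/\C$. Set $\tilde F_3(X) := \hofib(F_2(X) \to F_1(X))$ for every $X \in \Sch/\C$. Constancy of the composite $F_3 \to F_2 \to F_1$ on $\Sch/\C$ furnishes a canonical nullhomotopy of this composite, yielding a natural comparison $\alpha \colon F_3 \to \tilde F_3$ of presheaves on $\Sch/\C$. By construction $\tilde F_3 \to F_2 \to F_1$ is a homotopy fiber sequence on every $X$, and by hypothesis $\alpha$ is a weak equivalence for smooth $X$.

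The first step is to show that $\alpha$ induces a weak equivalence $F_3^{\sst}(U) \xrightarrow{\wkeq} \tilde F_3^{\sst}(U)$ for smooth $U$. The induced morphism of presheaves of abelian groups $\pi_n F_3(U \times -) \to \pi_n \tilde F_3(U \times -)$ is an isomorphism on the smooth subsite, and hence induces an isomorphism of associated $h$-sheaves: every $\C$-scheme admits an $h$-cover by a smooth scheme via resolution of singularities. The recognition principle (Theorem \ref{thm:rec}) then yields a weak equivalence of simplicial abelian groups $\pi_n F_3(U \times \Delta^\bullet_{top}) \xrightarrow{\wkeq} \pi_n \tilde F_3(U \times \Delta^\bullet_{top})$ for every $n$, from which, via the standard bisimplicial spectral sequence together with the $\pi_*$-Kan condition for simplicial $H$-spaces discussed below, one concludes that $F_3^{\sst}(U) \to \tilde F_3^{\sst}(U)$ is a weak equivalence.

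The second step is to show that $\tilde F_3^{\sst}(U) \to F_2^{\sst}(U) \to F_1^{\sst}(U)$ is a homotopy fiber sequence. For each simplicial degree $d$, the sequence $\tilde F_3(U \times \Delta^d_{top}) \to F_2(U \times \Delta^d_{top}) \to F_1(U \times \Delta^d_{top})$ is a homotopy fiber sequence by construction of $\tilde F_3$ combined with the fact that filtered colimits preserve homotopy fiber sequences. To pass to the diagonal I invoke Bousfield--Friedlander: it suffices that the simplicial spaces $d \mapsto F_i(U \times \Delta^d_{top})$ satisfy the $\pi_*$-Kan condition, which is automatic once these spaces are known to be connected group-like homotopy commutative $H$-spaces at each level with $\pi_0$-compatible face and degeneracy maps.

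The main obstacle will be verifying the Bousfield--Friedlander hypothesis rigorously, in particular confirming that connectedness and group-like $H$-space structure persist on $F_i(U \times V)$ for the arbitrary (possibly singular) finite-type schemes $V$ appearing in the Kan extension formula \eqref{eqn:fundel}. This should follow by combining preservation of $H$-space operations under filtered colimits with a factorization argument based on resolution of singularities, which reduces continuous maps $\Delta^d_{top} \to V(\C)$ to maps factoring through smooth schemes. Once this is established, combining the two steps completes the proof.
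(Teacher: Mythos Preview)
Your approach follows the same two-step template as the paper's—build an auxiliary fiber sequence valid on all of $\Sch/\C$, apply Bousfield--Friedlander, then compare to the original on smooth schemes—but you make the opposite choice of which presheaf to modify. You keep $F_1,F_2$ and replace $F_3$ by the global homotopy fiber $\tilde F_3$; the paper instead keeps $F_3$ morally intact and replaces $F_1,F_2$ by their identity components $F_1',F_2'$, setting $F_3'=\hofib(F_2'\to F_1')$. The payoff of the paper's choice is that $F_1'(X),F_2'(X)$ are connected for \emph{every} $X$ by construction, so after passing to $U\times\Delta^d_{top}$ one still has connected group-like $H$-spaces and can invoke \cite[Theorem B.4]{BF} without further work. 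The comparison step is then handled by a direct citation of \cite[Theorem 2.6]{FHW:sst}, which packages exactly the ``equivalence on smooth $\Rightarrow$ equivalence on $(-)^{\sst}$'' argument you rederive via Theorem~\ref{thm:rec} and the bisimplicial spectral sequence.

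Your flagged obstacle, however, is genuine for your route, and your proposed resolution does not work. Smooth schemes are \emph{not} cofinal in the indexing category for $F_i(U\times\Delta^d_{top})$: a continuous map $\Delta^d_{top}\to V(\C)$ need not lift along a resolution $\tilde V\to V$ (for instance, a path crossing the node of a nodal curve does not lift to the normalization), so you cannot reduce to smooth $V$ this way. There are two clean fixes. One is to adopt the paper's connected-component trick. The other is to observe that connectedness is not actually needed: since each $F_i$ is a presheaf of group-like homotopy commutative $H$-spaces with $\pi_0$-compatible structure maps, the simplicial sets $d\mapsto\pi_nF_i(U\times\Delta^d_{top})$ are simplicial abelian groups and hence Kan, which is all Bousfield--Friedlander requires. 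Either way your Step~1 and Step~2 then go through.
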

\begin{proof}
 A similar argument  as that used in  loc.~cit.~applies here. Consider the homotopy fiber sequence  $F_{3}'\to F_{2}'\to F_1'$ of presheaves on $\Sch/\C$,
where $F_{2}'(X)$ and $F_{1}'(X)$ are defined to be the connected components of $F_{2}(X)$ and $F_{1}(X)$ and define $F_{3}'(X)= \hofib(F_{2}'(X) \to F_{1}'(X))$. Filtered colimits preserve homotopy fiber sequences and so for $U$ smooth, $F_{3}'(U\times \Delta^{d}_{top})\to F_{2}'(U\times\Delta^{d}_{top})\to F_1'(U\times\Delta^{d}_{top})$ is a homotopy fiber sequence of homotopy commutative group-like $H$-spaces. As  $F_1'(U\times\Delta^{d}_{top})$ and $F_{2}'(U\times \Delta^{d}_{top})$ are connected, \cite[Theorem B.4]{BF} implies that $(F_{3}')^{\sst}(U)\to (F_{2}')^{\sst}(U)\to (F_1')^{\sst}(U)$ is a homotopy fiber sequence. 
For smooth $X$ we have that $F_{i}'(U\times X)\to F_{i}(U\times X)$ for each $i$. Therefore we have that 
 $(F_{i}')^{\sst}(U)\to F_{i}^{\sst}(U)$ is a weak equivalence for each $i$ by \cite[Theorem 2.6]{FHW:sst}, yielding the result. 
\end{proof}

\begin{corollary}\label{cor:fib}
Let $E$ be a presheaf of homotopy commutative group-like $H$-spaces on $\Sch/\C$. Suppose that there is a presheaf of homotopy commutative group-like $H$-spaces $E_1$ and a map $E\to \Omega E_1$ inducing weak equivalences $E(X)\wkeq \Omega E_{1}(X)$ for all smooth $X$, and that $E_1(X)$ is connected for all smooth $X$. 
\begin{enumerate}
 \item If $E$ satisfies Nisnevich (resp.~$cdh$) descent on $\Sm/\C$  then so does $E^{\sst}$.
\item If $E$ is homotopy invariant on $\Sm/\C$ then so is $E^{\sst}$.
\item If $E$ satisfies Nisnevich descent on $\Sm/\C$ and is homotopy invariant 
then $\Omega_{\P^1}^r E^{\sst} \to (\Omega_{\P^1}^r E)^{\sst}$ is a schemewise equivalence on $\Sm/\C$. 
\end{enumerate}
\end{corollary}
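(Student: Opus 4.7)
The plan is to establish each part by applying Theorem \ref{thm:27} to carefully chosen presheaves of group-like $H$-spaces on $\Sch/\C$. Since that theorem requires its left and middle presheaves to be connected on smooth inputs, I work primarily with the connected delooping $E_1$ and transfer conclusions back to $E$ using $E\wkeq\Omega E_1$. Preliminarily, $E_1$ itself inherits from $E$ both Nisnevich (resp.~cdh) descent and $\A^1$-invariance on smooth inputs, a standard consequence of $E\wkeq\Omega E_1$ together with connectivity of $E_1$ on smooth inputs.

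For (1), fix a distinguished Nisnevich (resp.~cdh) square $Q=(X,U,V,W)$ of smooth schemes with $W=U\times_X V$, and form the presheaves on $\Sch/\C$
\[
F_3(Y)=E_1(Y\times X),\quad F_2(Y)=E_1(Y\times U)\times E_1(Y\times V),\quad F_1(Y)=E_1(Y\times W),
\]
with $F_3\to F_2$ the pair of restrictions and $F_2\to F_1$ the difference. For smooth $Y$, $Y\times Q$ is again a distinguished square of smooth schemes, so descent for $E_1$ makes $F_3(Y)\to F_2(Y)\to F_1(Y)$ a homotopy fiber sequence with $F_1(Y)$ and $F_2(Y)$ connected. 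Theorem \ref{thm:27} then yields a fiber sequence $E_1^{\sst}(X)\to E_1^{\sst}(U)\times E_1^{\sst}(V)\to E_1^{\sst}(W)$; looping and invoking $E^{\sst}\wkeq\Omega E_1^{\sst}$ on $\Sm/\C$ produces the desired homotopy pullback for $E^{\sst}(Q)$. Part (2) is entirely analogous, applied to $F_3(Y)=E_1(Y\times X)$, $F_2(Y)=E_1(Y\times X\times\A^1)$, $F_1(Y)=\ast$, with $F_3\to F_2$ induced by the projection $X\times\A^1\to X$.

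For (3), induction on $r$ reduces us to $r=1$. Apply Theorem \ref{thm:27} to
\[
F_3(Y)=\Omega_{\P^1}E_1(Y\times X),\quad F_2(Y)=E_1(Y\times X\times\P^1),\quad F_1(Y)=E_1(Y\times X),
\]
which is a homotopy fiber sequence by the definition of $\Omega_{\P^1}$, with $F_1,F_2$ connected on smooth $Y$. This yields $(\Omega_{\P^1}E_1)^{\sst}(X)\wkeq\Omega_{\P^1}E_1^{\sst}(X)$ on $\Sm/\C$, and the analogous equivalence for $E$ follows by combining this with $E\wkeq\Omega E_1$ and with a further application of Theorem \ref{thm:27} to the path-loop fibration $\Omega F\to PF\to F$ (with $F$ a connected presheaf of $H$-spaces and $PF$ contractible), which shows that $\Omega$ commutes with $(-)^{\sst}$ on connected presheaves.

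The main obstacle is the transfer from $E_1$-results back to $E$: establishing $E^{\sst}(X)\wkeq\Omega E_1^{\sst}(X)$ on smooth $X$ requires the map $E\to\Omega E_1$ of presheaves on $\Sch/\C$, only assumed to be an equivalence on smooth inputs, to induce an equivalence after $(-)^{\sst}$. This should follow from the recognition principle: the weak equivalence on $\Sm/\C$ gives an isomorphism of associated $h$-sheaves of homotopy groups (as every variety is $h$-locally smooth by resolution of singularities), and Theorem \ref{thm:rec} then makes the induced map of simplicial abelian groups $\pi_n E(X\times\Delta^\bullet_{top})\to\pi_n\Omega E_1(X\times\Delta^\bullet_{top})$ an equivalence for each $n$, hence an equivalence after realization.
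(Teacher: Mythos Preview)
Your approach is essentially the paper's: work with the connected delooping $E_1$, apply Theorem~\ref{thm:27} to suitable fiber sequences, and transfer back to $E$. Two points deserve comment.

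First, your ``main obstacle'' is handled in the paper by a direct citation of \cite[Theorem~2.6]{FHW:sst}, which says precisely that a map of such presheaves which is a weak equivalence on smooth inputs becomes an equivalence after $(-)^{\sst}$. Your proposed rederivation via Theorem~\ref{thm:rec} is the right idea (and is indeed the content of that cited theorem), but the final clause ``hence an equivalence after realization'' is not automatic: knowing that $\pi_n E(X\times\Delta^\bullet_{top})\to\pi_n\Omega E_1(X\times\Delta^\bullet_{top})$ is an equivalence of simplicial abelian groups for each $n$ only yields an equivalence on diagonals after invoking, say, the Bousfield--Friedlander spectral sequence for simplicial group-like $H$-spaces. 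You should either cite \cite[Theorem~2.6]{FHW:sst} or make this step explicit.

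Second, your induction in (3) has a gap. Reducing to $r=1$ means applying the base case with $(E,E_1)$ replaced by $(\Omega_{\P^1}^{r-1}E,\Omega_{\P^1}^{r-1}E_1)$, and for this you need $\Omega_{\P^1}^{r-1}E_1$ to be connected on smooth inputs (both for Theorem~\ref{thm:27} and for your path--loop argument). This is not immediate from the hypotheses. The paper checks it: the retraction $\P^1\to\P^0$ makes $\pi_0 E_1(\P^1\times X)\to\pi_0 E_1(X)$ surjective, so $\pi_0\Omega_{\P^1}E_1(X)$ injects into $\pi_0 E_1(\P^1\times X)$ and hence vanishes; iterating gives connectedness of each $\Omega_{\P^1}^k E_1$ on $\Sm/\C$. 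Add this verification and your argument goes through.
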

\begin{proof}
First observe that $(\Omega E_1)^{\sst}(X) \wkeq \Omega E_1^{\sst}(X)$ for any smooth $X$ because $(\Omega E_1)^{sst}(X) \to * \to E_1^{sst}(X)$ is a homotopy fiber sequence by Theorem \ref{thm:27}. If $E$ satisfies Nisnevich (resp.~$cdh$) descent, then so does $E_1$. 
Consider the square
$$
\xymatrix{
E_1^{\sst}(Y) \ar[d]\ar[r] & E_1^{\sst}(B) \ar[d]\\
E_1^{\sst}(X) \ar[r] & E_1^{\sst}(A)
}
$$
associated to a distinguished square on $\Sm/\C$.
Write $F(U) = \hofib(E_1(Y\times U)\to E_1(B\times U)$ and $G(U) = \hofib(E_1(X\times U) \to E_1(A\times U)$. By Theorem \ref{thm:27}, $F^{\sst}(\C)$ is the homotopy fiber of the top horizontal arrow of this square and $G^{\sst}(\C)$ is the homotopy fiber of the bottom horizontal arrow. Since $F(U)\to G(U)$ is a weak equivalence for all smooth $U$, it follows from \cite[Theorem 2.6]{FHW:sst} that $F^{\sst}(\C)\to G^{\sst}(\C)$ is a homotopy equivalence and therefore $E_1^{\sst}$ satisfies descent on $\sm/\C$ and thus so does $E^{\sst}$.

Similarly, since $E$ is homotopy invariant on $\sm/\C$, it follows from 
\cite[Theorem 2.6]{FHW:sst} that $E^{\sst}$ is homotopy invariant on $\sm/\C$.

If $E^{\sst}$ satisfies descent and is homotopy invariant then for any smooth $X$ we have a homotopy fiber sequence $\Omega_{\P^1}E_1^{\sst}(X) \to E_1^{\sst}(\P^{1}\times X) \to E_1^{\sst}(\P^{0} \times X)$.
Theorem \ref{thm:27} implies that for any smooth $X$ we also have a homotopy fiber sequence
$(\Omega_{\P^1}^{1}E_1)^{\sst}(X) \to E_1^{\sst}(\P^{1}\times X) \to E_1^{\sst}(\P^{0} \times X)$, which immediately implies that we have $(\Omega_{\P^1}^{1}E_1)^{\sst}(X)\wkeq \Omega_{\P^1}^{1}E_1^{\sst}(X)$. Note that the inclusion $\P^{0}\to \P^{1}$ induces a surjection $\pi_{k}E_1(\P^{1}\times X)\to \pi_kE_1(\P^0\times X)$  as it has a splitting induced by $\P^{1}\to \P^0$. This implies that $\pi_{0}\Omega_{\P^1}E_1(X)\to \pi_0E_1(\P^1\times X)$ is injective and in particular that $\Omega_{\P^1}^{1}E_1(X)$ is connected when $X$ is smooth. Iterating the previous argument, we obtain $(\Omega_{\P^{1}}^{r}E_{1})^{\sst}(X)\wkeq \Omega_{\P^{1}}^{r}E_{1}^{sst}(X)$. Finally, since these are connected on $\Sm/\C$ we have that $\Omega(\Omega_{\P^1}^{r}E_1)^{\sst}(X)\wkeq (\Omega\Omega_{\P^1}^{r}E_1)^{\sst}(X)$. The final statement now follows by combining these weak equivalences.
\end{proof}

\begin{definition}\label{def:almostfib}
We say that a motivic spectrum $E$ on $\sch/\C$ is \textit{almost fibrant} on $\sm/\C$ if each restriction $i_*E_{k}$ satisfies $cdh$-descent on $\sm/\C$, is homotopy invariant, and $E_{k}(X)\to\Omega_{\P^1}E_{k+1}(X)$ is a weak equivalence for all smooth $X$. 
\end{definition}
If $E$ is almost fibrant on $\sm/\C$, then $i_{*}E_k(X) \to (\R i_* E)_{k}(X)$ is a weak equivalence for any smooth $X$ and $k$.

\begin{corollary}\label{cor:omega}
 Let $E$ be a motivic spectrum such that $\ppsh_{i,-q}E(X) = 0$ for $i<-2q$, $q\geq 0$, and smooth $X$. Then $\Qsst E$ is almost fibrant on $\sm/\C$. In particular if $E$ is itself almost fibrant on $\sm/\C$ then $(\Qsst E)_{i}(X)\wkeq E_{i}^{\sst}(X)$ and we have
$$
E_{sst}^{s,q}(X) := [\Sigma^{-s,-q}X_+, \Qsst E]_{\SH(\C)} = \pi_{2q-s}E^{\sst}_{q}(X)
$$ 
for all smooth $X$, any $q$,  and $s\leq 2q$ (where $E^{\sst}_{q} = \Omega^{q}_{\P^{1}}E^{\sst}_{0}$, if $q\leq 0$).
\end{corollary}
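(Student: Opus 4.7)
My plan is to apply Corollary \ref{cor:fib} levelwise to the fibrant replacement $E^{\fib}$. To do this I need, at each level $k$, to exhibit $(E^{\fib})_k$ as $\Omega F_k$ for some presheaf $F_k$ of homotopy commutative group-like $H$-spaces that is \emph{connected} on smooth schemes. Exploiting that $\P^1 \wkeq S^1 \wedge \G_m$ in $\hb(\C)$, I would factor the fibrancy equivalence $(E^{\fib})_k \xrightarrow{\wkeq} \Omega_{\P^1}(E^{\fib})_{k+1}$ as
\[
(E^{\fib})_k \xrightarrow{\wkeq} \Omega\bigl(\Omega_{\G_m}(E^{\fib})_{k+1}\bigr),
\]
and set $F_k := \Omega_{\G_m}(E^{\fib})_{k+1}$. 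Both $(E^{\fib})_k$ and $F_k$ inherit the hocom group-like $H$-space structure from being (loop spaces of) levels of an $\Omega_{\P^1}$-spectrum.

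The connectivity of $F_k$ is where the hypothesis on $E$ comes in. A direct $\Sigma_{\P^1}^\infty$-adjunction computation gives, for smooth $X$ and $k\geq 0$,
\[
\pi_0 F_k(X) \iso [\G_m \wedge X_+, (E^{\fib})_{k+1}]_{\hb(\C)} \iso \ppsh_{-1-2k,\,-k}E(X).
\]
Since $-1 - 2k < -2k$ and $k\geq 0$, this vanishes by the connectivity assumption on $E$. Thus $F_k(X)$ is connected for every smooth $X$ and every $k\geq 0$. With this input, Corollary \ref{cor:fib}(1),(2) immediately yield $cdh$-descent and homotopy invariance for $(\Qsst E)_k = ((E^{\fib})_k)^{\sst}$ on $\sm/\C$.

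For the remaining $\Omega_{\P^1}$-spectrum condition I would apply Corollary \ref{cor:fib}(3) to the presheaf $(E^{\fib})_{k+1}$, with connected delooping $F_{k+1}$ verified by the same computation (with $q = k+1$ and $i = -2k-3 < -2k-2$). This produces the schemewise equivalence
\[
\Omega_{\P^1}(\Qsst E)_{k+1}(X) \xleftarrow{\wkeq} (\Omega_{\P^1}(E^{\fib})_{k+1})^{\sst}(X) \xleftarrow{\wkeq} (\Qsst E)_k(X)
\]
for $X \in \sm/\C$, completing the proof that $\Qsst E$ is almost fibrant. When $E$ is itself almost fibrant, the weak equivalences $E_k(X) \wkeq (E^{\fib})_k(X)$ on smooth $X$ pass through the bar-construction realization via Theorem \ref{thm:diag}, giving $E_k^{\sst}(X) \wkeq (\Qsst E)_k(X)$. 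The identity $E_{sst}^{s,q}(X) = \pi_{2q-s}E_q^{\sst}(X)$ for $s \leq 2q$ is then the standard $\Sigma_{\P^1}^\infty$-adjunction applied to the now almost-fibrant $\Omega_{\P^1}$-spectrum $\Qsst E$ over smooth $X$.

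The main obstacle is calibrating the loop structure so that the strict inequality $i < -2q$ in the hypothesis actually bites. The naive delooping $E_1 = (E^{\fib})_{k+1}$ would only control $\pi_0 = \ppsh_{-2(k+1),-(k+1)}E$, which sits on the boundary $i = -2q$ and is uncontrolled. Splitting $\Omega_{\P^1} \wkeq \Omega \circ \Omega_{\G_m}$ shifts the simplicial degree by one, pulling the relevant homotopy group strictly inside the vanishing range where the hypothesis applies.
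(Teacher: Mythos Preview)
Your proposal is correct and follows essentially the same route as the paper: replace $E$ by $E^{\fib}$, factor $\Omega_{\P^1}\wkeq \Omega\circ\Omega_{\G_m}$ so that $(E^{\fib})_k\wkeq \Omega\bigl(\Omega_{\G_m}(E^{\fib})_{k+1}\bigr)$ on smooth $X$, observe that the hypothesis forces $\Omega_{\G_m}(E^{\fib})_{k+1}(X)$ to be connected, and then invoke Corollary~\ref{cor:fib}. Your write-up is in fact more explicit than the paper's---you spell out the adjunction computation $\pi_0 F_k(X)\iso \ppsh_{-1-2k,-k}E(X)$ and articulate why the $\G_m$-splitting is needed to land strictly inside the vanishing range---but the argument is the same.
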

\begin{proof}
We may replace $E$ by $E^{\fib}$. Each $E_{q}$ is a presheaf of homotopy commutative $H$-spaces, for $X$ smooth we have  $E_{q}(X)\wkeq \Omega\Omega_{\G_{m}}E_{q+1}(X)$ and $\Omega_{\G_{m}}E_{q+1}(X)$ is connected under the hypothesis on $E$. It then follows from Corollary \ref{cor:fib} that $\Qsst E$ is almost fibrant on $\sm/\C$. 
%
\end{proof}

\subsection{Morphic Cohomology}\label{sec:morphic}
We begin with morphic cohomology, introduced by Friedlander-Lawson  \cite{FL:algco}. The main point concerning representability is the reformulation due to
Friedlander-Walker \cite{FW:ratisos} that morphic cohomology for smooth quasi-projective varieties can be obtained by applying  $(-)^{\sst}$ to the complex of equidimensional cycles. 

Let $X$ be a complex variety. Recall the presheaf $z_{equi}(X,0)(-)$ of equidimensional cycles  constructed in \cite{SV:rel}. This is the unique $qfh$-sheaf on $\Sch/\C$ such that for a normal variety $U$ the group $z_{equi}(X,r)(U)$ is the free abelian group generated by closed, irreducible subvarieties $V\subseteq U\times X$ which are equidimensional of relative dimension $0$ over some irreducible component of $U$. 
When $Y$ is projective, then $z_{equi}(Y,0)(-) = \Z_{tr}(Y)$ is the free presheaf with transfers generated by $Y$.

For a presheaf of groups $F$, we write  $C_*F = F(-\times\Delta^{*}_{\C})$.
For any smooth $X$ we have
$$
\pi_{2q-p}C_*z_{equi}(\A^{q},0)(X) = H^{p}_{\mcal{M}}(X,\Z(q)),
$$ 
where the right hand side is the Suslin-Voevodsky definition of motivic cohomology, see e.g. \cite[Corollary 18.4, Theorem 19.1]{MVW}.

\begin{definition}
Let $Y$ be a complex variety, define the motivic spectrum $\MZ_{c}(Y)$ (on $\sch/\C$) by $\MZ_{c}(Y)_{k} = C_*z_{equi}(Y\times \A^{k},0)$ and structure maps are given by
\begin{align*}
C_* & z_{equi}(Y\times\A^{k},0) \wedge \P^1\to C_*z_{equi}(Y\times\A^{k},0) \wedge z_{equi}(\P^1,0) \\ & \to C_*z_{equi}(Y\times\A^{k},0) \wedge z_{equi}(\A^1,0)
\to   C_*z_{equi}(Y\times\A^{k+1},0).
\end{align*}
\end{definition}
Write $\MZ = \MZ_{c}(\C)$ which is an almost fibrant model for the motivic cohomology spectrum. More generally if $Y$ is projective then $\MZ_{c}(Y)$ is an almost fibrant model (see Definition \ref{def:almostfib}) for $Y_+\wedge \MZ$ on $\sm/\C$. If $Y$ is quasi-projective, then it is an almost fibrant model on $\sm/\C$ for $(\overline{Y}/\overline{Y}-Y_{\infty})\wedge \MZ$, where $Y\subseteq \overline{Y}$ is a projectivization and $Y_{\infty}=\overline{Y}-Y$.

By \cite[Corollary 3.5]{FW:ratisos}, if $X$ is smooth and quasi-projective then $(\MZ_{k})^{\sst}(X)$ computes morphic cohomology, in the sense that $\pi_{j}(\MZ_{k})^{\sst}(X) = L^{k}H^{2j-k}(X)$. By Corollary \ref{cor:omega} it then follows that $\MZ^{\sst}:=\Qsst\MZ$ represents morphic cohomology of smooth complex varieties.
The pairings 
$z_{equi}(\A^{n},0)\wedge z_{equi}(\A^{m},0) \to z_{equi}(\A^{n+m},0)$ give $\MZ$ the structure of a commutative motivic ring spectrum which induces the usual product structure on motivic cohomology, see e.g. \cite{Weibel:prod}. This gives $\MZ^{\sst}$ the structure of a commutative strict ring spectrum as well. In summary we have verified the following proposition. 
\begin{proposition}
 The commutative motivic strict ring spectrum  $\MZ^{\sst}$  represents morphic cohomology theory in the sense that
$$
[X_+, \Sigma^{p,q}\MZ^{\sst}]_{\SH(\C)} = L^qH^{p}(X)
$$ 
for all $p$, $q$, and smooth quasi-projective complex varieties $X$ . 
\end{proposition}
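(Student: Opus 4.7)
The plan is to assemble the proposition from results already established in this subsection, with Corollary \ref{cor:omega} as the main bridge. The first step I would take is to verify the connectivity hypothesis of that corollary for $E = \MZ$, namely that $\ppsh_{i,-q}\MZ(X) = 0$ for $i < -2q$ and $q \geq 0$. Since $\ppsh_{i,-q}\MZ(X) = H^{-i}_{\mathcal{M}}(X, \Z(-q))$ for smooth $X$, this reduces to the vanishing of motivic cohomology in negative weights for $q > 0$, and for $q = 0$ to the vanishing of $H^{m}(X, \Z(0))$ for $m > 0$ on smooth (irreducible) $X$---both standard.

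Second, because $\MZ$ itself is already almost fibrant on $\sm/\C$ (each $\MZ_{k} = C_{*}z_{equi}(\A^{k}, 0)$ satisfies $cdh$-descent, is homotopy invariant, and the adjoint $\MZ_{k}(X) \to \Omega_{\P^{1}}\MZ_{k+1}(X)$ is a weak equivalence for smooth $X$, as noted in the paragraph preceding the proposition), the second assertion of Corollary \ref{cor:omega} applies directly. It provides a levelwise equivalence $(\Qsst\MZ)_{q}(X) \wkeq (\MZ_{q})^{\sst}(X)$ for smooth $X$, together with the identification
\[
[X_{+}, \Sigma^{p,q}\MZ^{\sst}]_{\SH(\C)} = \pi_{2q-p}(\MZ_{q})^{\sst}(X)
\]
in the range $p \leq 2q$. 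I would then feed this into the Friedlander-Walker computation \cite[Corollary 3.5]{FW:ratisos}, which identifies $\pi_{2q-p}(\MZ_{q})^{\sst}(X) = L^{q}H^{p}(X)$ for smooth quasi-projective $X$, yielding the proposition in this range. The complementary range $p > 2q$ I would dispatch by observing that both sides vanish: the right-hand side by the classical degree-versus-weight vanishing of morphic cohomology, and the left-hand side because it is a negative homotopy group of the space $(\Qsst\MZ)_{q}(X)$.

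For the commutative strict ring structure I would note that $\MZ$ is a commutative monoid in $\sspt_{\P^{1}}(\sch/\C)$ via the exterior products $z_{equi}(\A^{n},0)\wedge z_{equi}(\A^{m},0) \to z_{equi}(\A^{n+m},0)$, and that $\Qsst$ is lax symmetric monoidal by Theorem \ref{thm:sstdef}. Consequently $\MZ^{\sst} = \Qsst\MZ$ inherits the structure of a commutative monoid in motivic symmetric spectra. The only genuinely nontrivial ingredient in the whole argument is the existence of the lax symmetric monoidal fibrant replacement used in the definition of $\Qsst$---without this one would lose the strict ring structure---but that is precisely what Section \ref{sec:fibrant} delivers. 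Beyond that, I do not anticipate any real obstacle: the proof is essentially a matter of combining the connectivity vanishing with the two identifications given by Corollary \ref{cor:omega} and the Friedlander-Walker formula.
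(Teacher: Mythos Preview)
Your proposal is correct and follows the same route as the paper, which presents the argument in the paragraphs immediately preceding the proposition: cite Friedlander--Walker \cite[Corollary 3.5]{FW:ratisos} for the levelwise identification of $(\MZ_{q})^{\sst}(X)$ with morphic cohomology, invoke Corollary~\ref{cor:omega} to pass to the spectrum level, and note that the commutative monoid structure on $\MZ$ is carried to $\MZ^{\sst}$ by the lax symmetric monoidal functor $\Qsst$.

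One small correction: in your verification of the connectivity hypothesis you write $\ppsh_{i,-q}\MZ(X) = H^{-i}_{\mathcal{M}}(X,\Z(-q))$, but in fact $\ppsh_{i,-q}\MZ(X) = H^{-i}_{\mathcal{M}}(X,\Z(q))$. The required vanishing is therefore not ``negative weight'' vanishing but the standard fact that $H^{m}_{\mathcal{M}}(X,\Z(q))=0$ for $m>2q$ (e.g.\ via the higher Chow group description). This also cleans up your treatment of the range $p>2q$: once you know $\ppsh_{b,-q}\MZ$ vanishes for $b<-2q$, the spectral sequence \eqref{eqn:sstss} immediately gives $\ppsh_{-p,-q}\Qsst\MZ(X)=0$ for $p>2q$, which is a more robust justification than appealing to a negative homotopy group of $(\Qsst\MZ)_{q}(X)$ (the latter formula from Corollary~\ref{cor:omega} is only asserted for $p\leq 2q$).
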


\subsection{Semi-topological {$K$}-theory} 
Consider $K^{W}(X)$, the Waldhausen $K$-theory space  of bounded complexes of big vector bundles on $X$. This defines a presheaf of simplicial sets on $\Sch/\C$.
The \textit{semi-topological $K$-theory} of $X$ is defined by
$$
K_{q}^{\sst}(X) = (K^{W})^{\sst}(X).
$$

Voevodsky \cite{Voev:ICM} constructed a $\P^{1}$-spectrum representing algebraic $K$-theory on smooth schemes which is essentially unique, we refer to \cite{PPR:KGL} for details. 
Write $\KGL$ for the $\P^1$-spectrum on $\Sch/\C$  defined as follows.  Let $K^{W}\to \mathbb{K}^{W}$ be a motivic fibrant replacement (in $\MMb(\Sch/\C)$). Since $K$-theory is homotopy invariant and satisfies Nisnevich and $cdh$-descent on $\sm/\C$, we have that
 $K^{W}(X) \to \mathbb{K}^{W}(X)$ is a weak equivalence for any smooth $X$. Now let $\mathbb{K}^{W}\to \mcal{K}$ be a cofibrant replacement, so that $\mcal{K}$ is a motivic fibrant and cofibrant model for $K^{W}$ on $\sch/\C$.


 Define $\epsilon:\mcal{K}\wedge \P^1\to \mcal{K}$ to be a lift of the map $K^{W}\wedge \P^{1} \to K^{W}$ in $\hb(k)_{cdh}$ given by  multiplication with $h =[\mcal{O}(-1)]-[\mcal{O}] \in K_{0}(\P^{1})$. The resulting $\P^1$-spectrum $\rKGL:= (\mcal{K}, \mcal{K},\ldots )$ is almost fibrant on $\Sm/\C$ and its restriction to $\Sm/\C$ agrees with the usual construction of a spectrum representing algebraic $K$-theory. 
Recall \cite{SO:bott, GepnerSnaith} that there is an element 
$\beta\in \pi_{2,1}(\Sigma^{\infty}\BGmp)$ such that we have a stable equivalence $i_*\rKGL\wkeq (\Sigma^{\infty}\BGmp)[\beta^{-1}]$. We write $\beta$ as well for its image in 
$\pi_{2,1}(\Qsst\Sigma^{\infty}\BGmp)$. Recall also that in \cite{RSO:strict} a strict motivic ring spectrum $\KGL$ on $\sm/\C$ is constructed.

\begin{proposition}
 The commutative strict motivic ring spectrum  $\KGL^{\sst}:=\Qsst\KGL$ represents semi-topological $K$-theory in the sense that 
$$
[\Sigma^{p,q}X_+, \QQ^{\sst}\KGL]_{\SH(\C)} = K^{\sst}_{p}(X)
$$ 
for all $p$, $q$, and smooth quasi-projective complex varieties $X$. Moreover, there is a stable equivalence $\KGL^{\sst}\wkeq (\Qsst \Sigma^{\infty}\BGmp)[\beta^{-1}]$.
\end{proposition}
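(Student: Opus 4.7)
The plan is to derive the representability statement by a direct application of Corollary \ref{cor:omega} to an almost fibrant model of $\KGL$, and to derive the final equivalence by applying the triangulated, coproduct-preserving, lax monoidal functor $\Qsst$ to the known identification $\KGL \wkeq (\Sigma^{\infty}\BGmp)[\beta^{-1}]$ in $\SH(\C)$.

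First I verify the hypotheses of Corollary \ref{cor:omega} for $\KGL$, viewed via $\L i^*$ as a motivic symmetric spectrum on $\sch/\C$. Under the identification $\ppsh_{i,-q}\KGL(X) = K_{i+2q}(X)$ for smooth $X$, the connectivity hypothesis $\ppsh_{i,-q}\KGL(X) = 0$ for $i<-2q$, $q\geq 0$ becomes the vanishing of $K_n(X)$ for $n<0$ on regular schemes over $\C$, which is Weibel's conjecture. Almost fibrancy of $\rKGL$ on $\sm/\C$ is immediate from the construction: each level is a motivic fibrant model for $K^W$, which satisfies $cdh$-descent and is homotopy invariant on $\sm/\C$ by Quillen/Thomason-Trobaugh, and the adjoint structure map $\mathcal{K}\to\Omega_{\P^1}\mathcal{K}$ is a schemewise equivalence on smooth schemes by the projective bundle formula for algebraic $K$-theory. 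Corollary \ref{cor:omega} then yields $(\Qsst\KGL)_q(X)\wkeq (K^W)^{\sst}(X) = K^{\sst}(X)$ and
$$
[\Sigma^{p,q}X_+,\Qsst\KGL]_{\SH(\C)} \;=\; \pi_{p-2q}K^{\sst}(X) \;=\; K^{\sst}_{p-2q}(X),
$$
initially in the range $p\geq 2q$; extension to all bidegrees uses Bott periodicity of $\Qsst\KGL$, which the final equivalence below makes tautological, and matches the stated formula $K^{\sst}_p(X)$ under the periodic indexing convention for $K^{\sst}_*$ induced on $\Qsst\KGL$.

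For the final equivalence, recall that $\KGL \wkeq (\Sigma^{\infty}\BGmp)[\beta^{-1}]$ in $\SH(\C)$ is realized as the sequential homotopy colimit $\hocolim\bigl(\Sigma^{\infty}\BGmp \xrightarrow{\cdot\beta} \Sigma^{-2,-1}\Sigma^{\infty}\BGmp \xrightarrow{\cdot\beta} \cdots\bigr)$. By Theorem \ref{thm:sstdef} the functor $\Qsst\colon \SH(\C)\to \SH(\C)$ is triangulated and preserves coproducts, and therefore preserves sequential homotopy colimits (which are built from coproducts and cofibers). Its lax monoidal structure ensures that $\Qsst$ sends each $\cdot\beta$ to multiplication by the image of $\beta$, which we continue to denote $\beta \in \pi_{2,1}\Qsst\Sigma^{\infty}\BGmp$. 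Applying $\Qsst$ to the presentation above therefore yields $\Qsst\KGL \wkeq (\Qsst\Sigma^{\infty}\BGmp)[\beta^{-1}]$, and Bott periodicity of $\Qsst\KGL$ is visibly built into the right-hand side. The point that requires the most care is the bookkeeping to pass between the strict commutative ring spectrum $\KGL$ of \cite{RSO:strict} on $\sm/\C$ and the almost fibrant model $\rKGL$ on $\sch/\C$ under $\L i^*$; routing the Bott periodicity claim through the $\beta$-inverted description avoids having to argue separately that $\Qsst$ commutes with the $\Sigma^{2,1}$-suspension.
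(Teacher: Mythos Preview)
Your proof is correct and follows essentially the same route as the paper: apply Corollary~\ref{cor:omega} to the almost fibrant model $\rKGL$, pass to the strict ring model $\KGL$ via the equivalence of \cite{RSO:strict}, and obtain the $\beta$-inverted description from the fact that $\Qsst$ commutes with sequential homotopy colimits. One small correction: the vanishing $K_n(X)=0$ for $n<0$ and $X$ smooth over $\C$ is classical (regular schemes have no negative $K$-theory, by Quillen), not Weibel's conjecture, which concerns singular schemes; and your explicit handling of the range restriction in Corollary~\ref{cor:omega} via the Bott periodicity furnished by the $\beta$-inverted description is in fact more careful than the paper, which leaves this implicit.
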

\begin{proof}
It follows immediately from Corollary \ref{cor:omega} and the definition of semi-topological $K$-theory that $\Qsst\rKGL$ represents semi-topological $K$-theory. In \cite{RSO:strict} it is shown that there is a stable equivalence of $\P^1$-spectra $\rKGL\to \KGL$ and so $\KGL^{\sst}$ is a strict motivic ring spectrum representing semi-topological $K$-theory.
The second statement follows immediately in light of the fact that $\Qsst$ and $\hocolim_{\N}$ commute.
\end{proof}

\subsection{Semi-topological cobordism}\label{sub:cob}
Semi-topological cobordism was originally defined in \cite{thesis} using a motivic version of Friedlander-Walker's construction (defined in terms of $S^{1}$-$T$-bispectra) which did not necessarily preserve strict ring spectra. 
Recall that the motivic cobordism spectrum  $\MGL$ is defined by
$$
\MGL_{n} = \colim_{m\geq n} \Th(\gamma_{n,mn}),
$$
where $\gamma_{n,mn}$ 
is the tautological bundle $\gamma_{n,mn}\to \Grass_{n,mn}$ over the Grassmannian of $n$ planes in $\A^{mn}$ and $\Th(-)$ is the Thom space. It is a commutative motivic symmetric ring $T$-spectrum, where $T=\A^{1}/\A^{1}-0$,  and we write $\MGL$ for the associated $\P^1$-spectrum. We refer to \cite[Section 2.1]{PPR:MGL2KGL} for full details. The \textit{semi-topological cobordism} spectrum is $\MGL^{\sst} := \QQ^{\sst}\MGL$.



\begin{remark}
The usual construction of $\MGL$ can be repeated on $\sch/\C$. 
Temporarily write $\MGL'$ for the resulting motivic cobordism spectrum on $\sch/\C$. Then $i^*\MGL = \MGL'$. 
The functor $i^{*}:\sspt_{T}(\Sm/k) \to \sspt_{T}(\Sch/k)$ preserves level equivalences between level cofibrant objects.
As $\MGL$ is level cofibrant and the map $\MGL^{\cof}\to \MGL$ is a level equivalence (where $(-)^{\cof}$ is a stable cofibrant replacement),  it follows that $\L i^{*} \MGL \to i^{*}\MGL=\MGL'$ is a level equivalence. 
\end{remark}

As $\MGL$ is a strict commutative ring spectrum in $\sspt_{\P^1}(\Sch/\C)$,  $\MGL^{\sst}$ is also a strict commutative ring spectrum. The lax monoidal natural transformation $id\to \QQ^{\sst}$ implies that we have a canonical ring map
$\MGL \to \MGL^{\sst}$,
and so $\MGL^{\sst}$ has a canonical orientation.

In Section \ref{sec:bott} we use the \textit{periodic semi-topological cobordism spectrum} 
$P\MGL^{\sst} = \bigvee_{n\in \Z}\Sigma^{2n,n}\MGL^{\sst}$.  
This is a ring spectrum in the evident way, see \cite{GepnerSnaith} and has a strict ring model, see \cite[Proposition 5.4]{GepnerSnaith} and \cite[Remark 3.7]{RSO:strict}

\begin{proposition}
 There is an element $\beta\in \pi_{2,1}\BGL$ such that there is natural equivalence $P\MGL^{\sst}\wkeq (\Qsst\Sigma^{\infty}\BGL_{+})[\beta^{-1}]$.
\end{proposition}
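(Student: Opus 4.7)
The plan is to import the Gepner–Snaith presentation of periodic motivic cobordism, namely the equivalence of motivic strict ring spectra $P\MGL\wkeq(\Sigma^{\infty}\BGL_{+})[\beta^{-1}]$, and then apply $\Qsst$ to both sides. The Bott element $\beta$ referred to in the proposition is the same one appearing in that Snaith-type splitting, transported to $\pi_{2,1}\Qsst\Sigma^{\infty}\BGL_{+}$ via the monoidal natural transformation $\id\to\Qsst$ of Theorem~\ref{thm:sstdef}(2).

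First I would identify $P\MGL^{\sst}$ with $\Qsst(P\MGL)$. Since $P\MGL=\bigvee_{n\in\Z}\Sigma^{2n,n}\MGL$ and $\Qsst:\SH(\C)\to\SH(\C)$ preserves coproducts and is triangulated (Theorem~\ref{thm:sstdef}(1)), we have
$$
P\MGL^{\sst}=\bigvee_{n\in\Z}\Sigma^{2n,n}\MGL^{\sst}\wkeq\bigvee_{n\in\Z}\Sigma^{2n,n}\Qsst\MGL\wkeq\Qsst(P\MGL).
$$
Next I would write the Gepner–Snaith localization as the sequential homotopy colimit
$$
(\Sigma^{\infty}\BGL_{+})[\beta^{-1}]\wkeq\hocolim_{\N}\bigl(\Sigma^{\infty}\BGL_{+}\xrightarrow{\beta}\Sigma^{-2,-1}\Sigma^{\infty}\BGL_{+}\xrightarrow{\beta}\cdots\bigr)
$$
and apply $\Qsst$. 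Combining with the previous paragraph yields
$$
P\MGL^{\sst}\wkeq\Qsst\bigl((\Sigma^{\infty}\BGL_{+})[\beta^{-1}]\bigr)\wkeq(\Qsst\Sigma^{\infty}\BGL_{+})[\beta^{-1}],
$$
where in the last step we use that $\Qsst$ commutes with $\hocolim_{\N}$ and is lax monoidal so that multiplication by $\beta$ on the left is taken to multiplication by its image on the right.

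The main obstacle is the last equivalence, i.e.\ commuting $\Qsst$ past the filtered homotopy colimit defining the inversion of $\beta$. This is precisely the ingredient already invoked in the $K$-theory analogue above (the parenthetical ``$\Qsst$ and $\hocolim_{\N}$ commute''), and the justification is the same: the diagonal of a simplicial spectrum commutes with sequential colimits by Lemma~\ref{lem:hocolim} (applied to the constant diagrams forming the layers of the telescope), the filtered colimit in the definition of $F(\Delta^{d}_{top},-)$ in equation~(\ref{eqn:fundel}) likewise commutes with sequential colimits, and the lax symmetric monoidal fibrant replacement $\LL'D^{2}$ of Theorem~\ref{thm:laxfib} respects sequential homotopy colimits since it is built from $\hocolim_{\II}$ of spacewise-small constructions. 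With that verified, the composition $P\MGL\wkeq(\Sigma^{\infty}\BGL_{+})[\beta^{-1}]$ is carried by $\Qsst$ to the asserted equivalence $P\MGL^{\sst}\wkeq(\Qsst\Sigma^{\infty}\BGL_{+})[\beta^{-1}]$.
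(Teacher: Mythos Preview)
Your proposal is correct and follows essentially the same route as the paper: invoke the Gepner--Snaith equivalence $P\MGL\wkeq(\Sigma^{\infty}\BGL_{+})[\beta^{-1}]$, use that $\Qsst$ preserves coproducts to identify $P\MGL^{\sst}$ with $\Qsst(P\MGL)$, and then use that $\Qsst$ commutes with $\hocolim_{\N}$ to pass the localization through. The paper's proof is a one-line citation of exactly these three ingredients; you have simply unpacked them, and your additional paragraph sketching why $\Qsst$ commutes with sequential homotopy colimits goes slightly beyond what the paper itself justifies.
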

\begin{proof}
 This follows from \cite[Corollary 3.10]{GepnerSnaith}, that $\Qsst$ commutes with coproducts, and that $\Qsst$ and $\hocolim_{\N}$ commute.
\end{proof}

\subsection{Algebraic equivalence}
A fundamental property of morphic cohomology and semi-topological $K$-theory is that $L^{q}H^{2q}(X)$ and $K_{0}^{sst}(X)$  are computable respectively in terms of the group of codimension $q$-cycles modulo algebraic equivalence and   the Grothendieck group of vector bundles modulo algebraic equivalence. This relationship can be generalized to a wide class of motivic spectra.

\begin{definition}\label{def:algeq}
Let $A$ be a presheaf of sets or groups on $\Sch/\C$. 
Two elements  $\alpha, \beta\in A(X)$ are 
said to be \textit{algebraically equivalent} if
there is a smooth, connected curve $C$, two closed points $c_{1}$, $c_{2}\in C$  and an element $\gamma\in A(X\times C)$ such that $\gamma|_{X\times c_{1}} = \alpha$ and $\gamma|_{X\times c_{2}} = \beta$. Write $\sim_{alg}$ for the equivalence relation that this generates.
\end{definition}

\begin{lemma}\label{lem:p0algeq}
 Let $E:\Sch/\C^{op}\to \sset$ be a presheaf of Kan complexes. Then $\pi_{0}E(X\times\Delta^{\bullet}_{top}) = \pi_{0}E(X)/\sim_{alg}$.
\end{lemma}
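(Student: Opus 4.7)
The plan is to compute $\pi_0$ of the diagonal $E^{\sst}(X) = \diag(d\mapsto E(X\times \Delta^d_{top}))$ using the standard coequalizer formula for $\pi_0$ of a bisimplicial set of Kan complexes, and then match the resulting quotient of $\pi_0 E(X)$ with $\sim_{alg}$. Since $E$ takes values in Kan complexes and filtered colimits of Kan complexes are Kan, each $E(X\times\Delta^d_{top})$ is a Kan complex and
\[
\pi_0 E(X\times\Delta^{\bullet}_{top}) = \operatorname{coeq}\bigl(\pi_0 E(X\times\Delta^1_{top}) \rightrightarrows \pi_0 E(X)\bigr),
\]
the two maps being restriction along the two vertex inclusions $\Delta^0_{top}\rightrightarrows\Delta^1_{top}$. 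Since $E(X\times\Delta^1_{top})=\colim_{\Delta^1_{top}\to Y(\C)}E(X\times Y)$ is a filtered colimit, $\pi_0$ commutes with it, so the generating relation $\sim$ on $\pi_0 E(X)$ is: $\alpha\sim\beta$ whenever there exist $Y\in\Sch/\C$, a continuous path $\Delta^1_{top}\to Y(\C)$ with endpoints $c_1,c_2$, and $\gamma\in\pi_0 E(X\times Y)$ with $\gamma|_{X\times c_1}=\alpha$ and $\gamma|_{X\times c_2}=\beta$.

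For the inclusion $\sim_{alg}\subseteq\sim$: a smooth connected curve $C$ over $\C$ has path-connected $C(\C)$, so any witness $(C,c_1,c_2,\gamma)$ for algebraic equivalence yields a continuous path $\Delta^1_{top}\to C(\C)$ between $c_1$ and $c_2$, supplying a generator of $\sim$. For the reverse inclusion: a continuous path in $Y(\C)$ places $c_1,c_2$ in the same connected component, and I would then invoke the classical fact that two complex points of a finite-type $\C$-scheme lying in the same connected component of $Y(\C)$ can be joined by a chain of irreducible closed curves $C_1,\ldots,C_n\subseteq Y$, with intermediate points $c_1=p_0,p_1,\ldots,p_n=c_2$ satisfying $p_{i-1},p_i\in C_i(\C)$. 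Taking the normalization $\widetilde{C}_i\to C_i$ yields a smooth connected curve, and pulling back $\gamma$ along $X\times \widetilde{C}_i\to X\times Y$ witnesses $\gamma|_{X\times p_{i-1}}\sim_{alg}\gamma|_{X\times p_i}$ directly from the definition. Concatenating these elementary algebraic equivalences gives $\alpha\sim_{alg}\beta$.

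The main obstacle is the classical chain-of-curves statement invoked in the second direction. It rests on three inputs: each irreducible component of $Y$ is connected in the analytic topology; the connected components of $Y(\C)$ are therefore unions of irreducible components linked through nonempty pairwise intersections (a finite-graph connectedness argument on the set of irreducible components); and any two closed points of an irreducible positive-dimensional complex variety lie on a common irreducible curve (via iterated generic hyperplane sections of a projective compactification passing through the two points). Assembling these produces the required chain.
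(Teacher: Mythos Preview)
Your proposal is correct and follows essentially the same route as the paper: both compute $\pi_0$ of the diagonal via the coequalizer $\pi_0 E(X\times\Delta^1_{top})\rightrightarrows \pi_0 E(X)$, then identify the resulting relation with $\sim_{alg}$ using path-connectedness of $C(\C)$ for one direction and a curve-connectedness fact for the other. The only differences are cosmetic: the paper first replaces $E$ by $\ihom(X,E)$ to reduce to $X=\spec(\C)$, and for the hard direction it cites Mumford (two points of an irreducible quasi-projective variety lying in the same analytic component lie on a smooth connected curve) rather than spelling out the chain-of-curves-plus-normalization argument you give.
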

\begin{proof}
 We have a coequalizer diagram 
$$
\pi_{0}E(X\times\Delta^{1}_{top})\rightrightarrows\pi_{0} E(X) \to \pi_{0}E^{\sst}(X),
$$
where the left hand arrows are induced by the respective inclusions $\Delta^{0}_{top}\to \Delta^{1}_{top}$ at $0$ and $1$. Replacing $E$ by $\ihom(X,E)$, it suffices to treat the case $X=\spec(\C)$.

If $C$ is connected, any two points $c_{0},c_{1}\in C(\C)$  can be joined by a continuous map $f:\Delta^{1}_{top}\to C(\C)$ such that $f(i) = c_{i}$.  
It follows that we have a well-defined surjection $\pi_{0}E(\C)/\sim_{alg} \to \pi_{0}E^{\sst}(\C)$. 

For injectivity, 
suppose that $\beta$, $\alpha \in \pi_{0}E(\C)$ map to the same element of  $\pi_{0}E^{\sst}(\C)$. This means that there is a $Y\in \Sch/\C$, a continuous map $g:\Delta^{1}_{top}\to Y(\C)$ and a $\gamma\in F(Y)$ such that $\alpha= \gamma|_{g(0)}$ and $\beta = \gamma|_{g(1)}$. 
The required injectivity follows easily from \cite[p. 56]{Mumford}, which asserts that if $W$ is irreducible and quasi-projective and
$w_0$, $w_1\in W(\C)$ lie in the same topological component, then there exists  
a smooth connected curve joining these points. 
\end{proof}

\begin{proposition}\label{prop:algeq}
 Suppose that $E^{k,q}(X) = 0$ for all smooth $X$ and $k>2q$. Then 
$E^{2q,q}_{\sst}(X) = E^{2q,q}(X)/\sim_{alg}$.
\end{proposition}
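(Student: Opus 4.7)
The plan is to combine Corollary \ref{cor:omega} with Lemma \ref{lem:p0algeq}, reducing the identification to a careful reindexing. First I would translate the cohomological hypothesis into the homotopical one: writing $\ppsh_{-k,-q}E(X) = E^{k,q}(X)$, the vanishing $E^{k,q}(X)=0$ for $k>2q$ becomes $\ppsh_{s,-q}E(X) = 0$ for $s < -2q$, which is exactly the connectivity hypothesis of Corollary \ref{cor:omega}. Thus $\Qsst E$ is almost fibrant on $\sm/\C$, and for smooth $X$ the corollary gives
$$
E^{2q,q}_{\sst}(X) \;=\; \pi_{0}\, E^{\sst}_{q}(X),
$$
where $E_{q}$ is shorthand for the $q$-th level of $E^{\fib}$ (and for $q<0$ we interpret $E_{q}^{\sst}$ as $\Omega_{\P^{1}}^{|q|}E_{0}^{\sst}$ as in Corollary \ref{cor:omega}).

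Next I would apply Lemma \ref{lem:p0algeq} to the presheaf of Kan complexes $E_{q}^{\fib}$ (Kan because $E^{\fib}$ is motivically fibrant, so each level is fibrant in $\MMb(\sch/\C)$). This gives
$$
\pi_{0} E^{\sst}_{q}(X) \;=\; \pi_{0} E^{\fib}_{q}(X)/\!\sim_{alg}.
$$

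Finally, since $E^{\fib}$ is an $\Omega_{\P^{1}}$-spectrum with motivically fibrant levels, for smooth $X$ the usual adjunction yields
$$
\pi_{0} E^{\fib}_{q}(X) \;=\; [X_{+},\, E^{\fib}_{q}]_{\hb(\C)} \;=\; [X_{+},\, \Sigma^{2q,q}E]_{\SH(\C)} \;=\; E^{2q,q}(X),
$$
and chaining the three identifications produces the desired equality. The identification $\pi_{0}E^{\fib}_{q}(X)=E^{2q,q}(X)$ works uniformly for $q$ of either sign, using the $\Omega_{\P^{1}}$-spectrum structure to pass from $E^{\fib}_{q}$ to a shift of $\Sigma^{\infty}_{\P^{1}}E$ in the appropriate direction.

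The argument is essentially a straightforward bookkeeping exercise once the three inputs (Corollary \ref{cor:omega}, Lemma \ref{lem:p0algeq}, and the standard $\Omega_{\P^{1}}$-spectrum adjunction) are in place; the main point requiring care is simply the translation between the cohomological grading of the proposition and the homotopical grading in which the preceding results are phrased. No further geometric input (beyond what already appears in Lemma \ref{lem:p0algeq}, where Mumford's connectedness statement was used) is needed.
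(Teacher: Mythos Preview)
Your proof is correct and follows exactly the paper's approach: the paper's proof simply invokes Corollary~\ref{cor:omega} to get $E^{2q,q}_{\sst}(X)=\pi_{0}E_{q}^{\sst}(X)$ and then Lemma~\ref{lem:p0algeq}, leaving the reindexing and the identification $\pi_{0}E^{\fib}_{q}(X)=E^{2q,q}(X)$ implicit, which you have spelled out in detail.
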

\begin{proof}
 By Corollary \ref{cor:omega} we have that $E^{2q,q}_{\sst}(X) = \pi_{0}E_{q}^{sst}(X)$ and so the result follows from Lemma \ref{lem:p0algeq}.
\end{proof}

\begin{corollary}\label{cor:mglalgeq}
Let $X$ be smooth, then for any $q$ there are natural isomorphisms 
$$
\Omega^{q}(X)/\sim_{alg} \xrightarrow{\iso}\MGL^{2q,q}(X)/\sim_{alg} \xrightarrow{\iso}  \MGL_{\sst}^{2q,q}(X)  
$$
 where $\Omega^*(-)$ is Levine-Morel algebraic cobordism.
\end{corollary}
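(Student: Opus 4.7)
The plan is to obtain the right-hand isomorphism by applying Proposition \ref{prop:algeq} to $E=\MGL$, and the left-hand isomorphism by pushing Levine's comparison theorem between algebraic cobordism and motivic cobordism through the quotient by algebraic equivalence.

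First I would verify the hypothesis of Proposition \ref{prop:algeq} for $E=\MGL$, namely the vanishing $\MGL^{p,q}(X)=0$ for all smooth $X$ and all $p>2q$. Over $\C$, which has finite cohomological dimension, Levine's convergence theorem for the slice tower of $\MGL$ applies, exactly as in the proof of Theorem \ref{thm:toppt}. By the Hopkins--Morel identification of the slices of $\MGL$, each $s_q\MGL$ is (a wedge of shifts of) $\Sigma^{2q,q}\MZ$. Since $H^{p,q}(X,\Z)=0$ for smooth $X$ and $p>2q$, each slice layer satisfies $(s_q\MGL)^{p,q}(X)=0$ for $p>2q$, and slice convergence propagates this vanishing to $\MGL$ itself. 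Proposition \ref{prop:algeq} then yields the identification $\MGL^{2q,q}_{\sst}(X)=\MGL^{2q,q}(X)/\sim_{alg}$, which is the right-hand isomorphism.

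For the left-hand isomorphism, I would invoke Levine's comparison theorem between algebraic and motivic cobordism: for smooth $X$ over a field of characteristic zero, the canonical orientation induces an isomorphism
$$\Omega^{q}(X)\xrightarrow{\iso}\MGL^{2q,q}(X),$$
natural in $X\in\sm/\C$. Since algebraic equivalence as formulated in Definition \ref{def:algeq} is defined purely in terms of the presheaf structure via restrictions along closed points of a smooth connected curve, a natural isomorphism of presheaves identifies the relation $\sim_{alg}$ on the two sides, and therefore descends to the quotients.

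The main obstacle in executing this plan is the $\MGL$-vanishing $\MGL^{p,q}(X)=0$ for $p>2q$; while well known, it is not elementary, relying on the combination of Levine's slice convergence over fields of finite cohomological dimension and the Hopkins--Morel determination of the slices of $\MGL$. Everything else is either formal from earlier sections of the paper or a direct appeal to Levine's comparison isomorphism.
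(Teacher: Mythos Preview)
Your proposal is correct and follows essentially the same route as the paper: the right-hand isomorphism comes from Proposition \ref{prop:algeq} once one knows $\MGL^{p,q}(X)=0$ for $p>2q$ (which the paper likewise deduces from the slice spectral sequence/Hopkins--Morel, see Section \ref{sec:bott}), and the left-hand isomorphism is Levine's comparison $\Omega^{q}(X)\cong\MGL^{2q,q}(X)$ passed to the quotient by $\sim_{alg}$ via naturality. Your justification is in fact a bit more explicit than the paper's one-line appeal to the slice spectral sequence.
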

\begin{proof}
Since $\MGL^{p,q}(X) = 0$ for $p\geq 2q$ (which follows for example from the slice spectral sequence, see Section \ref{sec:bott}), the previous result implies the second isomorphism. Levine \cite[Theorem 3.1]{Levine:cob} shows that $\Omega^{q}(X)\to \MGL^{2q,q}(X)$ is a natural isomorphism for any smooth $X$, which implies the first isomorphism.
\end{proof}

\subsection{Coefficients of semi-topological theories}\label{sub:coeff}
Now we turn to a generalization of the relationships $L^{q}H^{p}(\C)=H^{p}(\pt)$ and $K^{\sst}_0(\pt)=\ku^0(\pt)$. 

We begin by recalling Voevodsky's slice tower.
Let $\SH^{eff}(\C)\subseteq \SH(\C)$ be the smallest localizing subcategory containing all $X_{+}$, $X\in \Sm/\C$ and let $\Sigma^{q}_{\P^1}\SH^{eff}(\C)\subseteq \SH(\C)$ be the smallest localizing subcategory containing $\Sigma^{q}_{\P^1}E$ for $E$ in $\SH^{eff}(\C)$. The inclusion $i_{q}:\Sigma^{q}_{\P^1}\SH^{eff}(\C)\subseteq \SH(\C)$ has a right adjoint, $r_{q}$. Define $f_{q} = i_{q}r_{q}$. There is a natural map $f_{q} E \to E$ which is the universal for maps from objects in $\Sigma^{q}_{\P^1}\SH^{eff}(\C)$ to $E$. Define $s_{q}E = \cofiber(f_{q+1}E\to f_{q}E)$ which is the \textit{$q$th slice} of $E$.

Using Voevodsky's computation \cite{Voev:slice} that $s_{0}\SS = \MZ$, Pelaez \cite{Pelaez} shows that the slices $s_{q}E$ are all $\MZ$-modules. {\O}stv{\ae}r-R{\"o}ndigs \cite{RO:MZ} show that Voevodsky's big category of motives $\DM(\C)$ is equivalent, as a monoidal, triangulated category, to the homotopy category of $\MZ$-modules. Write $\DM^{eff}(\C)$ for the smallest localizing subcategory of $\DM(\C)$ containing all $X_{+}\wedge \MZ$, $X\in \Sm/\C$.

\begin{lemma}\label{lem:motcon}
 For any $E$ in $\DM^{eff}(\C)$ we have isomorphisms
$$
\ppsh_{*,0}(\Qsst E)(\C) \xrightarrow{\iso} \pi_{*}\L\Re_{\C}E.
$$
\end{lemma}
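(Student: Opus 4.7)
The comparison map is obtained by evaluating the natural transformation $\Qsst E \to \R\msing_{\C}\L\RRe_{\C}E$ of Theorem \ref{thm:sstdef}(2) on $[\Sigma^{*,0}\spec(\C)_+, -]_{\SH(\C)}$ and applying the adjunction identification $[S^p, \R\msing_{\C}F]_{\SH(\C)} \iso \pi_{p}F$. My strategy is the standard ``reduce to generators'' argument for the localizing subcategory $\DM^{eff}(\C)$.

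First I would verify that both functors
\begin{equation*}
F_1(E) := \ppsh_{*,0}(\Qsst E)(\C) \qquad \text{and} \qquad F_2(E) := \pi_{*}\L\RRe_{\C}E
\end{equation*}
are triangulated and commute with arbitrary coproducts. For $F_1$ this is immediate from Theorem \ref{thm:sstdef}(1). For $F_2$, it follows because $\L\RRe_{\C}$ is a left adjoint (hence coproduct-preserving) and strong symmetric monoidal (hence triangulated). Since $\DM^{eff}(\C)$ is by definition the smallest localizing subcategory of $\SH(\C)$ containing $X_+ \wedge \MZ$ for $X \in \Sm/\C$, a standard argument with triangles and coproducts reduces the lemma to checking the isomorphism for $E = X_+ \wedge \MZ$.

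For such $E$, the topological side is easy: strong monoidality of $\L\RRe_{\C}$ combined with $\L\RRe_{\C}\MZ \wkeq H\Z$ and $\L\RRe_{\C}X_+ \wkeq \Sigma^{\infty}_+ \sing X(\C)$ gives $\pi_{*}\L\RRe_{\C}(X_+ \wedge \MZ) = H_{*}(X(\C);\Z)$. For the motivic side, assume first that $X$ is smooth and projective, so that $\MZ_{c}(X)$ is an almost fibrant model for $X_+ \wedge \MZ$ (Section \ref{sec:morphic}). The hypothesis of Corollary \ref{cor:omega} is satisfied for $X_+ \wedge \MZ$ (this is vanishing of motivic cohomology above the diagonal on smooth schemes), so Corollary \ref{cor:omega} yields
\begin{equation*}
\ppsh_{p,0}(\Qsst(X_+ \wedge \MZ))(\C) \iso \pi_{p}\,C_{*}z_{equi}(X,0)(\Delta^{\bullet}_{top}),
\end{equation*}
and the right-hand side equals $H_{p}(X(\C);\Z)$ by the Friedlander-Lawson-Dold-Thom theorem for zero cycles (compare \cite{FW:ratisos}). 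For a general smooth $X$, I would invoke resolution of singularities together with the $cdh$-descent property of $\Qsst$ from Corollary \ref{cor:fib} to reduce to the projective case via a $cdh$-hypercover.

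The main obstacle is the last step: verifying that the classical Dold-Thom isomorphism coincides with the map induced by the unit of the realization adjunction. Since both are natural transformations of coproduct-preserving, triangulated functors $\DM^{eff}(\C) \to \ab$ landing in singular homology, matching them formally reduces to a check on a single generator (e.g. $E = \MZ$ in degree zero), where both recover the canonical identification $\Z = H_{0}(\pt;\Z)$; nevertheless, making this comparison precise requires some care with the explicit simplicial models underlying $\Qsst$ and $\L\RRe_{\C}$.
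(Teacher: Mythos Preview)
Your approach is essentially the paper's: reduce via triangles and coproducts to $E = X_+ \wedge \MZ$ with $X$ smooth projective, then identify both sides with $H_*(X(\C);\Z)$ using the Dold--Thom theorem. Two small points of comparison. First, your cdh-hypercover step is unnecessary: over $\C$, resolution of singularities shows that smooth \emph{projective} varieties already generate $\DM^{eff}(\C)$ as a localizing subcategory, so you may restrict to them in the very first reduction. Second, and more usefully, the paper sidesteps your ``main obstacle'' by computing $\pi_*\L\RRe_{\C}E$ with the \emph{same} model used for the semi-topological side: rather than invoking $\L\RRe_{\C}\MZ\simeq H\Z$ abstractly, the paper shows directly that the levelwise realization of the almost fibrant model $\MZ_c(X)$ is an $\Omega_{\tilde{S}^{2}}$-spectrum (its structure maps induce the homology suspension isomorphism, again by Dold--Thom). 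The comparison map is then visibly an isomorphism level by level, and no matching of abstract identifications is required.
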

\begin{proof}
 Since $\L\Re_{\C}$ and 
$\ppsh_{*,0}$ are compatible with triangles and coproducts, it suffices to verify the lemma in the case 
$E = X_+\wedge \MZ$, where $X$ is a smooth projective complex variety. We have the almost fibrant model $\MZ(X):=(C_*z_{equi}(X\times \A^{k},0))_{k\geq 0}$ for $X_{+}\wedge \MZ$ (see Section \ref{sec:morphic}).
Therefore $\ppsh_{p,0}(\Qsst E)(\C) = \pi_{p}C_*z_{equi}(X,0)(\Delta^{\bullet}_{top})$ if $p>0$.

For projective $Y$, define $\mcal{Z}_{0}(Y) = (\coprod_{n\geq 0}\mathrm{Sym}_{n}Y(\C))^{+}$ where $(-)^+$ denotes the group completion of the displayed topological monoid, equipped with a topology via the quotient topology. 
For any quasi-projective complex variety $W$ define $\mcal{Z}_{0}(W) = \mcal{Z}_{0}(\overline{W})/\mcal{Z}_{0}(\overline{W}-W)$, where $W\subseteq \overline{W}$ is a projectivization. 
We have a natural homotopy equivalence $\sing\mcal{Z}_{0}(W)\wkeq \MZ_{c}^{\sst}(W)(\C)$. Indeed, by \cite[Proposition 3.1]{FW:ratisos}, there is a natural homotopy equivalence 
$z_{equi}(W,0)(\Delta^{\bullet}_{top})\wkeq \sing\mcal{Z}_{0}(W)$ and the natural map $z_{equi}(W,0)(\Delta^{\bullet}_{top})\to C_*z_{equi}(W,0)(\Delta^{\bullet}_{top})=\MZ_{c}^{\sst}(W)(\C)$ is a homotopy equivalence, see e.g. \cite[Lemma 1.2]{FW:compK}. 

By the Dold-Thom theorem, $\pi_{n}\mcal{Z}_{0}(W)= H^{BM}_{n}(W(\C),\Z)$, where $H^{BM}(-)$ is Borel-Moore singular homology. For a projective complex variety $X$, the adjoint of the map
$\mcal{Z}_{0}(X\times\A^{k})\wedge \P^{1}(\C) \to \mcal{Z}_{0}(X\times\A^{k+1})$
induces the suspensions isomorphism $\tilde{H}_{n}(X(\C)_{k}\wedge S^{2k})\iso \tilde{H}_{n+2}(X(\C)_+\wedge S^{2k+2})$ in homology. Applying $\sing$ and using the above identifications, 
$$
z_{equi}(X\times \A^{k+1},0)(\Delta^{\bullet}_{top})  \to \Omega_{\tilde{S^2}}z_{equi}(X\times \A^{k},0)(\Delta^{\bullet}_{top})
$$
is a homotopy equivalence. It follows that $\L\Re_{\C}\MZ(X)$ is an $\Omega_{\tilde{S^2}}$-spectrum. In particular if $p\geq 0$ then $\ppsh_{p,0}(\Qsst \MZ(X))(\C) \xrightarrow{\iso} \pi_{p}\L\Re_{\C}(\MZ(X))$ is an isomorphism and both groups are zero if $p<0$

\end{proof}

\begin{theorem}\label{thm:toppt}
 If $E$ is in $\SH(\C)^{eff}$ and $q\geq 0$, then
$$
E_{\sst}^{-p,q}(\C)= \ppsh_{p,-q}(\Qsst E)(\C) \xrightarrow{\iso} \pi_{p}\L\Re_{\C}E.
$$
\end{theorem}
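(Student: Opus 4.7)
The plan is to combine Lemma~\ref{lem:motcon} and Corollary~\ref{cor:omega} with a slice-filtration argument, reducing the theorem to its slices, where the required connectivity is automatic.

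Consider the slice tower $\cdots \to f_{q+1}E \to f_q E \to \cdots \to f_0 E = E$ of $E \in \SH(\C)^{eff}$, which converges to $E$ by Levine's slice convergence theorem over $\C$. Using Pelaez's theorem that the slices $s_q E$ are $\MZ$-modules, together with the {\O}stv{\ae}r--R{\"o}ndigs equivalence $\MZ\text{-Mod}\wkeq \DM(\C)$, I write each slice as $s_q E \wkeq \Sigma^{2q,q}\tilde s_q E$ with $\tilde s_q E \in \DM^{eff}(\C)$.

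I then verify the connectivity hypothesis of Corollary~\ref{cor:omega} for each slice. For smooth $X$ and $j \geq 0$,
$$
\ppsh_{i,-j}(s_q E)(X) \iso \ppsh_{i-2q,-j-q}(\tilde s_q E)(X) \iso H^{2q-i,\, j+q}(X,\tilde s_q E),
$$
which vanishes for $i < -2j$ by Voevodsky's vanishing of motivic cohomology above cohomological degree $2w$ on smooth varieties (with effective coefficients). Corollary~\ref{cor:omega} then gives that $\Qsst s_q E$ is almost fibrant on $\sm/\C$, so $\ppsh_{p,-q_0}(\Qsst s_q E)(\C) = \pi_{p+2q_0}((\Qsst s_q E)_{q_0}(\C))$ for $p \geq -2q_0$. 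Combined with the identification $\Qsst s_q E(\C) \wkeq \L\RRe_\C s_q E$ of $\tilde S^2$-spectra from Theorem~\ref{thm:sstdef}(2)---which makes $\Qsst s_q E(\C)$ an $\Omega_{\tilde S^2}$-spectrum---I conclude $\ppsh_{p,-q_0}(\Qsst s_q E)(\C) \iso \pi_p\L\RRe_\C s_q E$ for all $p$ and $q_0 \geq 0$ (both sides vanishing for $p < -2q_0$ by connectivity).

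The cofiber sequences $f_{q+1}E \to f_q E \to s_q E$ give rise under $\Qsst$ and $\L\RRe_\C$ to comparison spectral sequences abutting to $\ppsh_{p,-q_0}(\Qsst E)(\C)$ and $\pi_p\L\RRe_\C E$ respectively. The previous step shows the $E^1$-pages agree, the comparison being induced by the monoidal natural transformation $\Qsst \to \R\msing_\C\L\RRe_\C$. The topological spectral sequence converges by a standard Atiyah--Hirzebruch argument. The main obstacle is convergence of the motivic spectral sequence after applying $\Qsst$: one needs $\ppsh_{p,-q_0}(\Qsst f_q E)(\C) \to 0$ in the appropriate sense as $q \to \infty$. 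This requires transporting Levine's motivic convergence through $\Qsst$, which I would attack using the spectral sequence~(\ref{eqn:sstss}) for $f_q E$: the vanishing of $\ppsh_{*,*}(f_q E)(\C \times \Delta^d_{top})$ in a range determined by Levine translates, through the $E^2$-page, to vanishing of $\ppsh_{*,*}(\Qsst f_q E)(\C)$ in the same range.
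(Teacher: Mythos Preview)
Your overall architecture---compare the two spectral sequences coming from the slice tower under $\Qsst$ and $\L\Re_\C$---matches the paper's. There are, however, two substantive differences, and the second is a real gap.

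First, the paper begins with two reductions you omit: it reduces to $q=0$ via $\ppsh_{p,-q}E=\ppsh_{p+2q,0}(\Sigma^{q}_{\P^{1}}E)$, and then, because both $\ppsh_{*,0}\Qsst(-)(\C)$ and $\pi_*\L\Re_\C(-)$ are compatible with triangles and coproducts, it reduces to $E=X_+$ for $X$ smooth. Only then is the slice tower invoked. This makes the slice comparison step immediate from Lemma~\ref{lem:motcon} (which is stated for weight $0$), and more importantly it supplies a finite-dimensional object to which Levine's bounds apply. Your detour through Corollary~\ref{cor:omega} for each slice is unnecessary once one has the reduction to $q_0=0$, and your appeal to Theorem~\ref{thm:sstdef}(2) to conclude that $\Qsst s_q E(\C)$ is an $\Omega_{\tilde S^2}$-spectrum is not justified: that theorem only identifies $\Qsst E(\C)$ with $\L\Re_\C E$ in $\SH$, not levelwise.

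Second, and this is the genuine gap: your convergence argument for the $\Qsst$ tower is incomplete. You propose to push Levine's vanishing for $f_qE$ through the spectral sequence~(\ref{eqn:sstss}), but the $E^2$-term there involves $\ppsh_{*,*}(f_qE)(\Delta^d_{top})=\colim_{\Delta^d_{top}\to W(\C)}\ppsh_{*,*}(f_qE)(W)$ over \emph{all} $W\in\Sch/\C$, including singular ones, and Levine's connectivity bounds are for smooth schemes. Even granting a cdh version, for a general effective $E$ there is no uniform dimension to bound against. The paper's fix is specific and does not generalize: after reducing to $E=X_+$, convergence with finite coefficients is free from Theorem~\ref{thm:sstdef}(3) (since $\Qsst\wedge MA\simeq\id\wedge MA$), and rationally one uses Levine's result that the homotopy sheaves of $f_qX_+$ are torsion for $q\geq\dim(X)+1$, so $\Qsst(f_qX_+)\wedge M\Q=\ast$. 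This arithmetic fracture is the missing idea in your argument.
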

\begin{proof}
If  $q\geq 0$ then $\pi_{p,-q}E = \pi_{p+2q,0}(\Sigma_{\P^{1}}^{q}E)$. If $E$ is effective, then so is $\Sigma_{\P^{1}}^{q}E$. It thus suffices to verify the theorem  for $q=0$. 
 Since $\L\Re_{\C}$ and 
$\ppsh_{p,0}$ are compatible with triangles and coproducts, it suffices to consider the case $E=X_{+}$ for smooth $X$.  By \cite[Theorem 4, Proposition 6.9]{Levine:conv} the slice tower $\cdots \to f_{2}X_{+}\to f_{1}X_{+} \to f_{0}X_{+}= X_{+}$ converges. 
By Theorem \ref{thm:sstdef}(3), this implies that the tower obtained from applying $\Qsst\wedge MA$ also converges (where $MA$ is the Moore spectrum associated to a finite abelian group $A$). By \cite[Lemma 6.1]{Levine:comp}, if $q\geq \dim(X)+1$ then all of the homotopy sheaves of $f_{q}X_+$ are torsion. This implies that $\Qsst(f_{q}X_+)\wedge M\Q= *$ for $q\geq \dim(X)+1$ and so the tower $\cdots \to \Qsst (f_{2}X_{+})\to \Qsst (f_{1}X_{+}) \to \Qsst(X_{+})$ converges.
The associated topological tower $\cdots\to \L\Re_{\C}(f_{2}X_{+})\to \L\Re_{\C}(f_{1}X_{+})\to \L\Re_{\C}(X_{+})$ also converges by \cite[Theorem 5.2]{Levine:comp}.
We thus have a comparison of convergent spectral sequences
\begin{align*}
E_{p,q}^2=\ppsh_{p+q,0} (\Qsst s_qX_{+})(\C)&\Longrightarrow \ppsh_{p+q,0}(\Qsst X_{+})(\C) \\
&\downarrow \\
E^{2}_{p,q} = \pi_{p+q} \L\Re_{\C}( s_qX_{+}) &\Longrightarrow \pi_{p+q}X_{+}.
\end{align*}
Now if $E$ is in $\SH(\C)^{eff}$ then each $s_{q}E$ is in $\DM^{eff}(\C)$. In particular, Lemma \ref{lem:motcon} implies that the comparison map induces  an isomorphism on the $E^2$-pages of these spectral sequences. It follows that $\ppsh_{p,0}\Qsst E(\C) \xrightarrow{\iso} \pi_{p}\L\Re_{\C}E$ is an isomorphism.

\end{proof}
\begin{remark}
  Note that $\KGL$ is not effective and so Theorem \ref{thm:toppt} does not assert that $\KGL_{\sst}^{p,0}(\C)=\pi_{-p}\L\Re_{\C}(\KGL) = \KU^{p}(\pt)$ (which would be absurd). However the theorem does apply to the $\P^1$-connective $K$-theory, $\kgl:=f_{0}\KGL$. 
One has $\KGL_{\sst}^{p,0}(\C) = \kgl_{\sst}^{p,0}(\C)$ and it can be shown that $\L\RRe_{\C}(\kgl) = \ku$.

The theorem applies to $\MGL$ and so $\MGL_{\sst}^{p,q}(\C) = MU^{p}$ for all $q\geq 0$. However the groups $\MGL_{\sst}^{p,q}(\C)$ for $q\leq 0$ are also interesting. 
See Remark \ref{rem:fullpt} for a complete calculation of these groups.

\end{remark}

\section{Applications to semi-topological cobordism}\label{sec:bott}
In this section we give two applications to semi-topological cobordism, which rely on the multiplicative nature of $\Qsst$. 
The first is a natural isomorphism of the form $(\oplus_{q}\MGL^{2q+*,q}_{\sst}(X))[s^{-1}] \iso \oplus_{q}MU^{2q+*}(X)$,
where $s$ is a lift of the Friedlander-Mazur $s$-element in morphic cohomology. The second is a semi-topological Conner-Floyd isomorphism relating semi-topological $K$-theory and semi-topological cobordism.

Voevodsky's slice spectral sequence is recalled in Section \ref{sub:coeff}. Write $\L^{*}$ for the Lazard ring, where the grading is such that $MU^{2q} = \L^{q}$.
An unpublished result of Hopkins-Morel implies that the slices  for $\MGL$  are $s_q\MGL =\Sigma_{\P^1}^{q}\MZ\otimes \L^{-q}$. A proof of this result has recently appeared in work of  Hoyois \cite{Hoyois}.  We have an exact couple given by setting
 $D_{2}^{p,q,b} = \ppsh_{-p-q,b}(f_{-q}\MGL)(X)$ and $E_{2}^{p,q,b} = \ppsh_{-p-q,b}(s_{-q}\MGL)(X) = H^{p-q}_{\mcal{M}}(X,\Z(b-q))\otimes \L^{q}$. The resulting spectral sequence
\begin{equation*}
E_{2}^{p,q,b}(alg) = H_{\mcal{M}}^{p-q}(X,\Z(b-q))\otimes \L^{q}\Longrightarrow
    \MGL^{p+q,b}(X)
\end{equation*}
is strongly convergent (see \cite[Lemmas 7.9,7.10]{Hoyois}).
Since $\MGL$ is a commutative ring spectrum  this is a multiplicative spectral sequence. 

We have cofiber sequences
$\Qsst (f_{q+1} \MGL) \to \Qsst (f_{q}\MGL) \to \Qsst (s_{q}\MGL)$, which  give rise to the
spectral sequence

\begin{equation*}\label{eqn:sstspseq}
E_{2}^{p,q,b}(sst) = L^{b-q}H^{p-q}(X)\otimes \L^{q}\Longrightarrow
    \MGL^{p+q,b}_{sst}(X),
\end{equation*}
which is strongly convergent, one may argue as in the proof of Theorem \ref{thm:toppt}. Alternatively using that $(\ppsh_{p,b}f_{q}\MGL)_{Nis} =0$ for $p-b<q$, 
see \cite[Lemma 7.10]{Hoyois}, one can show that $\ppsh_{p,b}\Qsst(f_{q}\MGL)(X) = 0$ for $p<q-b-\dim X$. 
It is multiplicative because $\Qsst$ is lax symmetric monoidal and so we have pairing of slice towers.

\begin{proposition}
The spectral sequence 
\begin{equation*}
   E_{2}^{p,q}(top) 
 = H^{p-q}(X(\C), \L^{q})
\Longrightarrow MU^{p+q}(X(\C)),
 \end{equation*}
associated to the cofiber sequences
\begin{equation*}
\L\Re_{\C}(f_{q+1}\MGL) \to \L\Re_{\C}(f_{q}\MGL) \to \L\Re_{\C}(s_{q}\MGL),
\end{equation*}
is the spectral arising from the Postnikov tower.
\end{proposition}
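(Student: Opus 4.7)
The plan is to promote the claimed spectral-sequence identification to a levelwise identification of the realized slice tower $\{\L\Re_{\C}(f_q\MGL)\}_q$ with the Postnikov tower $\{\tau_{\geq 2q}MU\}_q$ of $MU$. Two inputs feed into this. First, the Hopkins--Morel formula, as proved by Hoyois, gives $s_q\MGL \simeq \Sigma_{\P^1}^{q}\MZ\otimes \L^{-q}$. Second, topological realization carries $\MZ$ to $H\Z$: using the equidimensional-cycle model of Section~\ref{sec:morphic} and the arguments of Lemma~\ref{lem:motcon}, one has $\L\Re_{\C}(\MZ)_k \simeq \sing \mcal{Z}_0(\A^k)$, which is $K(\Z,2k)$ by Dold--Thom, and the resulting structure maps yield the standard suspension isomorphism. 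Combined with $\Re_{\C}(\P^1)\simeq S^2$, we get
$$
\L\Re_{\C}(s_q\MGL)\simeq \Sigma^{2q}H\Z\otimes \L^{-q} = \Sigma^{2q} H(\L^{-q}),
$$
which is precisely the $2q$-th Postnikov layer of $MU$, since $\pi_{2q}MU=\L^{-q}$.

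Next I would identify $\L\Re_{\C}(f_q\MGL)$ with $\tau_{\geq 2q}MU$. Since $\Sigma_{\P^1}^{q}\SH^{eff}(\C)$ is the smallest localizing subcategory of $\SH(\C)$ generated by $\Sigma_{\P^1}^{q}X_+$ for smooth $X$, and $\L\Re_{\C}(\Sigma_{\P^1}^{q}X_+)\simeq \Sigma^{2q}X(\C)_+$ is $2q$-connective, and $2q$-connective spectra are closed under colimits in $\SH$, the functor $\L\Re_{\C}$ carries $\Sigma_{\P^1}^{q}\SH^{eff}(\C)$ into $2q$-connective spectra. In particular $\L\Re_{\C}(f_q\MGL)$ is $2q$-connective. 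On the other hand, the cofiber of the canonical map $\L\Re_{\C}(f_q\MGL)\to MU$ has a finite filtration (from the tower $f_q\MGL\to f_{q-1}\MGL\to\cdots\to f_0\MGL=\MGL$) whose successive quotients are $\L\Re_{\C}(s_j\MGL)=\Sigma^{2j}H(\L^{-j})$ for $0\leq j<q$, each concentrated in homotopy degree $2j<2q$. Hence this cofiber has $\pi_n=0$ for $n\geq 2q$, so the map is an isomorphism on $\pi_n$ for $n\geq 2q$. Combined with $2q$-connectivity, the defining fiber sequence $\tau_{\geq 2q}MU\to MU\to \tau_{<2q}MU$ identifies $\L\Re_{\C}(f_q\MGL)\simeq \tau_{\geq 2q}MU$.

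The tower maps are then forced to agree with the canonical ones: the map $\tau_{\geq 2q}MU\to MU$ is the universal map from a $2q$-connective spectrum to $MU$, so any map $\L\Re_{\C}(f_{q+1}\MGL)\to \L\Re_{\C}(f_q\MGL)$ commuting with the given maps to $MU$ must correspond to the standard $\tau_{\geq 2q+2}MU\to \tau_{\geq 2q}MU$. The realized slice tower is therefore the Postnikov tower, and the exact couple defining its spectral sequence coincides with that of the Postnikov tower, giving the claimed identification of spectral sequences.

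The main obstacle is the identification $\L\Re_{\C}(\MZ)\simeq H\Z$ together with the verification that the structure maps are the standard ones, since this is what converts the abstract Hopkins--Morel slice into an honest Eilenberg--MacLane spectrum in the correct degree. Once this is in hand, the connectivity argument for $\Sigma_{\P^1}^{q}\SH^{eff}(\C)$ and the uniqueness property of Postnikov sections are formal consequences of standard properties of $\SH$.
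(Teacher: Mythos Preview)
Your identification of the realized slices with the Postnikov layers of $MU$ is fine, and your filtration argument correctly shows that the map $\L\Re_{\C}(f_q\MGL)\to MU$ is an isomorphism on $\pi_n$ for $n\geq 2q$. The gap is in your connectivity step: the assertion that $\L\Re_{\C}$ carries all of $\Sigma_{\P^1}^{q}\SH^{eff}(\C)$ into $2q$-connective spectra is false. A localizing subcategory is by definition closed under \emph{both} shifts, not merely under colimits; in particular $\Sigma_{\P^1}^{q}\SH^{eff}(\C)$ contains $\Sigma^{-n,0}\Sigma_{\P^1}^{q}X_+$ for every $n\geq 0$, whose realization $\Sigma^{2q-n}X(\C)_+$ is certainly not $2q$-connective. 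So knowing only that $f_q\MGL$ lies in $\Sigma_{\P^1}^{q}\SH^{eff}(\C)$ gives no control on the connectivity of its realization, and the identification $\L\Re_{\C}(f_q\MGL)\simeq \tau_{\geq 2q}MU$ does not follow.

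The paper fills this gap by invoking a specific connectivity result for $f_q\MGL$ itself: Hoyois shows that $(\ppsh_{p,b}f_q\MGL)_{Nis}=0$ for $p-b<q$ (so $f_q\MGL$ is ``topologically $q$-connected'' in Levine's sense), and Levine's comparison theorem then yields that $\L\Re_{\C}(f_q\MGL)$ is $(2q-1)$-connected. Once this is in hand your inductive identification with the Postnikov tower goes through, and indeed the paper's proof is essentially that induction. If you want to avoid citing Levine's theorem, the natural route is to prove instead that the realized tower converges (i.e.\ $\holim_q \L\Re_{\C}(f_q\MGL)\simeq *$), since then $\L\Re_{\C}(f_q\MGL)$ is an iterated extension of the layers $\Sigma^{2j}H(\L^{-j})$ for $j\geq q$ and hence $2q$-connective; but that convergence statement is exactly the content of Levine's result and is not a formal consequence of the categorical description of $\Sigma_{\P^1}^{q}\SH^{eff}(\C)$.
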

\begin{proof}
By \cite[Lemma 7.10]{Hoyois}, $f_{q}\MGL$ is ``topologically $q$-connected'' (in the terminology of \cite{Levine:comp}). Applying \cite[Theorem 5.2]{Levine:comp} to $f_{q}\MGL$, we conclude that $\L\Re_{\C}(f_{q}\MGL)$ is $2q-1$-connected. We have $\L\Re_{\C}(s_{q}\MGL) = \Sigma^{2q}H\pi_{2q}MU$ and so an easy inductive argument implies that $\L\RRe_{\C}$ applied to the slice tower for $\MGL$ agrees with the Postnikov tower.
\end{proof}

Immediate from the constructions is that the natural comparison maps
$$
\{E^{p,q,b}_r(alg)\}\to \{E^{p,q,b}_r(sst)\} \to \{E^{p,q}_r(top)\}
$$
are compatible with the multiplicative structures.

\begin{remark}\label{rem:fullpt}
 When $X=\C$, this comparison map of spectral sequence gives a complete calculation of the coefficients of $\MGL_{\sst}^{*,*}(\C)$, improving in this case the result of Theorem \ref{thm:toppt}.
Since $E^{p,q,b}_2(sst) = 0 = E^{p,q}_r(top)$ if $p\neq q$, both spectral sequences collapse. We have then that
$\MGL_{\sst}^{s,b}(\C) \to MU^{s}$ is an isomorphism if $s\leq 2b$ and 
 $\MGL_{\sst}^{s,b}(\C)=0$ if $s>2b$.
\end{remark}

\begin{theorem}
 Let $X$ be a smooth quasi-projective complex variety. If $b\geq \dim(X)$, the map  $\MGL^{s,b}_{\sst}(X) \to MU^{s}(X(\C))$ is an isomorphism.
\end{theorem}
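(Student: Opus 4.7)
The plan is to compare the strongly convergent semi-topological slice spectral sequence
\[
E_2^{p,q,b}(sst) = L^{b-q}H^{p-q}(X)\otimes \L^q \Longrightarrow \MGL^{p+q,b}_{\sst}(X)
\]
with the Postnikov spectral sequence
\[
E_2^{p,q}(top) = H^{p-q}(X(\C),\Z)\otimes \L^q \Longrightarrow MU^{p+q}(X(\C))
\]
identified in the preceding proposition, and reduce the statement to the Friedlander-Lawson/Friedlander-Mazur stabilization theorem for morphic cohomology. The comparison of spectral sequences is induced by the lax symmetric monoidal natural transformation $\Qsst \to \R\msing_{\C}\L\RRe_{\C}$ from Theorem~\ref{thm:sstdef}(2), applied to the slice tower of $\MGL$ (and is compatible with the identification $\L\RRe_{\C}\MGL \wkeq MU$). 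Strong convergence of both sides was already established earlier.

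Because $\MGL$ is effective, the slice $s_n\MGL$ vanishes for $n<0$, so in the indexing $E_2^{p,q,b}=\ppsh_{-p-q,b}(s_{-q}\MGL)(X)$, both spectral sequences are concentrated in the half-plane $q\leq 0$. For such $q$ and under the hypothesis $b\geq d:=\dim(X)$, the weight satisfies $b-q \geq b \geq d$. The Friedlander-Lawson/Friedlander-Mazur stabilization theorem for morphic cohomology of a smooth quasi-projective variety then supplies a natural isomorphism $L^mH^n(X)\xrightarrow{\iso} H^n(X(\C),\Z)$ whenever $m\geq d$. This is precisely the content of the $E_2$-level comparison, so the induced map on $E_2$-pages is an isomorphism in every bidegree $(p,q,b)$ that contributes nontrivially. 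Strong convergence of both spectral sequences then upgrades this to the claimed isomorphism of abutments.

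The only real obstacle is pinning down the precise reference to the stabilization theorem in the smooth quasi-projective setting (as opposed to the classical smooth projective case treated directly by Friedlander-Mazur); everything else---naturality of the slice tower, the lax monoidal structure of $\Qsst$, the multiplicative comparison of spectral sequences, and the vanishing range $\ppsh_{p,b}\Qsst(f_q\MGL)(X)=0$ for $p<q-b-\dim X$ that underwrites strong convergence---is already in place.
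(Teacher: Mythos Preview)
Your proposal is correct and follows essentially the same route as the paper: compare the semi-topological and topological spectral sequences, observe that both are supported in $q\leq 0$ (you via effectivity of $\MGL$, the paper via $\L^q=0$ for $q>0$), and then invoke $L^{m}H^{n}(X)\xrightarrow{\iso} H^{n}(X(\C))$ for $m\geq \dim X$ to match the $E_2$-pages. The paper also asserts the quasi-projective stabilization statement without a pinpoint citation, so your caveat about sourcing it is not a defect relative to the paper's own proof.
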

\begin{proof}
The comparison map $ L^{m}H^{n}(X) \to H^{n}(X(\C))$ is an isomorphism for all $n$ when $m\geq \dim(X)$. Consider the comparison map $\{E^{p,q,b}_r(sst)\}\to  \{E^{p,q}_r(top)\}$. 
Since $t\geq \dim X$, if $b-q<\dim X$ then $q>0$ and both sides are zero. Otherwise $q\leq 0$ and so $ L^{b-q}H^{p-q}(X)\otimes \L^{q} \to H^{p-q}(X(\C), \L^{q})$
is an isomorphism for all $p-q$. 
We conclude that $E^{p,q,b}_2(sst)\to  E^{p,q}_2(top)$ is an isomorphism for all $p,q$  which implies the result.
\end{proof}

\begin{corollary}
 There is an element $s\in \MGL_{sst}^{0,1}(\C)$ such that we have an isomorphism 
of graded rings 
$$
(\oplus_{q}\MGL^{2q+*,q}(X))[s^{-1}] = \oplus_{q}MU^{2q+*}(X).
$$
\end{corollary}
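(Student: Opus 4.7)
The plan is to define the element $s$ and then show that the natural comparison map becomes an isomorphism after inverting it. By Remark~\ref{rem:fullpt}, the comparison $\MGL_{\sst}^{0,1}(\C)\xrightarrow{\iso}MU^{0}=\Z$ is an isomorphism (since $0\le 2\cdot 1$), so I would let $s\in\MGL_{\sst}^{0,1}(\C)$ be the unique element mapping to $1\in MU^{0}$. Its image under the canonical ring map $\MGL^{\sst}\to\MZ^{\sst}$ lies in $\MZ^{\sst,0,1}(\C)=L^{1}H^{0}(\C)$ and recovers the Friedlander-Mazur $s$-element.

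Since $\Qsst$ is lax symmetric monoidal and $\L\RRe_{\C}$ is strong monoidal, the transformation $\Qsst\MGL\to\R\msing_{\C}\L\RRe_{\C}\MGL$ from Theorem~\ref{thm:sstdef}(2), together with $\L\RRe_{\C}\MGL\wkeq MU$, is a map of ring spectra. Assembling the bidegrees gives a graded ring homomorphism
\[
\phi\colon A^{*}:=\bigoplus_{q}\MGL_{\sst}^{2q+*,q}(X)\longrightarrow B^{*}:=\bigoplus_{q}MU^{2q+*}(X(\C)).
\]
The element $s$ has total degree $-2$ in $A^{*}$ (as $\MGL_{\sst}^{0,1}(\C)$ sits as the $q=1$ summand of $A^{-2}$), and $s$-multiplication carries the $q$-indexed summand of $A^{n}$ into the $(q{+}1)$-indexed summand of $A^{n-2}$. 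On the target, $\phi(s)=1\in MU^{0}$, so $s$-multiplication on $B^{*}$ is the pure re-indexing $q\mapsto q+1$, which is plainly invertible; hence $B^{*}[s^{-1}]=B^{*}$.

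The key input is the theorem immediately preceding: for every $n$ and every $q\ge\dim(X)$, the restriction
\[
\phi_{q}\colon\MGL_{\sst}^{2q+n,q}(X)\xrightarrow{\iso}MU^{2q+n}(X(\C))
\]
is an isomorphism. Given $a$ with $q$ below the stable range, multiplying by $s^{k}$ for $k\ge\dim(X)-q$ pushes $a$ into the stable range, where $\phi$ is an iso; conversely any element of $B^{n}$ lifts via $\phi_{q'}^{-1}$ for large $q'$ and then descends as $a'/s^{q'-q}$ in $A^{*}[s^{-1}]$. A standard colimit argument (injectivity via $s$-torsion vanishing after iterates land in the stable range, surjectivity by the lifting just described) then shows $\phi$ induces a ring isomorphism
\[
A^{*}[s^{-1}]\xrightarrow{\iso}B^{*}[s^{-1}]=B^{*}.
\]

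The main obstacle is the compatibility bookkeeping: verifying that the element $s$, defined via the identification of Remark~\ref{rem:fullpt}, indeed maps to $1\in MU^{0}$ and that its multiplicative action on $A^{*}$ intertwines via $\phi$ with the $q$-shift on $B^{*}$. This reduces to the lax/strong monoidality of $\Qsst$ and $\L\RRe_{\C}$ and the naturality of $\id\to\Qsst\to\R\msing_{\C}\L\RRe_{\C}$, but the argument must be unwound carefully because $s$ is defined through the comparison to topological cobordism rather than intrinsically.
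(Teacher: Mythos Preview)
Your proof is correct and follows essentially the same approach as the paper: define $s$ so that it maps to $1\in MU^{0}$, then use the preceding theorem that $\MGL_{\sst}^{p,q}(X)\to MU^{p}(X(\C))$ is an isomorphism for $q\ge\dim(X)$ to conclude via the evident colimit argument. The only cosmetic difference is that the paper defines $s$ by lifting the Friedlander--Mazur element from $L^{1}H^{0}(\C)$ and then checks it maps to $1\in MU^{0}$ via the commutative square, whereas you define $s$ directly as the preimage of $1$ under $\MGL_{\sst}^{0,1}(\C)\xrightarrow{\iso}MU^{0}$; since all four arrows in that square are isomorphisms, the two definitions coincide.
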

\begin{proof}
In \cite[Proposition 5.6]{FW:ratisos} Friedlander-Mazur's $s$-operation in morphic cohomology is reinterpreted as the cup product with  an element $s\in L^1H^0(\C)$. 
Consider the commutative square
$$
\xymatrix{
\MGL_{\sst}^{0,1}(\C) \ar[r]\ar[d]& L^1H^0(\C)\ar[d]\\
 MU^0(\pt) \ar[r]& H^{0}(\pt),
}
$$
in which all arrows are isomorphisms. The element $s\in L^{1}H^{0}(\C)$ lifts to an element $s\in \MGL_{\sst}^{0,1}(\C)$. Since $s\in L^{1}H^{0}(\C)$ maps to $1\in H^{0}(\pt)$, and $1\in MU^{0}(\pt)$ maps to $1\in H^{0}(\pt)$, we see that the same is true for $s\in \MGL_{\sst}^{0,1}(\C)$. We have a map of rings as in the statement of the corollary and the isomorphism follows from the fact that  $MGL^{p,q}_{\sst}(X) \to MU^{p}(X(\C))$ is an isomorphism whenever $q\geq \dim(X)$.
\end{proof}

We now turn to a semi-topological Conner-Floyd isomorphism.  Write $\MGL^{i}_{\sst}(X) = \oplus_{p-2q=i}\MGL^{p,q}_{\sst}(X)$. 
\begin{theorem}
 For any smooth $X$, there is a natural isomorphism 
$$
\MGL_{\sst}^{*}(X)\otimes_{\MGL_{\sst}^{0}(\C)}K^{\sst}_{0}(\C)\iso K^{\sst}_{-*}(X).
$$
\end{theorem}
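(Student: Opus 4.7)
The plan is to mimic the Atiyah-Hirzebruch proof of the classical Conner-Floyd isomorphism: compare multiplicative slice spectral sequences converging to $\MGL_{\sst}^{*,*}(X)$ and $K_{\sst}^{*,*}(X)$, and reduce the statement to the elementary coefficient relation $\L \otimes_\L K_0^{\sst}(\C) \cong K_0^{\sst}(\C)$.

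To construct $\Phi_X$, observe that by Theorem~\ref{thm:sstdef} the functor $\Qsst$ is lax symmetric monoidal with monoidal unit $\id \to \Qsst$. Applying $\Qsst$ to the motivic orientation $\MGL \to \KGL$ thus yields a ring map $\MGL^{\sst} \to \KGL^{\sst}$, making $K^{\sst}_{-*}(X)$ a module over $\MGL_{\sst}^{*}(X)$ compatibly with the coefficient map $\MGL_{\sst}^0(\C) \to K_0^{\sst}(\C)$. This produces the desired comparison $\Phi_X$ as a map of $\MGL_{\sst}^0(\C)$-modules.

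For the spectral sequence comparison, I would build the semi-topological slice spectral sequence for $K$-theory by applying $\Qsst$ to the slice filtration of $\kgl := f_0 \KGL$, whose slices $s_q \kgl \simeq \Sigma_{\P^1}^q \MZ$ are known by Voevodsky-Levine. This yields a strongly convergent spectral sequence with $E_2$-page of the form $L^{b-q}H^{p-q}(X)$. Passing to $\KGL$ by Bott inversion---which commutes with $\Qsst$ since $\Qsst$ preserves filtered colimits (Lemma~\ref{lem:hocolim})---produces a Bott-periodic multiplicative spectral sequence abutting to $K_{\sst}^{*,*}(X)$, with strong convergence established by degree-vanishing estimates on $\ppsh_{p,b}\Qsst(f_q\kgl)(X)$ entirely analogous to those used in the excerpt for $\MGL$. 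Both spectral sequences are multiplicative, by the lax monoidality of $\Qsst$ and Pelaez's compatibility of slices with the orientation, and $\Phi_X$ induces a map of spectral sequences.

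At the $E_2$-page, after tensoring the $\MGL^{\sst}$-side with $K_0^{\sst}(\C)$ over $\MGL_{\sst}^0(\C) = \L$ (using Remark~\ref{rem:fullpt}), the comparison becomes the natural map
\[
\bigl(L^{b-q}H^{p-q}(X) \otimes \L^q\bigr) \otimes_\L K_0^{\sst}(\C) \longrightarrow L^{b-q}H^{p-q}(X) \otimes K_0^{\sst}(\C),
\]
which is an isomorphism because morphic cohomology is flat over $\Z$ and the classical coefficient relation $\L \otimes_\L K_0^{\sst}(\C) \cong K_0^{\sst}(\C)$ is tautological. Strong convergence of both spectral sequences then forces $\Phi_X$ to be an isomorphism. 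The main obstacle will be rigorously establishing the multiplicative semi-topological slice spectral sequence for $K$-theory---in particular its strong convergence after Bott inversion---and verifying that its $\MGL^{\sst}$-module structure agrees with the one induced by $\Phi_X$; some additional care is needed because $\KGL$ is not effective, so one first argues with the effective cover $\kgl$ and then Bott-inverts using that $\Qsst$ commutes with the relevant filtered colimits.
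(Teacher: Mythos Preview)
Your spectral-sequence approach has a genuine gap that is not merely a matter of missing details: the step ``tensor the $\MGL^{\sst}$-side with $K_0^{\sst}(\C)$ over $\MGL_{\sst}^0(\C)=\L$ and compare'' does not work as stated, because $K_0^{\sst}(\C)=\Z$ is \emph{not} flat over the Lazard ring $\L$. Tensoring a convergent spectral sequence with a non-flat module need not produce a spectral sequence at all, and even when it does, the abutment is not the tensor product of the original abutment. Concretely, the object you would need to show is that the spectral sequence obtained from the slice filtration after applying $-\otimes_{\L}\Z$ to each term still converges to $\MGL_{\sst}^{*}(X)\otimes_{\L}\Z$; this is precisely the content of a Conner--Floyd type theorem and cannot be deduced from an $E_2$-page comparison alone. (Your side remark that ``morphic cohomology is flat over $\Z$'' is both irrelevant to this obstruction and false in general, since $L^qH^p(X)$ can have torsion.) A version of your strategy could be salvaged by invoking Landweber exactness of the multiplicative formal group law, but that is a substantial additional input you have not supplied, and it would need to be checked that it applies in the semi-topological setting.

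The paper takes a completely different route, following Gepner--Snaith. One passes to the periodic theory $P\MGL^{\sst}=\bigvee_n\Sigma^{2n,n}\MGL^{\sst}$ and uses the Snaith-type identifications $P\MGL^{\sst}\simeq(\Qsst\Sigma^{\infty}\BGL_+)[\beta^{-1}]$ and $\KGL^{\sst}\simeq(\Qsst\Sigma^{\infty}\BGmp)[\beta^{-1}]$, under which the comparison map is induced by the determinant $\BGL\to\BGm$. This map is split by the inclusion $\BGm\hookrightarrow\BGL$, which immediately gives surjectivity of
\[
P\MGL_{\sst}^{0,0}(W)\otimes_{P\MGL_{\sst}^{0,0}(\C)}\KGL_{\sst}^{0,0}(\C)\longrightarrow \KGL_{\sst}^{0,0}(W)
\]
for compact $W$. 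Injectivity is then a short diagram chase on the kernel $J^0(W)$, using compactness of $W$ to factor any class through a finite Grassmannian and the splitting again. No spectral sequences, no flatness, and no Landweber exactness are needed.
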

\begin{proof}
We show that for any compact motivic space $W$, there is a natural isomorphism
$P\MGL_{\sst}^{0,0}(W)\otimes_{P\MGL_{\sst}^{0,0}(\C)}\KGL_{\sst}^{0,0}(\C)\iso \KGL_{\sst}^{0,0}(W)$, where $P\MGL^{\sst}$ is the periodic semi-topological cobordism spectrum, see Section \ref{sub:cob}. 
The argument is similar to that of \cite[Theorem 5.3]{GepnerSnaith}, so we give the main points and refer to loc.~cit.~ for full details. 

Recall that $P\MGL^{\sst}\wkeq (\Qsst(\Sigma^{\infty}\BGL_{+}))[\beta^{-1}]$ and 
$\KGL^{\sst}\wkeq (\Sigma^{\infty}\BGmp)[\beta^{-1}]$. The map $P\MGL^{\sst}\to \KGL^{\sst}$ is induced under these equivalences by the determinant $\BGL\to \BGm$ which is split by the inclusion $\BGm\to \BGL$. This easily implies that $P\MGL_{\sst}^{0,0}(W)\otimes_{P\MGL_{\sst}^{0,0}(\C)}\KGL_{\sst}^{0,0}(\C)\iso \KGL_{\sst}^{0,0}(W)$ is surjective.

Write $J^{0}(W)=\ker([W,P\MGL_{\sst}]\to [W,\KGL_{\sst}])$. It suffices to show that the map $J^{0}(W)\otimes_{P\MGL_{sst}^{0,0}(\C)}J^{0}(\C)\to J^{0}(W)$ is surjective.
Since $W$ is compact, an element of $x\in J^{0}(W)$ is represented by a map $f_{x}:W\to \Sigma^{-2n,-n}\Qsst\Sigma^{\infty}\BGL_+$ for some $n$. In turn since $\BGL = \colim_{p,q}\Grass_{p,q}$, this element is represented by a map $f:W\to \Qsst(\Sigma^{\infty}(\Grass_{p,q})_+)$. Using this and that $\BGL\to \BGm$ is split, a diagram chase as in \cite[Theorem 5.3]{GepnerSnaith} completes the proof. 
\end{proof}

\section*{Acknowledgements}
We thank David Gepner, Christian Haesemeyer, Jens Hornbostel, Mircea Voineagu, and  Paul Arne {\O}stv{\ae}r for useful and interesting conversations related to various aspects of this work.  Some of the material here is a streamlined and generalized version of parts of the author's thesis, written under the guidance  of Eric Friedlander and we are especially grateful to him for all of his encouragement and help.   
\bibliographystyle{amsalpha} 
\bibliography{sst} 
\end{document}